\newtheorem{theorem}{Theorem}[section]
\theoremstyle{plain}
\newtheorem{lemma}[theorem]{Lemma}
\newtheorem{prop}[theorem]{Proposition}
\theoremstyle{remark}
\numberwithin{equation}{section}
\newcommand{\mc}{\mathcal}
\newcommand{\nn}{\mathbb{N}}
\newcommand{\cc}{\mathbb{C}}
\newcommand{\hh}{\mathbb{H}}
\newcommand{\zz}{\mathbb{Z}}
\newcommand{\la}{\lambda}
\newcommand{\eps}{\epsilon}
\newcommand{\pl}{\partial}
\newcommand{\x}{\times}
\newcommand{\til}{\widetilde}
\newcommand{\bbar}{\overline}
\newcommand{\cjd}{\rangle}
\newcommand{\cjg}{\langle}
\newcommand{\demi}{\frac{1}{2}}
\newcommand{\ndemi}{\frac{n}{2}}
\newcommand{\indic}{\mathbbm{1}}
\newcommand{\cA}{\mathcal{A}}
\newcommand{\cB}{\mathcal{B}}
\newcommand{\re}{\operatorname{Re}}
\newcommand{\im}{\operatorname{Im}}
\newcommand{\res}{\operatorname{Res}}
\newcommand{\rank}{\operatorname{rank}}
\newcommand{\supp}{\operatorname{supp}}
\newcommand{\dist}{\operatorname{dist}}
\newcommand{\norm}[1]{\left\Vert #1 \right\Vert}
\newcommand{\brak}[1]{\langle #1 \rangle}
\newcommand{\abs}[1]{\left| #1 \right|}
\newcommand{\bbR}{\mathbb{R}}
\newcommand{\bbH}{\mathbb{H}}
\newcommand{\bbC}{\mathbb{C}}
\newcommand{\bbZ}{\mathbb{Z}}
\newcommand{\bbN}{\mathbb{N}}
\newcommand{\calR}{\mathcal{R}}
\newcommand{\calJ}{\mathcal{J}}
\newcommand{\cinf}{C^\infty}
\newcommand{\del}{\partial}
\newcommand{\vep}{\varepsilon}
\newcommand{\phvert}{\phi^{\rm v}}
\newcommand{\phhor}{\phi^{\rm h}}
\newcommand{\Hyp}{\bbH^{n+1}}
\begin{document}
\title[Resonances on geometrically finite hyperbolic manifolds]{Upper bounds for the number of resonances on geometrically finite hyperbolic manifolds}
\date{Aptil 17, 2013}
\author[Borthwick]{David Borthwick}
\address{Department of Mathematics and Computer Science, Emory
University, Atlanta, Georgia, 30322, USA}
\thanks{DB supported in part by NSF\ grant DMS-0901937.}
\email{davidb@mathcs.emory.edu}
\author[Guillarmou]{Colin Guillarmou}
\address{DMA, U.M.R. 8553 CNRS\\
Ecole Normale Sup\'erieure\\
45 rue d'Ulm\\ 
F 75230 Paris cedex 05 \\France}
\thanks{CG supported in part by grant ANR-09-JCJC-0099-01}
\email{cguillar@dma.ens.fr}
\subjclass[2010]{Primary 58J50, Secondary 35P25}

\begin{abstract}
On geometrically finite hyperbolic manifolds $\Gamma\backslash\hh^{d}$, including those with non-maximal rank cusps, we give upper bounds on the number $N(R)$ of resonances of the Laplacian in disks of size $R$ as $R\to \infty$. In particular, if the parabolic subgroups of $\Gamma$ satisfy a certain Diophantine condition, the bound is $N(R)=\mc{O}(R^d (\log R)^{d+1})$.
\end{abstract}

\maketitle

\section{Introduction}

Let $X=\Gamma\backslash\hh^{d}$ be a geometrically finite hyperbolic manifold in the sense of Bowditch \cite{Bowditch:1993}, with the dimension written henceforth as $d=n+1$. We assume that $X$ has infinite volume, in which case the Laplacian $\Delta_X$ on $L^2$ has essential spectrum $[n^2/4,\infty)$, and
a finite number of eigenvalues in $(0,n^2/4]$, as shown by Lax-Phillips \cite{LP:1982}.
The resolvent of $\Delta_X$, 
\[ R_X(s):=(\Delta_X-s(n-s))^{-1},\]
is well defined in $\re s>n/2$, provided $s(n-s)$ is not in the discrete spectrum. It was proved recently by Guillarmou-Mazzeo \cite{GM:2012} that  $R_X(s)$  admits a meromorphic extension to $s\in \cc$ as an 
operator from $L^2_{{\rm comp}}(X)$ to $L_{\rm loc}^2(X)$, with the polar part of the Laurent expansion at any pole 
having finite rank.  (A corresponding result was proved by Bunke-Olbrich \cite{BO:1999b} for the scattering matrix). This continuation was shown previously by Mazzeo-Melrose \cite{MM:1987}
when $X$ has no cusps (i.e.~when the group $\Gamma$ is convex co-compact), by Guillop\'e-Zworski \cite{GZ:1995a} in dimension $2$,
and by Froese-Hislop-Perry \cite{FHP:1991b} in dimension $3$.

The \emph{resonances} of $X$ are the poles of $R_X(s)$ and we denote by $\mc{R}_X$ the set of resonances counted with their multiplicities,  
\[m(s_0):= \rank \res_{s_0}R_X(s),\]  
where ${\rm Res}$ denotes the residue.
Resonances appear naturally in relations with Selberg zeta function and trace formula: in the case of a convex co-compact group $\Gamma$ acting on $\hh^{n+1}$, Patterson-Perry \cite{PP:2001} and Bunke-Olbrich \cite{BO:1999} have shown that the zeros of the meromorphic extension of Selberg zeta function $Z_X(s)$ are given by resonances, modulo topological zeros at negative integers. Direct applications to Selberg type trace formulas can be deduced, see \cite{Perry:2003, GN:2006}. In dimension $2$, building on the work of Guillop\'e-Zworski \cite{GZ:1997}, Borthwick-Judge-Perry \cite{BJP:2005} described the zeros and poles of Selberg function for any geometrically finite hyperbolic surface in terms of resonances.

For all these applications, it was important to know the distribution of resonances in the left half-plane $\re s<n/2$, and in particular upper bounds on their counting function,
\[N_X(R):=\# \left\{ s\in \mc{R}_X;\> |s-n/2|\leq R\right\}.\]
Such growth estimates are in particular crucial for trace formulas and their applications to the counting function of closed geodesics on $X$, see \cite{GZ:1999, GN:2006}. It was shown by Patterson-Perry \cite{PP:2001}, using estimates of Fried on the growth of Selberg zeta function, that 
\[N_X(R)=\mc{O}(R^{n+1})\] 
for convex co-compact $X=\Gamma\backslash\hh^{n+1}$. More generally, 
Borthwick \cite{Borthwick:2008} generalized this estimate to manifolds which are conformally compact with constant curvature near infinity, by directly analyzing   
the parametrix construction of the resolvent by Guillop\'e-Zworski \cite{GZ:1995b} and 
using crucial estimates of Cuevas-Vodev \cite{CV:2003}. In dimension $2$, Guillop\'e-Zworski \cite{GZ:1995a}  
proved $N_X(R)=\mc{O}(R^2)$ for compact perturbations of geometrically finite hyperbolic surfaces. These estimates are sharp as there are also lower bounds of the same order, see \cite{GZ:1999,Perry:2003, Borthwick:2008,BCHP}.
For compact perturbations of Euclidean Laplacian in dimension $d$, sharp upper bounds for resonances of 
the form $N(R)=\mc{O}(R^{d})$ were proved by Zworski \cite{Zworski:1989b}, 
Vodev \cite{Vodev:1991}, Sj\"ostrand-Zworski \cite{SZ:1991}.
 
\bigbreak
In order to state our result, we need to introduce a quantity related to the holonomy of the cusps. Let $X=\Gamma\backslash \hh^{n+1}$ be a geometrically finite hyperbolic manifold, which we assume smooth and oriented (which can always be achieved by passing to a finite cover). Each cusp of rank $k\in[1,n]$ of $X$ can be viewed 
as an open set  in a model quotient manifold $\Gamma^c\backslash\hh^{n+1}$, where 
$\hh^{n+1}$ is represented as a half-space $\bbR^+\x \bbR^n$, and $\Gamma^c$ is an elementary 
parabolic subgroup of ${\rm SO}(n+1,1)$, which is conjugate to a subgroup of $\Gamma$ fixing a point $p$ in the boundary 
$S^n=\pl\hh^{n+1}$  of hyperbolic space. After passing to a finite cover, the group $\Gamma^c$ can be assumed to be abelian and generated by $k$ elements $\gamma_1,\dots,\gamma_k$ of the form
\[ \gamma_j(x,y,z)=(x, A_jy, z+v_j), \quad A_j\in {\rm SO}(n-k), \,\, v_j\in\bbR^k\]
on a certain decomposition $\bbR^+_x\x\bbR_y^{n-k}\x \bbR_z^k$ of the half-space model  of $\hh^{n+1}$.
Let us denote by $\{v_1^*,\dots, v_k^*\}$ a basis for the lattice $\Lambda^*$ dual to  
$\Lambda=\{\sum_{j=1}^ka_jv_j; a_i\in\zz\}$. The group $\Gamma^c$ acts by Euclidean isometries on the horosphere 
$\{x=1\}\simeq \bbR^{n-k}_y\x \bbR_z^k$, and the quotient is a flat vector bundle $F$. The associated spherical bundle 
$SF=\Gamma^c\backslash (S^{n-k-1}\x \bbR^k)$ is well defined because $A_j\in {\rm SO}(n-k)$, and there is a unitary representation $\sigma: \Gamma^c\to {O}(H_m)$ given by $\sigma(\gamma_j)f:=f\circ A_j^{-1}$, where 
$H_m$ is the space of spherical harmonics of degree $m\in\nn_0$ and $O(H_m)$ the orthogonal group of the Hermitian vector space $H_m$ equipped with the $L^2(S^{n-k-1})$ scalar product. Denote by $e^{i\alpha_{mpj}}$ 
(for $p=1,\dots,\mu_m$) the $\mu_m$ eigenvalues of $\sigma(\gamma_j)$ with $\alpha_{mpj}\in[0,2\pi)$, we call these 
$\alpha_{mpj}$ \emph{holonomy angles} of the cusp.  For each $I=(m,p,v^*)\in \nn\x \nn\x \Lambda^*$ with $p\leq \mu_m$, define 
\[
b_I := \biggl| \sum_{j=1}^{k} \alpha_{mpj}v_j^* + 2\pi v^*\biggr|.
\]
Our estimate on the counting function $N_X(R)$ of resonances for $X$ is related to how close 
$b_I$ can approach $0$ when $I=(m,p,v^*)$ satisfies $m\leq R$ but $b_I\not=0$. 
More precisely, we define $\mc{I}_>:=\{I=(m,p,v^*);\> b_I\not=0\}$ and the function
\begin{equation}\label{Lambda.def0}
\Lambda_{\Gamma^c}(u) := 2 \brak{u} \log \brak{u} + 
\sup\limits_{I\in \mc{I}_>, \> 1\le m \le \abs{u}}  \left[ 2(\abs{u}-m) \log \frac{1}{b_I} - 2m \log m \right] .
\end{equation}
We say that $\Gamma^c$ satisfies the \emph{Diophantine condition} if  for some $c>0$, 
$\gamma\ge 0$, 
\[
b_I > cm^{-\gamma},\>\text{ for }I \in \mc{I}_>.
\]
Under this condition, $\Lambda_{\Gamma^c}(u)=\mc{O}(\cjg u\cjd \log\cjg u\cjd)$. We are able to prove:
\begin{theorem}\label{maintheorem}
Let $X=\Gamma\backslash \hh^{n+1}$ be a geometrically finite hyperbolic manifold with $n_c$ cusps and let $\mc{R}_X$ be the set of resonances with multiplicity for its Laplacian. Let $\Gamma^c_1,\dots,\Gamma^c_{n_c}$ 
be $n_c$ parabolic subgroups associated to each cusp and define the function 
$\Lambda_{X}(u):=\max_{j\leq n_c}\Lambda_{\Gamma^c_j}(u)$ using \eqref{Lambda.def0}. 
Then there exists $C$ such that for all $R>1$,
\[ N_X(R)\leq \frac{C (\Lambda_X(2R))^{n+2}}{R}.\] 
In particular, if the cusps all satisfy the Diophantine condition, 
\[ N_X(R)\leq CR^{n+1} (\log R)^{n+2}.\]
\end{theorem}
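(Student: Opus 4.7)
The overall strategy follows the parametrix-and-Jensen scheme pioneered by Guillop\'e-Zworski and extended to conformally compact geometries by Borthwick, with the main new work going into the cusp analysis. The first step is to build a parametrix $\mc{M}(s)$ for $R_X(s)$ by decomposing $X$ into a compact core, finitely many funnel ends, and finitely many cusp ends, and patching known resolvents with cutoff functions: the free resolvent of $\hh^{n+1}$ on the interior, the conformally compact parametrix on each funnel, and the full model cusp resolvent $R_{\Gamma_j^c}(s)$ on $\Gamma_j^c \backslash \hh^{n+1}$ on each cusp end. One writes $(\Delta_X - s(n-s))\mc{M}(s) = \mathrm{Id} - K(s)$, with $K(s)$ supported in the overlap regions where cutoffs meet their derivatives, so that $R_X(s) = \mc{M}(s)(\mathrm{Id} - K(s))^{-1}$.

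The second step is to verify that $K(s)$ lies in a Schatten ideal $\mc{C}^{p}$ for some $p>d$ and depends meromorphically on $s\in\bbC$, so that a regularized determinant $D(s) := \det\nolimits_p(\mathrm{Id} - K(s))$ may be formed. After dividing out trivial poles coming from the known singularities of $\mc{M}$, the function $D$ extends to an entire function whose zeros control the resonance set with multiplicity, $m(s_0) \le \ord_{s_0} D + O(1)$; thus $N_X(R)$ is bounded by the number of zeros of $D$ in the disk $\abs{s - n/2} \le R$.

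The third and crucial step, which is the main obstacle, is to bound $\log\abs{D(s)}$ on the circle $\abs{s - n/2} = 2R$. On the funnel ends, trace-norm growth of order $\brak{s}^{n+1}\log\brak{s}$ is given by the Cuevas-Vodev estimates already employed in the convex co-compact case. The new ingredient is the cusp contribution: one Fourier decomposes $R_{\Gamma_j^c}(s)$ using the spherical harmonic representation $\sigma$ introduced in the statement. The decomposition is indexed by $I = (m,p,v^*)$ and yields one-dimensional operators whose Schwartz kernels involve modified Bessel functions of the form $I_\nu(b_I x)K_\nu(b_I x')$, with order $\nu$ depending linearly on $s$ and $m$. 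Combining uniform asymptotics for Bessel functions in both order and argument, and splitting the $I$-sum into low-mode, resonant near-diagonal, and high-mode ranges, one obtains
\[
\log\abs{D(s)} \le C\, \Lambda_X(\abs{s})^{n+2}, \quad \abs{s - n/2} = 2R.
\]
The function $\Lambda_X$ in \eqref{Lambda.def0} is precisely calibrated so that generic modes contribute the term $2\brak{u}\log\brak{u}$, while modes with small denominators $b_I$ are absorbed by $2(\abs{u}-m)\log b_I^{-1}$, cut off by the compensating $-2m\log m$.

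Finally, applying Jensen's formula to $D$ on the disk $\abs{s-n/2} \le 2R$, using a reference point $s_0$ to the right of the spectrum where $D(s_0) \ne 0$, yields
\[
N_X(R) \le \frac{C\,\Lambda_X(2R)^{n+2}}{R}.
\]
Under the Diophantine hypothesis $b_I > c m^{-\gamma}$, a direct calculation gives $2(\abs{u}-m)\log b_I^{-1} - 2m\log m \le 2\gamma \abs{u}\log\abs{u} + O(\abs{u})$, so $\Lambda_X(u) = \mc{O}(\brak{u}\log\brak{u})$ and the stated polynomial-times-polylog bound follows. The heart of the argument lies in Step~3: the rotational holonomy $A_j$ of a non-maximal-rank cusp forces the arithmetic quantities $b_I$ into the spectral decomposition, and these may accumulate near zero; the definition of $\Lambda_X$ is the accounting device that converts this possible accumulation into a clean counting estimate.
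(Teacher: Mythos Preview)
Your outline captures the overall architecture correctly, but the accounting in Steps~3--4 is off in a way that matters for the final exponent.  Two related issues:

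\medskip
\textbf{(i) The determinant growth estimate.}  The paper does \emph{not} prove a bound of the form $\log|D(s)|\le C\,\Lambda_X(|s|)^{n+2}$ on a circle of radius $2R$.  What is actually established (Proposition~\ref{hN.growth}) is that the $N$-dependent determinant $h_N(s)$ satisfies $\log|h_N(s)|\le CN^{n+1}$, with exponent $n+1$, but only in the restricted region $U_N\cap\{\Lambda_X([\re s]_-)\le N\}$.  The factor $e^{\Lambda_{\Gamma^c}([\re s]_-)}$ in the cusp resolvent estimate (Proposition~\ref{RXc.estimate}) is absorbed by \emph{imposing} $\Lambda_X([\re s]_-)\le N$, not by letting it propagate into the determinant bound as a power of $\Lambda_X$.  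The exponent $n+1$ comes from the dimensional-reduction trick (passing to a step function $\phvert_\delta\to H_\delta$ so that the dominant error lives on an $n$-dimensional hypersurface $\Sigma$), which your sketch does not mention.

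\medskip
\textbf{(ii) The Jensen step.}  A standard Jensen application on the disc $|s-n/2|\le 2R$ with reference point $s_0$ near $n/2$ gives $N_X(R)\lesssim \max_{|s-n/2|=2R}\log|D(s)|$, with no division by $R$.  With your claimed bound this would yield only $N_X(R)\le C\Lambda_X(2R)^{n+2}$, which under the Diophantine condition is $\mc O(R^{n+2}(\log R)^{n+2})$, one power of $R$ worse than the theorem.  The paper instead chooses $N=[\Lambda_X(2T)]$, centers the Jensen disc at $s_N=aN$ far to the right, and takes radius $s_N+2T$ so that the disc lies entirely inside the region where the $e^{CN^{n+1}}$ bound is valid.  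The resonance disc $|s-n/2|\le T$ then sits off-center, and the extraction step
\[
\mc N(h_N;s_N,s_N+T)\le \frac{s_N+2T}{T}\int_{s_N+T}^{s_N+2T}\frac{\mc N(h_N;s_N,t)}{t}\,dt
\]
produces the extra factor $s_N/T\sim N/T$, turning $N^{n+1}$ into $N^{n+2}/T$.  This off-center geometry is where both the exponent $n+2$ and the division by $R$ actually originate; your version of Jensen cannot produce them.
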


The Diophantine condition is obviously satisfied when the holonomy angles $\alpha_{mpj}/2\pi$ are rational, and it continues to hold when these angles are algebraic.  In \S \ref{Dioph.sec}, we give examples with transcendental angles where $\Lambda_{X}(R)$ can grow arbitrarily fast.  In such cases the estimate of Theorem~\ref{maintheorem} is not very good, but it is not clear that it could be improved. The growth in the estimate comes from the fact that 
the model resolvent for $\Gamma^c\backslash \hh^{n+1}$ has very large norm when $b_I\to 0$ fast as $m\to \infty$, if $\Gamma^c$ is a parabolic subgroup with $\Lambda_{\Gamma^c}(R)$ large - see Proposition \ref{RXc.estimate}.  Strangely enough, the model space
$\Gamma^c\backslash \hh^{n+1}$ has only $\mc{O}(R^{n-k+1})$ resonances in a ball of radius $R$.  If we were to perturb this model case inside a compact set, it is conceivable that a large number of resonances would appear due to the large norm of the model resolvent. 

The proof of Theorem~\ref{maintheorem} is based on a precise parametrix construction, inspired in parts by techniques of \cite{GZ:1995b,CV:2003} and 
the work of \cite{GM:2012}.  As usual with such methods, we could allow $X$ to be a smooth, compactly supported metric perturbation of $\Gamma\backslash \hh^{n+1}$, or a more generally a manifold with neighbourhoods of infinity isometric to model neighbourhoods from a geometrically finite hyperbolic quotient.  These generalizations would not change the proof, but we restrict our attention to $\Gamma\backslash \hh^{n+1}$ for simplicity of exposition.

We note that our method gives an alternate, simplified proof of the sharp estimate $N_X(R)=\mc{O}(R^{n+1})$ for conformally compact manifolds with constant curvature near infinity, first proved entirely in \cite{Borthwick:2008}.  In particular, we are able to understand more precisely than in \cite{GZ:1995b,CV:2003} the multiplicity of poles in the model terms used for the parametrix.  This issue was a crucial reason why the bound in \cite{GZ:1995b} was not optimal, and also the reason for the omission of a sector containing the negative real axis from the resonance count in \cite{CV:2003}.  In the proof for the conformally compact case in \cite{Borthwick:2008}, the model-term multiplicity issue was bypassed by counting resonances in the missing sector as zeros of a regularized determinant of the scattering matrix.  This isn't possible in the case of non-maximal rank cusps, because the scattering matrix, whose existence was demonstrated in \cite{GM:2012}, naturally acts on an ideal boundary manifold that is not compact.  It therefore proves very difficult to produce a regularized scattering determinant whose zeros still correspond to resonances.  Improved control of multiplicities in the model terms of the parametrix construction is thus an essential feature of our argument.

The bound $N_X(R)=\mc{O}(R^{n+1})$ extends also to the case of geometrically hyperbolic manifolds whose cusps all have maximal rank.
(This hasn't been formally written down for $n\ge 2$, but the methods of \cite{GZ:1995a} clearly generalize to maximal rank cusps in higher dimensions.) 
For cases with cusps of non-maximal rank, even assuming the Diophantine condition or the rational  assumption on the holonomy angles $\alpha_{mpj}/2\pi$, it is not clear if the $R^{n+1}(\log R)^{n+2}$ bound could be improved to $\mc{O}(R^{n+1})$. In principle our bound should imply upper bounds on the number of zeros of Selberg zeta function in that setting, 
but the meromorphic extension of this function and the analysis of its zeros are not done yet. It would be interesting to see, by Fried's method using transfer operators, if better estimates can be obtained on the growth of Selberg function in the complex plane, provided it has meromorphic extension. 

The methods developed here also yield a bound for the resonance counting function in a vertical strip.  The resulting estimate,
\[
\#\bigl\{s \in \mc{R}_X;\> \re s > n/2-K,\>0 \le \im s \le T  \bigr\} \le C_K T^{n+2}
\]
(see Proposition~\ref{Nstrip.prop}), is not sharp.  It does, however, show that we can control the resonance count in strips without reference to the Diophantine approximation problem that affects the global estimate.  

The paper is organized as follows.  In \S\ref{setup.sec} we review the structure of model neighbourhoods of infinity for a geometrically finite quotient and cover some technical preliminaries.   The parametrix constructions for the model neighbourhoods of infinity are presented for the regular case in \S\ref{par.reg.sec} and for the cusp case in \S\ref{par.cusp.sec}, and precise estimates for the singular values of these parametrix terms are developed.  In \S\ref{gl.par.sec} we apply these singular value estimates to produce the determinant growth estimate that leads to the proof of Theorem~\ref{maintheorem}.  Some technical details of the parametrix construction and special function estimates are relegated to the Appendix.

\bigbreak
\section{Geometry and setup}\label{setup.sec}

Generally we will use the upper half-plane model of the hyperbolic space $\hh^{n+1}$, given by $\bbR_x^+\x\bbR_y^{n}$ with the metric 
\[
g_0 = \frac{dx^2+|dy|^2}{x^2}.
\] 
However, it is sometimes useful to view $\hh^{n+1}$ as the interior of a compact manifold with boundary, namely the closed unit ball $\{m\in\bbR^{n+1}; |m|\leq 1\}$, equipped with the complete metric  $g_{0}=4|dm|^2/(1-|m|^2)^2$. 
We let $\bbar{\hh}^{n+1}$ denote the closed unit ball compactification of $\hh^{n+1}$ defined by this identification.

Let $X=\Gamma\backslash \hh^{n+1}$ be a geometrically finite quotient. 
We follow the geometric description of Mazzeo-Phillips \cite{MP:1990}  and Guillarmou-Mazzeo \cite[\S2]{GM:2012}; see also Bowditch \cite{Bowditch:1993}.
After passing to a finite cover (which does not affect resonance 
counting), $X$ can be covered 
by three types of open sets: the cusp neighbourhoods 
$\{\mc{U}_j\}_{j\in J^c}$, the regular neighbourhoods $\{\mc{U}_j\}_{j\in J^r}$, 
and a relatively compact interior set $\mc{U}_0$.  Here $J^c\cup J^r\subset \nn$ are index sets with $J^r\cap J^c=\emptyset$. 
The cusp neighbourhoods can be taken disjoint from each other, by adding regular open sets if necessary, so that we assume $\mc{U}_j\cap \mc{U}_{\ell}=\emptyset$ if $(j,\ell)\in J^c\x J^c, j\not=\ell$.
Each regular neighbourhood $\mc{U}_j$ with $j\in J^r$ is isometric to a half-ball in the half-space model 
\begin{equation}\label{halfball} 
\mc{U}_j\simeq B_0:= \left\{(x,y)\in \hh^{n+1} ; x^2+|y|^2< 1\right\}, \textrm{ with metric }Ê\frac{dx^2+dy^2}{x^2}.
\end{equation}
Each cusp neighbourhood $\mc{U}_j$ with $j\in J^c$ is isometric to a subset 
\begin{equation}\label{cuspmodel}
\mc{U}_j\simeq \Gamma^c_j\backslash \left\{ (x,y,z)\in\hh^{n+1};  x^2+|y|^2>R_j \right\},Ê\textrm{ with metric }\frac{dx^2+dy^2+dz^2}{x^2} 
\end{equation}
for some $R_j>0$,  where $\Gamma_j^c$ is an abelian elementary parabolic group with rank $k_j\in[1,n]$ 
fixing $\infty$ in the half-space model of $\hh^{n+1}$ and acting as translations in the $z\in \bbR^{k_j}$ variable.  More precisely, 
$\Gamma_j^c$ is the abelian group generated by some $\gamma_1,\dots, \gamma_{k_j}$ which acts on $\hh^{n+1}=\bbR^+\x \bbR^{n-k_j}\x \bbR^{k_j}$ by
\begin{equation}\label{gen.form}
 \gamma_\ell(x,y,z)= (x, A_{\gamma_\ell}y, z+v_{\gamma_\ell})
\end{equation}
for some $v_{\gamma_\ell}\in \bbR^{k_j}$ and some commuting family $A_{\gamma_\ell}\in SO(n-k_j)$, $\ell=1,\dots,k_j$.

The cusp neighbourhood $\mc{U}_j$, $j\in J^c$, can also be viewed as the region $\{x^2+|y|^2>R_j\}$ in the quotient
$\Gamma_j^c\backslash \hh^{n+1}$, where the functions $x,|y|^2$ on $\hh^{n+1}$ descend 
to smooth functions on the quotient. There is an isometry,
\[ \Gamma_j^c \backslash \hh^{n+1}\simeq \Big(\bbR^+_x\x F_j ,  \frac{dx^2+ g_{F_j}}{x^2}\Big), \]
where $F_j$ is a flat vector bundle over a flat $k_j$-dimensional torus obtained by the quotient
$F_j:= \Gamma_j^c\backslash \bbR^{n}$, with action $\gamma_\ell(y,z)=(A_{\gamma_\ell}y, z+v_{\gamma_\ell})$ 
induced from the action of $\gamma_\ell$ on the horospheres $x={\rm const}$.  Since 
the action is by Euclidean isometry the quotient $F_j$ inherits a flat metric $g_{F_j}$. 

We will also use the space $\bbar{X}:=\Gamma\backslash (\bbar{\hh}^{n+1}\setminus \Lambda(\Gamma))$
where $\Lambda(\Gamma)\subset S^n=\pl\bbar{\hh}^{n+1}$ is the limit set of the group $\Gamma$. The space 
$\bbar{X}$ is a smooth manifold with boundary $\pl\bbar{X}$, and this boundary is given locally by $\{x=0\}$ 
in the charts $\mc{U}_j$ for $j\in J^c\cup J^r$.  In the conformally compact case, $\bbar{X}$ would be the conformal compactification of $X$, but in cases with non-maximal rank cusps, neither $\bbar{X}$ nor $\pl\bbar{X}$ are compact. \\ 

For simplicity of notation,  in what follows we will identify the neighbourhoods $\mc{U}_j$ for $j\in J^c,J^r$ with the respective models \eqref{halfball} and \eqref{cuspmodel}. We will also assume that there is no cusp of maximal rank 
$n$, as  their analysis is now standard, see for instance \cite{GZ:1995a} in dimension $2$.\\

\subsection{Notations and parameters}

There will be several parameters in the construction: $s\in \cc$ is the spectral parameter which we sometimes
change to $\la=s-n/2$ to simplify certain notations; $N$ is a large parameter indexing the parametrix construction, such that the parametrix yields continuation of the resolvent to $\re s>n/2-N$;
$\delta>0$ is a small parameter, independent of $s,N$, 
used to localize certain parametrix terms in small neighbourhoods of infinity, in a way similar to \cite{GZ:1995b}. 
The need for this localization will become clear in Propostions~\ref{GZ.prop}.  Certain error terms which would otherwise be exponentially large can be bounded for $\delta$ sufficiently small. This parameter does not appear for instance 
in \cite{GM:2012}, and is only used here for the singular value estimates of remainders in the parametrix constructions, which are of course the core of the bound on the resonances counting function. 

We use $C$ to denote a generic positive constant whose value can change from line to line, and which does not depend on the parameters $s,N,\delta$. Finally $\nn$ denotes the set of positive integers and $\nn_0=\nn\cup\{0\}.$

\subsection{Weight function depending on $\delta$.} 
For technical reasons we also need a parameter-dependent weight function, denoted
$\rho\in L^\infty(\bbar{X})$ such that $0<\rho\le 1$ in $X$.  Near the boundary we want $\rho$ to be comparable to the variable $x$ under each 
identification of a boundary neighbourhood with the models \eqref{halfball} and \eqref{cuspmodel}.  But for the small parameter
$\delta>0$ described above, we want to have $\rho = 1$ for $x \ge \delta$ in each boundary neighbourhood.   The purpose of choosing the weight function this way is to prevent the localization of boundary parametrix terms described above from adversely affecting estimates of the interior parametrix term.

Because the boundary neighbourhoods overlap, and each has a different $x$ coordinate, we need to demonstrate the existence of a function with the desired properties.
Let $(\mc{U}_j)_{j\in J^r\cup J^c}$ be a finite covering of a neighbourhood of $\pl\bbar{X}$ as described above.
For $j\in J^{r}\cup J^c$, denote by $x_j$ the pull-back of the function $x$ to $\mc{U}_j$ through the isometry 
$\psi_j$ identifying $\mc{U}_j$ with $B_0$ or
$\Gamma^c_j\backslash \left\{ (x,y,z)\in\hh^{n+1};  x^2+|y|^2>R_j \right\}$, as described in \eqref{halfball} and \eqref{cuspmodel}. 

\begin{lemma}\label{constrho} 
There exists a positive function $\rho\in L^\infty(X)$ with $\rho \le 1$ everywhere and a constant $C$ independent of $\delta$ such that
for all $i\in J^{r}\cup J^c$,
\begin{equation}\label{condrho}
\begin{cases}x_i\leq \rho\leq \frac{C}{\delta}x_i &\textrm{in }\mc{U}_i\cap\{x_i\leq \delta\},\\
\rho =1&\textrm{in }\mc{U}_i\cap\{x_i\geq \delta\}.\end{cases}
\end{equation}
\end{lemma}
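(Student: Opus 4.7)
The plan is to patch the local boundary-defining functions $x_j$ into one global $\rho$ via a truncated minimum. The underlying geometric input is that on each nonempty overlap $\mc{U}_i\cap\mc{U}_j$, both $x_i$ and $x_j$ are smooth boundary-defining functions for $\pl\bbar{X}$ in $\bbar{X}$, so their ratios $x_i/x_j$ extend as smooth positive functions up to the boundary. Since the cover $\{\mc{U}_j\}_{j\in J^r\cup J^c}$ is finite, this gives a uniform constant $c\ge 1$, independent of $\delta$, with
\[
c^{-1} x_i \le x_j \le c\, x_i \qquad\text{on }\mc{U}_i\cap\mc{U}_j.
\]

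First I would extend each $x_j$ to a bounded measurable function on $X$ by setting $\til{x}_j := x_j$ on $\mc{U}_j$ and $\til{x}_j := 1$ on $X\setminus\mc{U}_j$; only an $L^\infty$ bound is required, so no smoothing is necessary. Then, under the harmless assumption $\delta \le 1/c$ (since $\delta$ is a small parameter), I would define
\[
\rho(p) := \min\Bigl(1,\; c\delta^{-1}\min_{j\in J^r\cup J^c}\til{x}_j(p)\Bigr).
\]

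To verify \eqref{condrho} at $p\in\mc{U}_i$ I would split into two cases. If $x_i(p)\ge\delta$, then for every $j$ with $p\in\mc{U}_j$ overlap comparability gives $\til{x}_j(p)\ge x_i/c\ge\delta/c$, while for $j$ with $p\notin\mc{U}_j$ we have $\til{x}_j=1\ge\delta/c$; hence the inner minimum is $\ge\delta/c$ and $\rho(p)=1$. If instead $x_i(p)<\delta$, the inclusion of $\til{x}_i=x_i$ in the minimum gives the upper bound $\min_j\til{x}_j(p)\le x_i$, while the lower bound $\min_j\til{x}_j(p)\ge x_i/c$ follows from overlap comparability together with the observation that the off-chart value $1$ dominates $x_i/c$ whenever $x_i\le 1$. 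Consequently $c\delta^{-1}\min_j\til{x}_j(p)\in[x_i/\delta,\,c x_i/\delta]$, and taking the outer minimum with $1$ either leaves this quantity unchanged or forces $\rho(p)=1$, the latter only when $x_i\ge\delta/c$. In every subcase the bounds $x_i\le\rho\le Cx_i/\delta$ read off directly with $C:=c^2$ (the lower bound uses $\delta\le 1$; the upper bound in the truncated subcase uses $x_i\ge\delta/c$).

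There is no genuine obstacle, and the only delicate bookkeeping is the scaling inside the outer $\min(1,\cdot)$: the factor $c$ in $c\delta^{-1}$ is chosen precisely so that when the truncation at $1$ activates, the point is already at distance $\gtrsim\delta$ from the boundary in every overlapping chart, which prevents $C$ from absorbing any power of $\delta$.
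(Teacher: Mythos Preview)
Your argument is correct. The construction, however, is genuinely different from the paper's. You build $\rho$ as a truncated minimum
\[
\rho = \min\Bigl(1,\; c\delta^{-1}\min_{j}\til{x}_j\Bigr),
\]
whereas the paper takes a partition of unity $1=\chi_0+\sum_j\chi_j$ subordinate to the cover and sets
\[
\rho = \chi_0 + \sum_{j}\bigl(Mx_j\,\indic_{x_j\le\delta/M} + \indic_{x_j>\delta/M}\bigr)\chi_j,
\]
i.e.\ a convex combination of locally truncated and rescaled defining functions. Both rest on the same geometric input (the uniform comparability $x_i\asymp x_j$ on overlaps, with constant independent of $\delta$), and both yield only an $L^\infty$ function, which is all the lemma requires. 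Your min-based construction is a bit more elementary since it avoids the partition of unity altogether; the paper's averaging approach would be the more natural starting point if one later wanted to smooth $\rho$, but that is not needed here. A minor remark: your bookkeeping actually gives the bounds with $C=c$ rather than $C=c^2$, though of course the latter is harmless.
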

\begin{proof}
First, there is a constant $M\geq 1$ such that for all $i,j\in J:=J^c\cup J^r$, 
\begin{equation}\label{xMMx}
\frac{x_j}{M} \leq x_{i}\leq Mx_j,  \quad \textrm{Êin }\mc{U}_i\cap \mc{U}_j.
\end{equation}
Taking $\delta$ small enough, we can always assume that $\delta M^2\leq 1$.
We introduce a partition of unity $1=\chi_0+\sum_{j=1}^J \chi_j$ with $1-\chi_0=1$ in a region containing 
$\cup_{j\in J}\{x_j\leq 1/4\}$ and $\chi_j$ supported in $\mc{U}_j$, and then set
\[\rho:= \chi_0+ \sum_{j=1}^J (Mx_j \indic_{x_j\leq \delta/M}+ \indic_{x_j>\delta/M}) \chi_j .\]
Clearly $\rho\leq 1$.  From \eqref{xMMx} we see that in $\mc{U}_i\cap \mc{U}_j$,
\[
\frac{1}{x_i}\indic_{x_j>\delta/M} \le \frac{M}{x_j}\indic_{x_j>\delta/M} \le M^2/\delta.
\]
It follows easily that $\rho/x_i \le M^2/\delta$ in $\mc{U}_i$.  The gives in particular the upper bound in the first line of \eqref{condrho}.

In $\mc{U}_i$, if $x_i\geq \delta$, then  in any $\mc{U}_i \cap \mc{U}_j$ we also have $x_j\geq \delta/M$.  It follows that $\rho = 1$ in $\mc{U}_i \cap\{x_i \ge \delta\}$.
On the other hand, if $x_i\leq \delta$, then we also
we have that $M x_j\leq M^2x_i\leq M^2\delta\leq 1$.  Thus, in $\mc{U}_i \cap \{ x_i\le\delta\}$,
\[\rho\geq \chi_0+\sum_{j}Mx_j\chi_j \geq \chi_0+x_i \sum_{j}\chi_j\geq x_i.\]
This completes the proof.\end{proof}

\bigbreak
\section{Parametrix construction for regular neighbourhoods}\label{par.reg.sec}

To construct the resolvent on a geometrically finite hyperbolic manifold $X$, we will construct a parametrix using the fact that for most points $p$ near infinity of $X$, the geometry of $X$ near $p$ is isometric to $B_0$. 
For this reason, we begin with the construction of the parametrix in a model neighbourhood of the regular type \eqref{halfball}, i.e. a half-ball $B_0$ in $\hh^{n+1}$.  This case was of course considered in Guillop\'e-Zworski \cite{GZ:1995b}, but as noted in the remarks following the statement of Theorem~\ref{maintheorem}, we need a modified construction that allows more precise control of the multiplicities appearing in the model terms.

\subsection{Resolvent on the covering space $\hh^{n+1}$}

The Laplacian $\Delta_{g_0}$ in the upper half-space model is 
\[\Delta_{g_0}=-(x\pl_x)^2+nx\pl_x+x^2\Delta_y.\]
Its resolvent, $R_0(s)=(\Delta_{g_0}-s(n-s))^{-1}$, 
is well-defined as a bounded operator on $L^2(\hh^{n+1})$ for $\re s>n/2$.  Moreover, it
has a Schwartz kernel which is explicit, given in terms of the hypergeometric function ${_{2}F}_1$ (see \cite{Patterson:1975}):
\begin{equation}\label{R0.sigma}
R_{0}(s;w,w') = \pi^{-n/2} 2^{-2s-1} \frac{\Gamma(s)}{\Gamma(s - n/2+1)} \sigma^{-s}
{_2F}_1\left(s,s-\tfrac{n-1}2; 2s-n+1; \sigma^{-1} \right),
\end{equation}
where
\begin{equation}\label{sigma}
\sigma := \cosh^2(\demi d(w,w')) = \frac{(x+x')^2 + |y-y'|^2}{4xx'}.
\end{equation}
For $\re s>(n-1)/2$ we can write this, by Euler's integral formula, as
\begin{equation}\label{hypergeom}
R_{0}(s;w,w') = 
\frac{\pi^{-(n+1)/2} 2^{-n-1} \Gamma(s)}{\Gamma(s-\frac{n-1}2)}
\int_{0}^1 \frac{(t(1-t))^{s-\frac{n-1}2}}{(\sigma-t)^{s}}dt.
\end{equation}
An alternative expansion from \cite[Lemma~2.1]{GZ:1995b} is
\begin{equation}\label{R0.tau}
R_{0}(s;w,w') = \pi^{-n/2} 2^{-s-1} \sum_{j=0}^\infty 2^{-2j} \frac{\Gamma(s+2j)}{\Gamma(s-n/2+1+j) \Gamma(j+1)}
\tau^{-s-2j},
\end{equation}
where
\begin{equation}\label{tau}
\tau := \cosh d(w,w') =  \frac{x^2+{x'}^2 + |y-y'|^2}{2xx'}.
\end{equation}
The expressions \eqref{R0.sigma}, \eqref{R0.tau} extend to $s\in \cc$ and produce a meromorphic family of continuous 
operators mapping $L^2_{\rm comp}(\hh^{n+1})$ to $L^2_{\rm loc}(\hh^{n+1})$.  The resolvent $R_0(s)$ also extends to a continuous map $\dot{C}^\infty(\bbar{\hh}^{n+1})\to x^{s}C^\infty(\bbar{\hh}^{n+1})$, where 
$\dot{C}^\infty(\bbar{\hh}^{n+1})$ is the space of smooth functions on $\bbar{\hh}^{n+1}$ which vanish 
to infinite order at the boundary $\pl\bbar{\hh}^{n+1} = S^n$ and $x$ is any smooth boundary defining function of the boundary $\pl\bbar{\hh}^{n+1}$.

The family $R_0(s)$ is analytic for all $s\in\bbC$ when $n$ is even, while for $n$ odd it has simple poles at $-\nn_0$ with the residue at $-k\in-\nn_0$ given by the finite rank operator with Schwartz kernel,
\begin{equation}\label{Residue}
\res_{s=-k}(R_0(s))=
\sum_{0\leq 2j\leq k} \frac{\pi^{-\ndemi}(-1)^{k+2j}2^{k-2j-1}}{j! (k-2j)!\Gamma(j-\ndemi+1-k)}\cosh^{k-2j}(d(w,w')).
\end{equation}
The rank of this operator is computed in \cite[Appendix]{GZ:1995a}: 
\begin{equation}\label{rankRes}
\rank\, \res_{s=-k}(R_0(s))=\dim \ker (\Delta_{S^{n+1}}-k(k+n))=\mc{O}((1+k)^n).
\end{equation} 
We shall need a slightly more precise statement for what follows:  
\begin{lemma}\label{expleft}
Consider the half-space model $\bbR^+_x\x\bbR^n_y$ of $\hh^{n+1}$ and let $\delta>0$. 
Then there exist operators  $M_{\ell}(s):C_0^\infty(\hh^{n+1})\to C^\infty(\bbR^{n})$ and 
$R_{0,N}(s): C_0^\infty(\hh^{n+1})\to x^{s+2N}C^\infty([0,\delta)\x\bbR^n)$, for $\ell, N\in \nn$, such that 
for any $\chi\in C_0^\infty([0,\delta)\x \bbR^n)$ and $\varphi\in C_0^\infty(\hh^{n+1})$, 
\[ (\chi R_0(s)\varphi)(x,y) =\chi(x,y) \sum_{\ell=0}^{N-1} x^{s+2\ell}(M_\ell(s)\varphi)(y)+
(\chi R_{0,N}(s)\varphi)(x,y).\]
In addition, $\Gamma(s-\ndemi+\ell+1)M_\ell(s)$ is meromorphic in $\{\re s>n/2-N\}$ with at most simple poles
at $-k\in -\nn_0\cap\{\re s>n/2-N\}$, and with residue of the form 
\begin{equation}\label{Ress=-k}
\res_{s=-k}(\Gamma(s-\tfrac{n}{2}+\ell+1)M_\ell(s))=\sum_{i=1}^{J(k)}u^{(\ell)}_{k,i}\otimes v_{k,i}
\end{equation}
for some $u^{(\ell)}_{k,i} \in C^\infty(\bbR^n)$, $v_{k,i}\in C^\infty(\hh^{n+1})$, with $J(k)=\mc{O}(k^n)$ for $k$ large. 
The operator $R_{0,N}(s)$ is meromorphic in $\{\re s>n/2-N\}$, with simple poles at $-k\in\nn_0$ and 
residue of rank $\mc{O}(k^n)$.
\end{lemma}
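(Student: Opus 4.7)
The plan is to Taylor-expand the explicit kernel formula \eqref{R0.tau} in powers of $x$ near $x=0$. Setting $r := x'^2 + |y-y'|^2$ so that $\tau = (x^2 + r)/(2xx')$, we have $\tau^{-s-2j} = x^{s+2j}(2x')^{s+2j}(x^2+r)^{-s-2j}$, and the binomial expansion $(x^2+r)^{-s-2j} = \sum_m \binom{-s-2j}{m} x^{2m} r^{-s-2j-m}$ — truncated in integral Taylor form at order $N-j$ in $x^2$ for $j<N$ — yields, after regrouping terms by $\ell = j+m$, an identity of Schwartz kernels
\begin{equation*}
R_0(s;w,w') = \sum_{\ell=0}^{N-1} x^{s+2\ell} m_\ell(s;y,w') + R_{0,N}(s;w,w'),
\end{equation*}
with each $m_\ell$ a finite sum over $j \le \ell$ of explicit hypergeometric-type terms, and $R_{0,N}(s;w,w') = x^{s+2N}$ times a function smooth in $x$ near $x = 0$ (the $j \ge N$ terms of \eqref{R0.tau} already carry an $x^{s+2j}$ factor). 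The operators $M_\ell(s)$ and $R_{0,N}(s)$ are then defined by integrating these kernels against the hyperbolic volume form on $\hh^{n+1}$, giving the identity in the lemma.

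The structural simplification for the pole analysis comes from the indicial equation for $\Delta_{g_0}-s(n-s)$. Substituting the ansatz $\sum_\ell x^{s+2\ell} a_\ell(y)$ into $(\Delta_{g_0}-s(n-s))R_0(s;\cdot,w')=0$ (valid off the diagonal) and using $(\Delta_{g_0}-s(n-s))(x^{s+2\ell} a_\ell) = 2\ell(n-2s-2\ell) x^{s+2\ell} a_\ell + x^{s+2\ell+2}\Delta_y a_\ell$, the recursion $2\ell(n-2s-2\ell) m_\ell + \Delta_y m_{\ell-1} = 0$ iterates to
\begin{equation*}
\Gamma(s-\ndemi+1+\ell)\, M_\ell(s) = \frac{1}{4^\ell \ell!}\, \Delta_y^\ell \bigl[\Gamma(s-\ndemi+1)\, M_0(s)\bigr].
\end{equation*}
The leading coefficient is read off from the $\ell = 0$ term of the Taylor expansion: $\Gamma(s-\ndemi+1) M_0(s)$ has Schwartz kernel $\pi^{-n/2} 2^{-s-1} \Gamma(s) (2x')^s (x'^2+|y-y'|^2)^{-s}$, meromorphic on $\cc$ with simple poles exactly at $s \in -\nn_0$ arising from $\Gamma(s)$ (the remaining factor being entire in $s$). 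Since $\Delta_y^\ell$ is independent of $s$, the same pole structure transfers to every $\Gamma(s-\ndemi+1+\ell) M_\ell(s)$, with residue
\begin{equation*}
\res_{s=-k}\bigl[\Gamma(s-\ndemi+1+\ell)\, M_\ell(s)\bigr](y,w') = \frac{(-1)^k \pi^{-n/2}}{2\cdot k!\cdot 4^\ell \ell!}\, (x')^{-k}\, \Delta_y^\ell\bigl[(x'^2+|y-y'|^2)^k\bigr].
\end{equation*}

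Expanding the polynomial $(x'^2+|y-y'|^2)^k = \sum_{|\mu| \le 2k} y^\mu P_{k,\mu}(x',y')$ in the $y$-variable by the multinomial formula, and setting
\begin{equation*}
u^{(\ell)}_{k,\mu}(y) := \frac{(-1)^k \pi^{-n/2}}{2\cdot k!\cdot 4^\ell \ell!}\, \Delta_y^\ell(y^\mu), \qquad v_{k,\mu}(w') := (x')^{-k}\, P_{k,\mu}(x',y'),
\end{equation*}
produces the decomposition \eqref{Ress=-k}, with $v_{k,\mu}$ independent of $\ell$ and tensor rank $J(k) \le \#\{\mu : |\mu| \le 2k\} = \binom{2k+n}{n} = \mc{O}(k^n)$. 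For the remainder, $R_{0,N} = R_0 - \sum_{\ell<N} x^{s+2\ell} M_\ell$ is meromorphic in $\{\re s > n/2 - N\}$ with poles confined to $-\nn_0$, each simple, and $\rank \res_{s=-k} R_{0,N}$ bounded by $\rank \res_{s=-k} R_0 + \sum_{\ell<N}\rank \res_{s=-k} M_\ell = \mc{O}(k^n)$ via \eqref{rankRes} and the above. The main obstacle is verifying the indicial recursion for the actually-defined $m_\ell$: directly, this amounts to a Pochhammer/binomial identity on the $j$-sum defining $m_\ell$; more conceptually, it follows because both sides are analytic in $(x,y)$ for fixed $w'$ off the diagonal, satisfy $(\Delta_{g_0}-s(n-s)) = 0$, and share the same leading coefficient $m_0$, so they coincide by uniqueness of the indicial solution.
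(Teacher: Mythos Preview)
Your proof is correct and follows essentially the same route as the paper's: both compute the leading coefficient $M_0(s)$ explicitly from \eqref{R0.tau}, use the indicial recursion for $\Delta_{g_0}-s(n-s)$ to express $\Gamma(s-\tfrac{n}{2}+\ell+1)M_\ell(s)$ as a constant times $\Delta_y^\ell$ applied to $\Gamma(s-\tfrac{n}{2}+1)M_0(s)$, read off the simple poles at $-\nn_0$ from the factor $\Gamma(s)$, and then expand the polynomial residue kernel $(x'^2+|y-y'|^2)^k$ in monomials $y^\mu$ to obtain the tensor decomposition with right factors $v_{k,\mu}$ independent of $\ell$.

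One small correction at the very end: your literal bound $\rank\res_{s=-k}R_0 + \sum_{\ell<N}\rank\res_{s=-k}M_\ell$ is $N\cdot\mc{O}(k^n)$, not $\mc{O}(k^n)$. The $N$-independent bound (which is what is actually needed downstream in Proposition~\ref{parareg}) follows precisely from the feature you already established, that the $v_{k,\mu}$ are shared across all $\ell$: this forces $\ker\bigl(\sum_{\ell<N}x^{-k+2\ell}\res_{s=-k}M_\ell\bigr)\supset\{v_{k,\mu}\}^\perp$, hence $\rank\res_{s=-k}R_{0,N}\le \rank\res_{s=-k}R_0 + J(k)=\mc{O}(k^n)$ uniformly in $N$.
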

\begin{proof}
Let  $\chi\in C_0^\infty([0,\infty)\x \bbR^n)$ and $\varphi\in C_0^\infty(\hh^{n+1})$. Using \eqref{R0.tau}, the leading term 
of $(R_0(s)\varphi)(x,y)$ at $x=0$ is given by
\[ [x^{-s}(R_0(s)\varphi)(x,y)]|_{x=0} =  \frac{\pi^{-\ndemi}2^{s-1}
\Gamma(s)}{\Gamma(s-\ndemi+1)}\int_{\hh^{n+1}}(x'^2+|y-y'|^2)^{-s}{x'}^{s}\varphi(x',y')\frac{dx'dy'}{{x'}^{n+1}}.\]
Since the Laplacian is given by $\Delta_{g_0}=-(x\pl_x)^2+nx\pl_x+x^2\Delta_y$ in these coordinates, 
and since we know that $(R_0(s)\varphi)\in x^{s}C^\infty([0,\infty)\x\bbR^n)$, the Taylor expansion of the solution 
of $(\Delta_{g_0}-s(n-s))(R_0(s)\varphi)=0$  near $x=0$ ($\varphi$ has compact support in $\hh^{n+1}$)
is formally determined by the equation and is given modulo $\mc{O}(x^{\re s+2N+2})$ by 
\[ (R_0(s)\varphi)(x,y)\sim\pi^{-\ndemi}\sum_{\ell=0}^{N-1} \frac{2^{-2\ell+s-1}
\Gamma(s)}{\Gamma(s-\ndemi+\ell+1)}x^{s+2\ell}
\Delta_y^{\ell} \int_{\hh^{n+1}}(x'^2+|y-y'|^2)^{-s}{x'}^{s}\varphi(x',y')\frac{dx'dy'}{{x'}^{n+1}}.
\]
Therefore, the operators $M_\ell(s)$ we are looking for are 
\begin{equation}\label{Mell}
(M_\ell(s)\varphi)(y):=\pi^{-\ndemi} \frac{2^{-2\ell+s-1}
\Gamma(s)}{\Gamma(s-\ndemi+\ell+1)}
\Delta_y^{\ell} \int_{\hh^{n+1}}(x'^2+|y-y'|^2)^{-s}{x'}^{s}\varphi(x',y')\frac{dx'dy'}{{x'}^{n+1}}.
\end{equation}

The poles of $\Gamma(s-\ndemi+\ell+1)M_\ell(s)$ are simple and located at $-k\in-\nn_0$, and the residue is the operator with Schwartz kernel
\[\Big(\res_{s=-k} \Gamma(s-\tfrac{n}{2}+\ell+1)M_\ell(s)\Big)(y,x',y')=C_{k,\ell}
\Delta_y^{\ell} (x'^2+|y-y'|^2)^{k}{x'}^{-k}\]
for some constant $C_{k,\ell}\in\cc$.   We can expand
\[(x'^2+|y-y'|^2)^{k}{x'}^{-k}=\sum_{\alpha\in\nn_0^n, \abs{\alpha}\leq 2k}  C_{k,\alpha}\>
y_1^{\alpha_1}\dots y_n^{\alpha_n} v_{k,\alpha}(x',y')\] 
for some $C_{k,\alpha} \in\cc$ and $v_{k,\alpha}\in C^\infty(\hh^{n+1})$.  
Thus we see that the residue of 
$\Gamma(s-\tfrac{n}{2}+\ell+1)M_\ell(s)$ has finite rank and is of the form 
\begin{equation}\label{ress=-k} 
\res_{s=-k}(\Gamma(s-\tfrac{n}{2}+\ell+1)M_\ell(s))=
\sum_{\alpha\in\nn_0^n, \abs{\alpha}\leq 2k}  u^{(\ell)}_{k,\alpha} \otimes v_{k,\alpha},
\end{equation}
where for some constants $C_{k,\ell,\alpha}$
\[
u^{(l)}_{k,\alpha}(y) = C_{k,\ell,\alpha}\>\Delta_y^{\ell} (y_1^{\alpha_1}\dots y_n^{\alpha_n}).
\]
Hence \eqref{Ress=-k} can be satisfied with $J(k)$ given by the dimension of the space of monomials of degree at most $2k$ in $n$ variables, which is  $\mc{O}(k^n)$. 
 
The rank estimate for the residues of $R_{0,N}(s)$ is a direct consequence of the rank estimate for $M_{\ell}(s)$,
the fact that $R_0(s): C_0^\infty(\hh^{n+1})\to x^{s}C^\infty(\bbar{\hh}^{n+1})$, and the model resolvent rank estimate
\eqref{rankRes}.   This completes the proof. 
\end{proof}

\subsection{Parametrix construction} \label{paramreg}
Consider the half-ball $B_0 \subset \bbH^{n+1}$ introduced in \eqref{halfball}.
We define its partial closure in $\bbar{\hh}^{n+1}$ by
\[ \bbar{B}_0:=\{(x,y)\in [0,\infty)\x \bbR^{n} ; \>x^2+y^2<1\},\] 
and set $\pl\bbar{B}_0:=\bbar{B}_0\cap \{x=0\}\subset \bbR^{n}$.
 
The main cutoff for the model neighbourhood is $\chi_\delta\in C_0^\infty(\bbar{B}_0)$.  This comes from the partition of unity for the cover introduced in \S\ref{setup.sec}.  For technical reasons, we assume that $\chi_\delta$ is supported in $\{x\leq \delta\}$.
We also introduce an outer cutoff decomposed into horizontal and vertical components as $\phvert_\delta \phhor$.
Here $\phvert_\delta = \phvert_\delta(x) \in C_0^\infty([0,2\sqrt{\delta}))$ with $\phvert_\delta=1$ on $[0, \sqrt{\delta}]$. The horizontal component $\phhor\in C^\infty_0(\pl\bbar{B}_0)$, is independent of $\delta$, and 
chosen so that $\phvert_\delta\phhor=1$ on the support of $\chi_\delta$. 
The structure of these cutoffs is illustrated in Figure~\ref{regcutoffs.fig}.

For $N$ large, we can further assume that $\phhor,\phvert_\delta$ are chosen, depending on $N$, so as to 
satisfy quasi-analytic estimates.  By this we mean that there exists $C>0$ independent of $N$ such that
for all $\alpha\in\nn^n$ and $j\in\nn$ with $\max(\abs{\alpha},j) \leq 10N$
\begin{equation}\label{QAE}
\norm{\partial_x^j \phvert_\delta}_{L^\infty}\leq (C/\sqrt{\delta})^{j}N^{j}, \quad 
\norm{\partial_y^\alpha \phhor}_{L^\infty}\leq C^{\abs{\alpha}}N^{\abs{\alpha}}.
\end{equation}
The existence of such functions is proved in \cite[Thm.~1.4.2]{Hormander:I}.
\begin{figure}
\begin{center}
\begin{overpic}{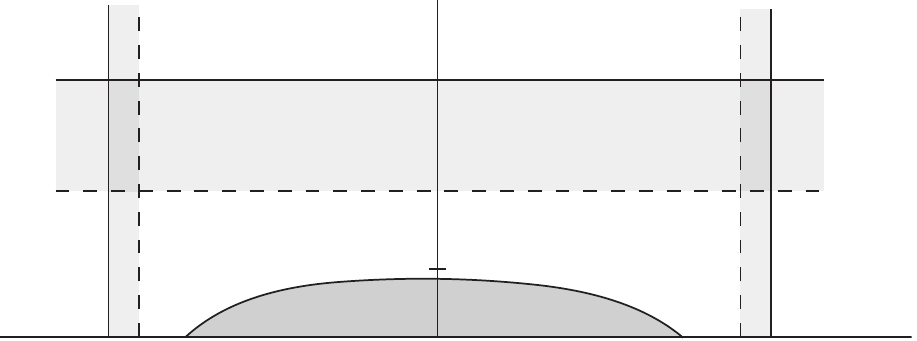}
\put(49,7.4){$\scriptstyle\delta$}
\put(48.5,17){$\scriptstyle\sqrt{\delta}$}
\put(48.5,29){$\scriptstyle2\sqrt{\delta}$}
\put(42,2.2){$\supp \chi_\delta$}
\put(66,9){$\phhor = 1$}
\put(-1,9){$\phhor = 0$}
\put(85,9){$\phhor = 0$}
\put(27,30.5){$\phvert_\delta=0$}
\put(27,12.5){$\phvert_\delta=1$}
\put(98,1.5){$\scriptstyle y$}
\put(48.5,36){$\scriptstyle x$}
\end{overpic}
\end{center}
\caption{Structure of cutoffs in the regular neighbourhood.}\label{regcutoffs.fig}
\end{figure}

For the initial parametrix we simply cutoff the model resolvent to $\phvert_\delta\phhor R_0(s)\chi_\delta$.
Since $\Delta_{g_0}=-(x\pl_x)^2+nx\pl_x+x^2\Delta_y$, one has
\[
(\Delta_{g_0}-s(n-s))\phvert_\delta\phhor R_0(s)\chi_\delta=\chi_\delta+ K_0(s)+L_0(s), 
\]
where
\[ \begin{gathered}
 L_0(s):=\phhor[-(x\pl_x)^2+nx\pl_x,\phvert_\delta]R_0(s)\chi_\delta, \\ 
 K_0(s):=x^2 \phvert_\delta [\Delta_y,\phhor]R_0(s)\chi_\delta.
\end{gathered}\]
From \eqref{R0.tau} and the fact that $\chi_\delta\nabla(\phvert_\delta\phhor)=0$, we see that the Schwartz kernels of $K_0(s)$ and $L_0(s)$ satisfy
\[K_0(s)\in x^{s+2}{x'}^sC^\infty(\bbar{B}_0\x\bbar{B}_0),\quad 
L_0(s)\in x^{\infty}{x'}^sC^\infty(\bbar{B}_0\x\bbar{B}_0).\] 
The $L_0(s)$ term already belongs to any Schatten class as an operator
on $x^NL^2(B_0,dg_0)$ for $\re s>n/2-N$, but the $K_0(s)$ term does not.
Thus we need to pursue the parametrix construction to improve this error. 

The expansion given in Lemma \ref{expleft} implies 
\[K_0(s)=\phvert_\delta [\Delta_y,\phhor]\sum_{\ell=0}^{N-1}x^{s+2\ell+2}M_\ell(s)\chi_\delta+ x^2 \phvert_\delta [\Delta_y,\phhor]R_{0,N}(s)\chi_\delta,
\]
where $M_{\ell}(s)$ are defined by
\eqref{Mell} and the term involving $R_{0,N}(s)$ is in 
$x^{s+2N+2}{x'}^{s}C^\infty(\bbar{B}_0\x\bbar{B}_0)$. 
Applying Lemma \ref{f.abc} to 
\begin{equation}\label{Deffj}
f_j(s;y,w'):= [\Delta_y,\phhor(y)]M_{j-1}(s;y,w')\chi_\delta(w')
\end{equation} 
with $w'$ viewed as a parameter, there exist some differential operators 
$\mc{A}_{j,N}(s), \mc{B}_{j,N}(s)$ with smooth coefficients on $\bbR^+\x\bbR^n$, such that 
\[
(\Delta_{g_0}-s(n-s))
x^{s+2j}\mc{A}_{j,N}(s)f_j +  x^{s + 2j}f_j = x^{s+2N+2}\mc{B}_{j,N}(s)f_j,
\] 
for $j=1,\dots, N$, 
where the term on the right-hand side is in $x^{s+2N+2}{x'}^sC^\infty(\bbar{B}_0\x \bbar{B}_0)$.
Furthermore,
\[\frac{\mc{A}_{j,N}(s)}{\Gamma(s-n/2+j)} \textrm{ and }\frac{\mc{B}_{j,N}(s)}{\Gamma(s-n/2+j)}\textrm{ are holomorphic in }s.\]

Our improved parametrix and error term are given by
\begin{equation}\label{QNEN}
\begin{split}
Q_N(s) &:= \phvert_\delta\phhor R_0(s)\chi_\delta
+\phvert_\delta \sum_{j=1}^{N}x^{s+2j} 
\mc{A}_{j,N}(s)  [\Delta_y,\phhor]M_{j-1}(s)\chi_\delta,\\
E_N(s) &:=  [\Delta_{g_0},\phvert_\delta] \left(\phhor R_0(s)\chi_\delta
+\sum_{j=1}^{N}x^{s+2j}\mc{A}_{j,N}(s) [\Delta_y,\phhor]M_{j-1}(s)\chi_\delta\right) \\
&\qquad + \phvert_\delta [\Delta_y,\phhor]R_{0,N}(s)\chi_\delta + \phvert_\delta\sum_{j=1}^{N}x^{s+2N+2}\mc{B}_{j,N}(s) [\Delta_y,\phhor]M_{j-1}(s)\chi_\delta.
\end{split}
\end{equation}
This construction yields the following:
\begin{prop}\label{parareg}
Let $N\in\nn$ be large, then there exist operators $Q_N(s),E_N(s)$ defined in \eqref{QNEN}
such that
\[ (\Delta_{g_0}-s(n-s))Q_N(s)=\chi_\delta+E_N(s)\] 
and $Q_N(s),E_N(s)$ are meromorphic in $\{\re s>n/2-N\}$ with simple poles at $-k\in-\nn_0$
and with residue an operator of rank $\mc{O}(k^n)$. 
The operator $Q_N(s):x^NL^2(B_0)\to x^{-N}L^2(B_0)$ is bounded  for $\re s>n/2-N$ and $s\notin -\nn_0$, and the Schwartz kernel 
of the error term $E_N(s)$ can be written as
$$
E_N(s; \cdot,\cdot) =  [\Delta_{g_0},\phvert_\delta] x^{s} h_{{\rm cpt}}(s;w,w') {x'}^{s} + \phvert_\delta x^{s+2N+2} h_N(s;w,w') {x'}^{s},
$$
where $h_{{\rm cpt}},h_N$ are smooth functions in  
$\supp(\phhor\nabla \phvert_\delta)\x \supp(\chi_\delta)$ and $\bbar{B}_0\x\bbar{B}_0$, respectively.  On these domains they satisfy quasi-analytic derivative bounds: for $\abs{\alpha}\leq 8N$ and $\dist(s,-\bbN_0)>\vep$
\begin{equation}\label{EN.bounds1}
\abs{\pl_{w}^\alpha h_N(s; w,w')} \le C^{\abs{\alpha}} N^{\abs{\alpha}} e^{C\brak{s}},
\end{equation}
and
\begin{equation}\label{EN.bounds2}
\abs{\pl_{w}^\alpha h_{\rm cpt}(s; w,w')}  \le 
\begin{cases}
e^{C\brak{s}} (C/\sqrt{\delta})^{\abs{\alpha}+n} N^{\abs{\alpha}}, &  \re s \le n/2\\
(C/\sqrt{\delta})^{\abs{\alpha}+2(\re s+N)}N^{\abs{\alpha}}e^{c \abs{\im s}}, &  \re s\geq n/2
\end{cases}
\end{equation}
where $C>0,c>0$ are independent of $s,N,\alpha,\delta$.
\end{prop}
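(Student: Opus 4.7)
The plan is to verify the four assertions in sequence using Lemma~\ref{expleft}, Lemma~\ref{f.abc}, and the quasi-analytic cutoff bounds \eqref{QAE}. First I would establish the parametrix identity by direct computation. Applying $\Delta_{g_0}-s(n-s)$ to the leading term $\phvert_\delta\phhor R_0(s)\chi_\delta$ yields $\chi_\delta$ plus the commutator $[\Delta_{g_0},\phvert_\delta\phhor]R_0(s)\chi_\delta$. Using $\Delta_{g_0}=-(x\pl_x)^2+nx\pl_x+x^2\Delta_y$ together with the fact that $\phvert_\delta$ depends only on $x$ and $\phhor$ only on $y$, this commutator splits into $L_0(s)$ (the $[\Delta_{g_0},\phvert_\delta]$-piece, which is absorbed directly into $E_N$) and $K_0(s)=x^2\phvert_\delta[\Delta_y,\phhor]R_0(s)\chi_\delta$. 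Substituting the expansion from Lemma~\ref{expleft} into $K_0$ produces the leading pieces $x^{s+2j}f_j(s)$ with $f_j$ as in \eqref{Deffj}, plus an $R_{0,N}$-remainder absorbed into $E_N$. Each leading piece is cancelled modulo $x^{s+2N+2}\mc{B}_{j,N}(s)f_j$ via Lemma~\ref{f.abc}, yielding the identity with $Q_N,E_N$ as in \eqref{QNEN}.

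Next, for the meromorphy and rank of residues in $\{\re s>n/2-N\}$: the only possible poles are at $s=-k\in-\nn_0$ (for $n$ odd), coming from $R_0$, $R_{0,N}$, and $M_{j-1}$. By \eqref{rankRes}, \eqref{Ress=-k}, and the last part of Lemma~\ref{expleft}, each such residue has rank $\mc{O}(k^n)$. The correction factors $\mc{A}_{j,N}(s),\mc{B}_{j,N}(s)$ contribute a $\Gamma(s-n/2+j)$, which compensates the $\Gamma(s-n/2+j)^{-1}$ factor in $M_{j-1}(s)$; writing $\mc{A}_{j,N}(s)M_{j-1}(s)=[\mc{A}_{j,N}/\Gamma(s-n/2+j)]\,[\Gamma(s-n/2+j)M_{j-1}(s)]$ and using the holomorphy assertion in Lemma~\ref{f.abc} shows that the correction terms inherit the simple-pole structure with rank $\mc{O}(k^n)$ residues. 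Summing over the finitely many correction indices preserves the $\mc{O}(k^n)$ bound. Boundedness $Q_N(s):x^NL^2(B_0)\to x^{-N}L^2(B_0)$ then follows from the mapping property $R_0(s):\dot{C}^\infty(\bbar{\hh}^{n+1})\to x^sC^\infty(\bbar{\hh}^{n+1})$ applied to $\chi_\delta\varphi$, combined with $\re s+2j>-N$ for every correction term in the region $\re s>n/2-N$.

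For the Schwartz-kernel form of $E_N$: the $[\Delta_{g_0},\phvert_\delta]$-piece is supported in $\{\sqrt\delta\le x\le 2\sqrt\delta\}\x\supp\chi_\delta\subset\{\sqrt\delta\le x\le 2\sqrt\delta\}\x\{x'\le\delta\}$, hence is separated from the diagonal for small $\delta$; extracting the $x^s{x'}^s$ factor carried by $R_0$ and the $M_{j-1}$'s from their explicit forms \eqref{R0.tau} and \eqref{Mell} isolates a smooth kernel $h_{\rm cpt}(s;w,w')$ on the indicated support. The $\phvert_\delta$-piece carries the explicit prefactor $x^{s+2N+2}$ (coming from $x^2\cdot x^{s+2N}$ in $\phvert_\delta[\Delta_y,\phhor]R_{0,N}$ and from the $x^{s+2N+2}\mc{B}_{j,N}f_j$ remainders), and the remaining data is smooth on $\bbar{B}_0\x\bbar{B}_0$, producing $h_N$.

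The hardest step is establishing the quasi-analytic derivative bounds \eqref{EN.bounds1}--\eqref{EN.bounds2}. Each derivative $\pl_w^\alpha$ with $\abs\alpha\le 8N$ distributes by Leibniz among the cutoffs $\phvert_\delta,\phhor$ (controlled by \eqref{QAE}, which furnishes the $(C/\sqrt\delta)^{\abs\alpha}N^{\abs\alpha}$ and $C^{\abs\alpha}N^{\abs\alpha}$ factors), the explicit hypergeometric and Gamma-function kernels (differentiated using \eqref{hypergeom} and \eqref{Mell} on the off-diagonal support, with the hypothesis $\dist(s,-\nn_0)>\vep$ taming the Gamma-function denominators), and the polynomial-in-$s$ coefficients of $\mc{A}_{j,N},\mc{B}_{j,N}$. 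The $e^{C\brak s}$ growth absorbs the Gamma-function asymptotics. The asymmetry between $\re s\le n/2$ and $\re s\ge n/2$ in \eqref{EN.bounds2} reflects the behaviour of $x^{s}{x'}^{s}$ on the commutator support together with the accumulation of factors of $1/\sqrt\delta$ through the $N$-step recursion of Lemma~\ref{f.abc}: for $\re s\ge n/2$ the power count produces $(\sqrt\delta)^{-2(\re s+N)}$ growth, while $x^{i\im s}=e^{i\im s\log x}$ gives the $e^{c\abs{\im s}}$ factor; for $\re s\le n/2$ the $x^s$ factor is bounded and only the Gamma growth $e^{C\brak s}$ remains. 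The essential challenge is to keep the $\delta$-, $s$-, $N$-, and $\alpha$-dependence of all constants cleanly separated throughout this combinatorial tracking.
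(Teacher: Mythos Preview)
Your outline tracks the overall architecture of the paper's proof, but there are two genuine gaps.

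\textbf{Rank of residues after the sum over $j$.} You write that ``summing over the finitely many correction indices preserves the $\mc{O}(k^n)$ bound.'' But the number of correction indices is $N$, so a naive rank sum would give $\mc{O}(Nk^n)$, which is not uniform in $N$. The paper's proof handles this by exploiting the specific structure \eqref{Ress=-k}: the right tensor factors $v_{k,i}$ in $\res_{s=-k}(\Gamma(s-n/2+\ell+1)M_\ell(s))$ can be chosen \emph{independent of $\ell$}. Hence when you sum $\sum_{j=1}^N x^{s+2j}\mc{A}_{j,N}(s)[\Delta_y,\phhor]M_{j-1}(s)\chi_\delta$ and take the residue, the range is still spanned by the same $J(k)=\mc{O}(k^n)$ vectors $v_{k,i}$, regardless of $N$. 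You need to invoke this, not just the finiteness of the sum.

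\textbf{Derivative bounds via Leibniz versus Cauchy.} Your plan to obtain \eqref{EN.bounds1}--\eqref{EN.bounds2} by distributing $\pl_w^\alpha$ via Leibniz over the hypergeometric kernel is different from the paper's method and, as written, misidentifies the source of the $\delta$-dependence. The paper does not differentiate the hypergeometric function directly. Instead it observes that $(xx')^{-s}R_0(s;w,w')$ is real analytic in $w$ away from the diagonal, extends it holomorphically to a complex $\eta$-neighbourhood $B^c(w',\eta)$, bounds it there using \eqref{R0.tau} (for $\re s\le n/2$) and the Euler integral \eqref{hypergeom} (for $\re s\ge n/2$), and then applies Cauchy estimates. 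The parameter $\eta$ is the separation of supports: for $h_N$ the relevant commutator is $[\Delta_y,\phhor]$, whose support is separated from $\supp\chi_\delta$ by a distance \emph{independent of $\delta$}, so $\eta$ is fixed and no $\delta$ appears in \eqref{EN.bounds1}. For $h_{\rm cpt}$ the commutator is $[\Delta_{g_0},\phvert_\delta]$, supported in $\{\sqrt\delta\le x\le 2\sqrt\delta\}$, so the separation from $\supp\chi_\delta\subset\{x\le\delta\}$ is only $\sim\sqrt\delta$; taking $\eta\sim\sqrt\delta$ in the Cauchy estimate produces the $(C/\sqrt\delta)^{\abs\alpha}$ and $(C/\sqrt\delta)^{2\re s}$ factors. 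The $(C/\sqrt\delta)^{2(\re s+N)}$ does \emph{not} come from ``accumulation of $1/\sqrt\delta$ through the $N$-step recursion of Lemma~\ref{f.abc}'' --- that lemma is $\delta$-free. Your Leibniz approach could perhaps be pushed through, but you would have to confront derivatives of $\sigma^{-s}$ and of the hypergeometric series directly, and it is not clear you would recover the clean separation of $\delta$-, $s$-, $N$-, and $\alpha$-dependence that the Cauchy-estimate route gives automatically.
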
  
\begin{proof}
Consider first the poles of $Q_N(s)$.  The poles of the first term, $\phvert_\delta\phhor R_0(s)\chi_\delta$, are accounted for by \eqref{rankRes}.  For the second term, we only need to consider poles coming from $\Gamma(s - n/2 + j)M_{j-1}(s)$,
since $\mc{A}_{j,N}(s)/\Gamma(s-n/2+j)$ is analytic as noted above.  Lemma~\ref{expleft} shows that these poles occur only at $-k\in\nn_0$, and gives
\[
\res_{s=-k} \left(\phvert_\delta \sum_{j=1}^{N}x^{s+2j}\mc{A}_{j,N}(s) [\Delta_y,\phhor]M_{j-1}(s)\chi_\delta \right) 
= \sum_{i=1}^{J(k)} \tilde{u}_{k,i}(s) \otimes v_{k,i},
\]
where the $\tilde{u}_{k,i}(s)$ are given by a sum of analytic differential operators applied to the factors $u^{(\ell)}_{k,i}$ 
appearing in \eqref{Ress=-k}.  The rank of this residue is thus still bounded by $J(k) = \mc{O}(k^n)$.  
The main point here is that we were able to choose the $v_{k,i}$
in \eqref{Ress=-k} independent of $\ell$;  hence the rank estimate is not affected by the sum over $j = 1$ to $N$.
The same reasoning applies to the poles of $E_N(s)$.

The continuity of $Q_N(s) :x^{N}L^2(B_0)\to x^{-N}L^2(B_0)$ comes directly from the boundedness of $\phhor\phvert_\delta R_0(s)\chi_\delta:x^NL^2(B_0)\to x^{-N}L^2(B_0)$  
for $\re s>n/2-N$, which follows easily from the expression \eqref{R0.sigma} 
(see for instance \cite[Prop 3.1, 3.2 and B.1]{Perry:1999}).

Based on \eqref{QNEN} we set 
\[
h_{\rm cpt}(s;w,w') :=  \phhor(y) x^{-s}R_0(s;w,w')x^{-s}\chi_\delta(w')+\sum_{j=1}^{N}x^{2j}
\bigl(\mc{A}_{j,N}  f_j\bigr)(s;w,w') {x'}^{-s},
\]
where $f_j(s;y,w')$ was given by \eqref{Deffj}, and 
\[
h_N(s;\omega,\omega') = [\Delta_y,\phhor] x^{-s-2N-2}R_{0,N}(s;w,w'){x'}^{-s}\chi_\delta(w') 
+ \sum_{j=1}^{N} \bigl(\mc{B}_{j,N} f_j\bigr)(s;w,w'){x'}^{-s}.
\]

To estimate the kernel of $R_0(s)$ we can appeal to the expansion
\eqref{R0.tau} and the uniform estimate, for $\dist(s,n/2 -\bbN) > \vep$,
$$
\abs{\frac{2^{-2j} \Gamma(s+2j)}{\Gamma(s-n/2+1+j) \Gamma(j+1)}} \le e^{C\brak{s}} \brak{j}^{m},
$$
for some $m\in\nn$ independent of $s,j$, which follows directly from Lemma~\ref{beta.lemma}.  Summing over $j$ gives the bound for $\tau\in \cc$ with $|\tau|>1$
\begin{equation}\label{bigsum}
 \Big|\sum_{j=0}^\infty\frac{\Gamma(s+2j)}{\Gamma(s-n/2+1+j) \Gamma(j+1)}
(2\tau)^{-2j}\Big|\leq e^{C\brak{s}}\min(|\tau|-1,1)^{-m+1}.
\end{equation}
Together with the expression \eqref{R0.tau}, this shows that the Schwartz kernel $(xx')^{-s}R_0(s;x,y,x',y')$ is, away from the diagonal, a real analytic function of the variables 
$(x, y,x',y')\in ([0,1)\x \pl\bbar{B}_0)^2$.  
In particular we can obtain estimates for its derivatives 
in terms of its $L^\infty$ bound in a small complex neighbourhood of $([0,1)\x \pl\bbar{B}_0)^2\setminus {\rm diag}$. 

Let $\eta>0$ be small, let $w':=(x',y')\in [0,\delta]\x \pl\bbar{B}_0$, and $B^c(w',\eta)$  be 
\[B^c(w',\eta):=\left\{w=(x,y)\in \cc \x \cc^n;\> \abs{\re(w)-w'}>\eta,\>\abs{\im(w)} < \eta/\sqrt{2} \right\}. \]  
The function $(x,y)\mapsto 1/\tau(x,y,x',y')$ (with $\tau$ defined by \eqref{tau}) admits an analytic extension in 
$B^c(w',\eta)$ and we have the estimate: 
\[ \abs{\tau(w,w')}^{-1} \leq \frac{2x' \abs{x}}{\eta^2/2+2x'\re(x)}\leq \frac{4\delta \abs{x}}
{\eta^2-4\delta \abs{\re(x)}},
\]
for all $w'\in[0,\delta]\x \pl\bbar{B}_0$ and all $w\in B^c(w',\eta)$ with $\abs{\re(x)} < \eta^2/(4\delta)$.
In particular, if we assume that $\delta \le \eta/6$, then we have
\[
\abs{\tau(w,w')}^{-1} \leq 1/2,\quad\text{for }w\in B^c(w',\eta) \cap \{\abs{x} \le \eta/2\},
\] 
uniformly in $w'\in[0,\delta]\x\pl\bbar{B}_0$.

Similarly, $q(w, w') := xx'\tau = x^2+{x'}^2+\abs{y-y'}^2$ admits an analytic extension in $B^c(w',\eta)$ as a function of $w$.
Under the same conditions as above (in particular $\abs{x} \le \eta/2$), we have 
\begin{equation}\label{q.bounds}
\tfrac13 \eta^2 \le \abs{q(w,w')} \le 2 + \mc{O}(\eta).
\end{equation}
This implies
\[\abs{q(w,w')^{-s} }\leq 
C \eta^{-n} e^{C\brak{s}},\quad \text{for }\re s \le n/2,
\]
for $w\in B^c(w',\eta) \cap \{\abs{x} \le \eta/2\}$ as above.
Combining these estimates of $\abs{\tau}^{-1}$ and $\abs{q^{-s}}$ with \eqref{R0.tau} and \eqref{bigsum}, we deduce that for 
$w\in B^c(w',\eta) \cap \{\abs{x} \le \eta/2\}$,
\begin{equation}\label{R0.offdiag}
\abs{(xx')^{-s} R_0(s;w,w')} \le C_\eta e^{C\brak{s}},\quad \text{for }\re s \le n/2.
\end{equation}

For $\re s \ge n/2$ we can improve on \eqref{R0.offdiag} using the expression \eqref{hypergeom} for the resolvent in $\re s > (n-1)/2$.   As above, we will assume that $\delta \le \eta/6$, so that \eqref{q.bounds} holds for
$w\in B^c(w',\eta)$ and $\abs{x} \le \eta/2$.  Furthermore, under the same assumptions, $\sigma(w,w')$ admits an analytic extension such that
\[
\abs{xx'(\sigma-t)} = \abs{\tfrac14 (w - w')^2 - t xx'} \ge \tfrac{1}{24} \eta^2.
\]
Therefore, from \eqref{hypergeom}, we deduce directly that for $w\in B^c(w',\eta) \cap \{\abs{x} \le \eta/2\}$,
\begin{equation}\label{R0.offdiag2}
\abs{(xx')^{-s} R_0(s;x,y,x',y')} \leq (C/\eta^2)^{\re s}e^{c\abs{\im s}},\quad \text{for }\re s \ge n/2.
\end{equation}

The  estimates \eqref{R0.offdiag} and \eqref{R0.offdiag2} are valid  in a complex $w$-neighbourhood with diameter $\eta$;
we thus obtain analytic estimates for derivatives of $(xx')^{-s} R_0(s;x,y,x',y')$ with respect to $w$.
Combining with the quasi-analytic estimates \eqref{QAE} of $\phhor$, and returning to the real variables, this implies that 
for $\abs{w-w'}>\eta$ with $x \le \eta/2$, we have 
\begin{equation}\label{bdR_0}
\abs{\pl_{w}^\alpha \left[x^{-s}\phhor(y)R_0(s;w,w'){x'}^{-s}\right]} \le \begin{cases}
C e^{C\brak{s}} (C/\eta)^{\abs{\alpha}+n} N^{\abs{\alpha}}, &  \re s\leq n/2,\\
(C/\eta)^{\abs{\alpha}+2\re s}N^{\abs{\alpha}}e^{c \abs{\im s}}, &  \re s\geq n/2.
\end{cases}
\end{equation}
for  $\abs{\alpha}\leq 10N$, where $C$ are constants independent of $\eta,\delta,N,s$.

We also note that $f_j(s;y,w'){x'}^{-s}$ (with $f_j$ defined in \eqref{Deffj}) is the $(j-1)$-th term in the Taylor expansion 
of the kernel $[\Delta_y,\phhor(y)] x^{-s}R_0(s;w,w')\chi_\delta(w'){x'}^{-s} 
\in C^\infty(\bbar{B}_0\x\bbar{B}_0)$ in powers of $x^2$ at $x=0$.   By the assumption on the support of 
$\phhor$ and $\chi_\delta$, there is $\eta>0$ independent of $\delta$ such that 
$\abs{w-w'}>\eta$ on the support of the kernel $[\Delta_y,\phhor(y)] x^{-s}R_0(s;w,w')\chi_\delta(w'){x'}^{-s}$. 
Thus there exists $C_0>0$ independent of $\delta,N,s,j$ such that 
$f_j(s;y,w'){x'}^{-s}$ satisfies the hypotheses \eqref{df.quasi} with $A=10$.
The kernel of 
$[\Delta_y,\phhor] x^{-s}R_{0,N}(s)\chi_\delta {x'}^{-s}$ is likewise the remainder in the Taylor expansion at $x=0$, so it satisfies the same quasi-analytic estimates as \eqref{bdR_0}.
The estimates on $h_N$ now follow from Lemma~\ref{f.abc} applied to $f_j(s,y,w')$, together with the corresponding estimates for the $R_{0,N}(s)$ term.  

For the  estimate for $h_{\rm cpt}$, it is the same argument, except 
that to deal with the kernel of $[\Delta_{g_0},\phvert_\delta]\phhor R_0(s)\chi_\delta$, the distance between supports 
satisfies only $\abs{w-w'} \ge \sqrt{\delta} - \delta$.  So at best we can chose something like $\eta=\frac{1}{2} \sqrt{\delta}$. 
\end{proof}

\bigbreak
The construction of Proposition \ref{parareg} would suffice for global bound following the proof in 
Guillop\'e-Zworski \cite{GZ:1995b}, which yields a non-optimal exponent $n+2$ for bounds on resonances. 
To accomplish the dimensional reduction to the exponent $n+1$, as in Cuevas-Vodev \cite{CV:2003}, an additional trick is needed.

Before stating the proposition, let us recall that the weight function $\rho$ of Lemma \ref{constrho} satisfies the bound in the regular neighbourhood (identified with $B_0$)
\begin{equation}\label{brho.bnds}  
x \leq \rho \leq Cx/\delta ,\quad \rho \leq 1, \quad \rho=1 \,\,\textrm{ in }\{x\geq \delta\}.
\end{equation}

\begin{prop}\label{GZ.prop}
For $\delta, \rho, \chi_\delta$ as above and for any $N\in \nn$
there exist meromorphic families of operators with poles at $-\nn_0$ of finite rank, 
\[
\begin{split}
S_N(s) &:\rho^NL^2(B_0)\to \rho^{-N}L^2(B_0),\\
K_N(s) &:\rho^NL^2(B_0)\to \rho^NL^2(B_0), \\
L_N(s) &:\rho^NL^2(B_0)\to \rho^NL^2(B_0), \\
\end{split}
\] 
for $\re s > n/2-N$, such that
\[(\Delta_{g_0}-s(n-s))S_N(s)=\chi_\delta+K_N(s)+L_N(s), 
\]
and the $K_N(s), L_N(s)$ are trace class.  Consider the sectorial region,
\begin{equation}\label{defUN}
U_N:=\bigl\{\abs{\im s}\leq N+4\re s)\bigr\}\cap \bigl\{\dist(s,-\nn_0)>\eps\bigr\}.
\end{equation}
illustrated in Figure~\ref{UNsector.fig}.
For $s \in U_N$, and $\delta>0$ sufficiently small, the singular values of $K_N(s)$ and $L_N(s)$, as operators on $\rho^NL^2(B_0)$, satisfy the following bounds: 
\begin{equation}\label{reg.muK}
\mu_j(K_N(s))\leq e^{-c_\delta N} j^{-2},  \quad j\geq 1,
\end{equation}
and 
\begin{equation}\label{reg.muL}
\mu_j(L_N(s))\leq \begin{cases} C_\delta^N  & \\
e^{-cN} j^{-2}& \text{for } j\ge B_\delta N^n. \end{cases}
\end{equation}
Here all constants $c_\delta,C_\delta,B_\delta,c$ are positive independent of $s, N,k$, and only those indicated depend on $\delta$. 
Moreover, assuming $\delta$ sufficiently small and for $s_N \ge 2N$ we have the estimate
\begin{equation}\label{normLsN}
\norm{L_N(s_N)}_{\rho^NL^2}\leq e^{-c_\delta N}.
\end{equation}
The operators $K_N(s)$ and $L_N(s)$ have possible finite order poles at 
$s = -k$ for $k \in \bbN_0$, and the polar part in the Laurent expansion are
some operators of rank bounded by $\mc{O}(k^{n})$.
\end{prop}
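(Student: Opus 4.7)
The plan is to take $S_N(s) := Q_N(s)$ directly from Proposition~\ref{parareg}, so that $(\Delta_{g_0}-s(n-s))S_N(s) = \chi_\delta + E_N(s)$, and to split the error along the two summands already isolated in the kernel description:
\[
K_N(s) := [\Delta_{g_0},\phvert_\delta]\,x^s\,h_{\rm cpt}(s;\cdot,\cdot)\,{x'}^s, \qquad
L_N(s) := \phvert_\delta\,x^{s+2N+2}\,h_N(s;\cdot,\cdot)\,{x'}^s.
\]
The meromorphic structure of $K_N,L_N$ and the bound $\mc{O}(k^n)$ on the rank of their polar parts at $s=-k$ are then inherited directly from Proposition~\ref{parareg}, so the task reduces to proving the continuity, singular value, and norm estimates. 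Boundedness on $\rho^N L^2(B_0)$ follows from Lemma~\ref{constrho}: $\rho\equiv 1$ on $\supp(\phhor\nabla\phvert_\delta)$, while $\rho\asymp x'/\delta$ on $\supp\chi_\delta$, so conjugation by $\rho^{\pm N}$ produces only a controlled factor $(C/\delta)^N$ on the right variable.

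For $K_N(s)$, the key geometric input is the separation of supports: the left factor $[\Delta_{g_0},\phvert_\delta]$ is supported where $\sqrt\delta\le x\le 2\sqrt\delta$, while the right support lies in $\{x'\le\delta\}$, so the hyperbolic distance between the two is at least $\tfrac12\abs{\log\delta}$. This gain appears through the $x^s$ factor in the kernel and, combined with the sectorial restriction $\abs{\im s}\le N+4\re s$ on $U_N$ that bounds $e^{C\brak s}$ by $e^{CN}$, it yields a pointwise kernel bound of the form $e^{-c_\delta N}$ for $\delta$ sufficiently small, even after absorbing the $(C/\delta)^N$ from the weight conjugation and the prefactors in \eqref{EN.bounds2}. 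To upgrade this $L^\infty$ estimate to the singular value bound $\mu_j(K_N(s))\le e^{-c_\delta N}j^{-2}$, I would approximate $h_{\rm cpt}$ by a Fourier series in the tangential variables truncated at degree $\propto j^{1/(2n)}$; the quasi-analytic derivative bounds \eqref{EN.bounds2}, valid up to order $10N$, together with a few integrations by parts in the tail, deliver the $j^{-2}$ polynomial factor without disturbing the exponential prefactor.

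For $L_N(s)$ there is no support separation available, and the smallness must instead come from the factor $x^{s+2N+2}$: on $\supp\phvert_\delta\subset\{x\le 2\sqrt\delta\}$ this is bounded by $\delta^{\re s+N+1}$. The rough bound $\mu_j(L_N(s))\le C_\delta^N$ is then a Hilbert--Schmidt estimate using $\norm{h_N}_{L^\infty}\le e^{CN}$ from \eqref{EN.bounds1} on $U_N$. For the tail $\mu_j\le e^{-cN}j^{-2}$ at $j\ge B_\delta N^n$, I would truncate the Fourier expansion of $h_N$ in the $n$ tangential $y$-variables on each side at degree $\propto N$, yielding a rank-$\mc{O}(N^n)$ approximation; the $8N$ controlled derivatives from \eqref{EN.bounds1} give truncation error of order $e^{-cN}$ by integration by parts, while the intrinsic smoothness in the $x$-variable contributes the extra $j^{-2}$. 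The norm bound \eqref{normLsN} is more elementary: at $s_N\ge 2N$, $\abs{x^{s_N+2N+2}}\le(2\sqrt\delta)^{4N+2}$ dominates all other factors on the support, and a Schur-type estimate against \eqref{EN.bounds1} at $\abs{\alpha}=0$ gives $\norm{L_N(s_N)}_{\rho^NL^2}\le e^{-c_\delta N}$ for $\delta$ small.

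The main obstacle is the bookkeeping in the $K_N$ estimate: the $(C/\delta)^N$ weight factor must be strictly dominated by the $\delta^{cN}$ gain coming from the support separation and the $x^s$ factors in the kernel, and the $e^{C\brak s}$ growth in \eqref{EN.bounds2} must be tamed by the sectorial constraint on $U_N$. All constants entering the exponential rate $c_\delta>0$ must be tracked carefully to ensure that the net smallness persists uniformly on $U_N$, with $\delta$ chosen small but independent of $s$ and $N$. The Fourier-truncation arguments are routine once the quasi-analytic bounds are in hand, but the dimension count giving $B_\delta N^n$ rather than $B_\delta N^{n+1}$ relies essentially on the fact that one of the $n+1$ dimensions — the $x$-direction — is already regularized by the $x^{s+2N+2}$ factor and does not need tangential truncation.
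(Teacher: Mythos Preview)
Your assignment of $K_N$ and $L_N$ is reversed, and this is fatal rather than cosmetic. The bound \eqref{reg.muK} demands that $K_N(s)$ be exponentially small in $N$, uniformly for $s\in U_N$. The compactly supported term $[\Delta_{g_0},\phvert_\delta]\,x^s h_{\rm cpt}\,{x'}^s$ has no such smallness: at $\re s$ near $0$ (say $s=n/2$, which lies in $U_N$) the factor $x^s$ on the support $x\sim\sqrt\delta$ gives only a fixed power of $\delta$, and the kernel bound \eqref{EN.bounds2} contributes $e^{C\brak{s}}\le e^{CN}$ on $U_N$. There is simply no mechanism producing $e^{-c_\delta N}$. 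The support separation you invoke gives hyperbolic distance $\sim\tfrac12\abs{\log\delta}$, hence decay of the resolvent kernel by a fixed power of $\delta$ at any given $s$, but nothing that improves with $N$. The only term carrying genuine $N$-dependent smallness is the boundary term $\phvert_\delta\,x^{s+2N+2}h_N\,{x'}^s$, via $x^{2N}\le (2\sqrt\delta)^{2N}$ on its support; this must be $K_N$, and indeed the paper assigns it so.

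Even after swapping, your scheme misses the essential idea for $L_N$. Once $L_N$ is the compactly supported term, a naive smoothness argument in the $(n+1)$-dimensional left variable $w$ gives $\mu_j\le C_\delta^N N^{2N}j^{-2-2N/(n+1)}$, hence the threshold $j\ge B_\delta N^{n+1}$ rather than $N^n$. Your claimed dimensional reduction (``the $x$-direction is already regularized by $x^{s+2N+2}$'') refers to the \emph{other} term and does not help here. The paper's device, due to Cuevas--Vodev, is to introduce a second parametrix $\tilde{Q}_N(s)$ with a wider cutoff $\tilde\chi$, replace $Q_N$ by $S_N:=Q_N-\tilde{Q}_N[\Delta_{g_0},\phvert_\delta]x^s h_{\rm cpt}{x'}^s$, and then take the distributional limit $\phvert_\delta\to H_\delta$ (the characteristic function of $\{x\le\sqrt\delta\}$). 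The commutator $[\Delta_{g_0},H_\delta]$ is then a distribution supported on the $n$-dimensional hypersurface $\Sigma=\{x=\sqrt\delta\}$, and $L_N$ factors through $L^2(\Sigma)$; inserting powers of the Dirichlet Laplacian $\Delta_\Sigma$, whose eigenvalue asymptotics are $n$-dimensional, yields the threshold $B_\delta N^n$. This step-function trick is the whole point of the proposition and is absent from your proposal.
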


\begin{figure}
\begin{center}
\begin{overpic}{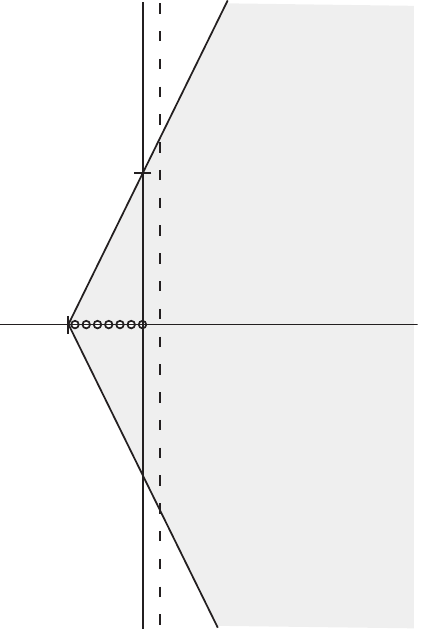}
\put(50,70){$U_N$}
\put(-1,44){$\scriptstyle -N/4$}
\put(16.5,71.5){$\scriptstyle N$}
\put(25.2,44){$\tfrac{n}2$}
\end{overpic}
\end{center}
\caption{The region $U_N$ consisting of a sector centered at $-N/4$ with small disks removed at negative integers.}\label{UNsector.fig}
\end{figure}

\begin{proof}
The proof of the singular value estimates relies on the dimensional reduction trick introduced by Cuevas-Vodev \cite[Lemma~2.1]{CV:2003}.
 
In first phase of the construction, we apply the parametrix exactly as in Proposition~\ref{parareg}, with the outer cutoff in the product form $\phvert_\delta(x) \phhor(y)$, where $\phvert_\delta \phhor = 1$ on $\supp \chi_\delta$.  Eventually $\phvert_\delta$ will be replaced by a 
step function, in order to accomplish the dimensional reduction.
This produces a parametrix $Q_N(s)$ such that
$$
(\Delta_{g_0} - s(n-s)) Q_N(s) = \chi_\delta + E_N(s),
$$
where $E_N(s)$ has the form
$$
E_N(s; \cdot,\cdot) =  [\Delta_{g_0},\phvert_\delta] x^{s} h_{\rm cpt}(s;\cdot,\cdot) {x'}^{s} + \phvert_\delta x^{s+2N+2} h_N(s;\cdot,\cdot) {x'}^{s},
$$
with compactly supported functions $h_{\rm cpt}$ and $h_N$ that satisfy quasi-analytic derivative estimates.

We cannot simply replace $\phvert_\delta$ by a step function in this expression, because that would
change the order of $E_N(s)$.  To avoid this we use the trick from \cite{CV:2003} of introducing another parametrix further out.
Choose $\tilde\chi \in \cinf_0(\bbar{B}_0)$ independent of $\delta$, such that $\tilde\chi=1$ on the support of $\phhor\phvert_\delta$.  Then apply 
Proposition ~\ref{parareg}
once again to produce a parametrix $\tilde{Q}_N(s)$ satisfying
$$
(\Delta_{g_0} - s(n-s)) \tilde{Q}_N(s) = \tilde\chi + \tilde{E}_N(s).
$$

We replace $Q_N(s)$ by
$$
S_N(s;\cdot,\cdot) = Q_N(s;\cdot,\cdot) 
- \tilde{Q}_N(s) [\Delta_{g_0},\phvert_\delta] x^{s} h_{\rm cpt}(s;\cdot,\cdot) {x'}^{s},
$$
and then exploit the supports of the cutoffs to compute that
\begin{equation}\label{DQN}
(\Delta - s(n-s))S_N(s) = \chi_\delta + K_N(s) + L_N(s),
\end{equation}
where
$$
K_N(s;\cdot,\cdot) = \phvert_\delta x^{s+2N+2} h_N(s;\cdot,\cdot) {x'}^{s},
$$
and 
$$
L_N(s; \cdot,\cdot) = - \tilde{E}_N(s) [\Delta_{g_0},\phvert_\delta] x^{s} h_{\rm cpt}(s;\cdot,\cdot) {x'}^{s}.
$$
The point of this procedure is that $[\Delta_{g_0}, \phvert_\delta]$ is now sandwiched 
between smooth kernels in the $L_N(s)$ term, and so
we can take the distributional limit as $\phvert_\delta$ tends to the step function $H_\delta(x) := 
H(\sqrt{\delta}-x)$ where $H$ is Heaviside function.

After this replacement the first error term can be written as
$$
K_N(s) = H_\delta F_N(s), \textrm{ with }  F_N:=x^{s+2N+2} h_N(s;\cdot,\cdot) {x'}^{s},
$$
and for the claimed estimate it is equivalent to consider the operator $\rho^{-N} K_N(s) \rho^N$ acting on $L^2(\Hyp)$.
To absorb the factor $\rho^{-N}$ on the left, we note that $x/\rho \le 1$ by \eqref{brho.bnds}, 
so that $\rho^{-N} x^{\re s+2N} \le (C\delta)^{\re s/2 + N/2}$ for $x \le 2\sqrt{\delta}$.
On the right, we can use $\rho\le 1$ and $\rho/x \le C/\delta$ to bound $\abs{{\rho'}^N {x'}^s} \le C^{\abs{[\re s]_-}} \delta^{\re s}$ when $x'\leq \delta$, here $\rho'=\rho(w')$.
For $s \in U_N$, the $h_N$ bound \eqref{EN.bounds1} gives 
\[
\abs{\pl_{w}^\alpha h_N(s; w,w')} \le C^{\abs{\alpha}+N + \re s} N^{\abs{\alpha}}.
\]

We can thus derive the estimate,
\[
\abs{\Delta_{w}^{n+1} \rho^{-N} F_N(s) {\rho'}^N} \le 
C^{N+\re s}\delta^{\frac32 \re s+N/2}, \quad\text{for }s \in U_N.
\]
For $\delta$ sufficiently small we will have $C^{\re s} \delta^{\frac32 \re s} \le 1$ for $\re s \ge 0$.  And for $\re s <0$ we note that $\re s \ge -N/4$ for $s\in U_N$, so that $\tfrac32 \re s + N/2 \ge N/8$.  Thus, at worst the bound is $(C\delta)^{N/4}$, so that for $\delta$ sufficiently small we have
\[
\abs{\Delta_{w}^{n+1} \rho^{-N} F_N(s) {\rho'}^N} \le e^{-c_\delta N},
\]
for $s$ in the region $U_N$ defined by \eqref{defUN}.
We can use this to make a comparison to the Dirichlet Laplacian $\Delta_K$ on a compact domain $K$ in the $w=(x, y)$ space.  Using the fact that
\begin{equation}\label{mujK}
\mu_j(\Delta_K^{-(n+1)}) \sim c_K j^{-2},
\end{equation}
we obtain that there is $c_\delta>0$ such that  (identifying operators and kernels)
$$
\mu_j\left(\rho^{-N} F_N(s) \rho^N\right) \le e^{-c_\delta N} j^{-2},
$$
for $s \in U_N$.  The bounds \eqref{reg.muK} follow immediately since $\norm{H_\delta} \le 1$.

For the second error term in \eqref{DQN}, the distributional limit gives the operator (viewing $h_{\rm cpt}$ as an 
operator through its Schwartz kernel)
$$
L_N(s) := \tilde{E}_N(s) [\Delta_{g_0}, H_\delta] x^sh_{\rm cpt}(s){x}^{s}, 
$$
where the commutator is of the form
\begin{equation}\label{comm.limit}
[\Delta_{g_0}, H_\delta] = c_1 \mu_\delta' + c_2 \mu_\delta \del_{x} +
c_3 \mu_\delta.
\end{equation}
where $\mu_\delta$ denotes the Dirac mass at $\sqrt{\delta}$ as a distribution on $\bbR$. 
Let $\Sigma\subset \bbR^n$ be a compact set containing the support of $\phhor$ and $\Delta_\Sigma$ the corresponding Dirichlet Laplacian.  
By (\ref{comm.limit}) we can write
$$
L_N(s) = \sum_{j=1}^3 B_j A_j,	 
$$
where $A_j \rho^N : L^2(B_0,dg_0) \to L^2(\Sigma)$ and $\rho^{-N} B_j: L^2(\Sigma) \to  L^2(B_0,dg_0)$.
We will estimate singular values by rewriting these terms as $B_j \Delta_\Sigma^{-l} \Delta_\Sigma^l A_j$
and exploiting the fact that $\mu_j(\Delta_\Sigma^{-l}) \le Cj^{-2l/n}$ on $L^2(\Sigma)$ since $\Sigma$ is
$n$-dimensional.

The kernel of each $A_j$ is a constant times 
$s^m(\sqrt{\delta})^{s-m} \pl_x^{1-m}h_{\rm cpt}(s; \sqrt{\delta},y, x', y') {x'}^{s}$ for $m=0,1$, 
and  by using \eqref{EN.bounds2}, we get for all $y\in \Sigma$, 
$(x',y')\in\supp(\chi_\delta)$
\[
\abs{\pl_y^\alpha h_{\rm cpt}(s;\delta,y,x',y')} \le 
\begin{cases}
C^N (C/\sqrt{\delta})^{\abs{\alpha}+n} N^{\abs{\alpha}}, & s\in U_N,\>  \re s \le n/2\\
(C/\sqrt{\delta})^{\abs{\alpha}+2(\re s+N)}N^{\abs{\alpha}}, & s\in U_N,\>   \re s\geq n/2
\end{cases}
\]
for $\abs{\alpha}\leq 8N$.  If we apply this to estimate $A_j$, the extra factors of $(\sqrt{\delta})^{s}$ and ${x'}^s$ (for $x' \le \delta$) contribute a factor $\delta^{3\re s/2}$ to the estimate for $\re s \ge 0$.
We thus have
\begin{equation}\label{Ajnorm}
\norm{\Delta_\Sigma^l A_j \rho^N}_{ L^2(B_0) \to L^2(\Sigma)} \le 
\begin{cases}
C_\delta^{N+2l} N^{2l} , & s \in U_N, \>\re s\leq n/2, \\
C^{\re s+N+l}\delta^{\re s/2-l-N} N^{2l}, & s \in U_N,\>\re s\ge n/2.
\end{cases}
\end{equation}
for some constant $C_\delta$ depending on $\delta$ and $l\leq 4N$.  Similarly,
\begin{equation}\label{Bjnorm}
\norm{\rho^{-N} B_j}_{L^2(\Sigma) \to  L^2(B_0)} \le 
\begin{cases}
C^N, & \abs{s-n/2}\leq \gamma N, \>\re s\leq n/2, \\
C^{\re s+N}, & \re s\ge n/2.
\end{cases}
\end{equation}
independent of $\delta$ since $\tilde{E}_N(s)$ did not depend on $\delta$.  The norm estimate of $L_N(s)$ follows by combining the $l=0$ case of \eqref{Ajnorm} with \eqref{Bjnorm}.  
In particular we notice that for $s_N \ge 3N$, we get 
$\norm{L_N(s_N)}_{\rho^NL^2} \leq (C\delta)^{N/2}$, which proves \eqref{normLsN} if $\delta$ is sufficiently small. 

By taking $l = N + n$ in \eqref{Ajnorm} we obtain the singular value estimate, for $s\in U_N$ and assuming $\delta$ is sufficiently small,
\[
\begin{split}
\mu_j\left(L_N(s)\right)
& \le C_\delta^{N} N^{2N}  \mu_j(\Delta_\Sigma^{-N-n}) \\
& \le C_\delta^{N} N^{2N} j^{-2-2N/n}.
\end{split}
\]
To simplify this expression, observe that
$$
j^{-2N/n} N^{2N} \le e^{-bN},\qquad\text{for }j \ge e^{bn/2} N^n.
$$
thus choosing $b$ large enough, depending on $\delta$, we obtain the desired result.

Finally, we note that the estimate on the order of the poles follows directly from the corresponding estimate in Proposition~\ref{parareg}.
\end{proof}

\bigbreak
\section{Parametrix construction for cusp neighbourhoods}\label{par.cusp.sec}

We describe the resolvent of the Laplacian $\Delta$ on a quotient $X_c:=\Gamma^c\backslash\hh^{n+1}$ by an abelian parabolic group of rank $k_0\in[1,n-1]$, fixing $\infty$ in the half-space model and generated by some elements $\gamma_1,\dots,\gamma_{k_0}$.  These act on $\bbH^{n+1} = \bbR^+_x \x \bbR^{n-k_0}_y \x \bbR^{k_0}_z$ by
\[
\gamma_j(x,y,z)= (x, A_j y, z+v_j),
\]
where $A_j \in SO(n-k_0)$ are self-commuting and $v_j \in \bbR^{k_0}$. \\

We will use an additional weight function in $X_c$ which is independent of $\delta$ and given by 
\[ \rho_c:=x/(1+x).\]
Recall from Lemma \ref{constrho} that the $\delta$-dependent weight $\rho$ is a global function on $X$, which satisfies 
 $\rho = 1$ for $x \ge \delta$, $\rho\leq 1$ and 
\begin{equation}\label{brho.cusp}
x \le \rho \le Cx/\delta, \quad x\leq \delta.
\end{equation}

\subsection{Spectral decomposition of $\Delta$ in a cusp.}\label{FB.sec}

In what follows,  $X_c$ is viewed as $\bbR_x^+\x F$ with metric $(dx^2+g_F)/x^2$ where $g_F$ is a flat metric on a flat vector bundle $F=\Gamma^c\backslash \bbR^n$ with base a flat $k_0$-dimensional torus $T=:\bbR^{k_0}/\Lambda$ where $\Lambda$ is the lattice spanned by the $v_j$'s.  Let us use the notation $\bbar{X}_c=[0,\infty)\x F$. 
The Laplacian is 
\begin{equation}\label{DX.def}
\Delta_{X_c} = - (x\del_x)^2 + nx\del_x + x^2 \Delta_F,
\end{equation}
acting on $L^2(X_c, x^{-(n+1)} dx\otimes dv_F)$.

We need recall some details of the Fourier-Bessel decomposition of $L^2(X_c)$ from Guillarmou-Mazzeo 
\cite[\S4]{GM:2012}.
The fibers of $F$ are isometric to Euclidean $\bbR^{n-k_0}$, so a polar decomposition $y=r \omega$
with $r=|y|$ in the fibers gives
\[
\Delta_F = - \del_r^2 - \frac{n-k_0-1}{r} \del_r + \frac{1}{r^2} \Delta_{S^{n-k_0-1}} + \Delta_z.
\]
We can further decompose the $L^2$ space of the unit sphere bundle $SF$ of $F$ as a sum of complex line bundles,
\[
L^2(SF) = \oplus_{m=0}^{\infty} \oplus_{p=1}^{\mu_m} \mc{L}^{(m)}_p.
\]
where $SF$ correspond to the submanifold $\{|y|=1\}$ in $\Gamma^c\backslash \bbR^n$ if $\bbR^n=\bbR^{n-k_0}_y\x \bbR_z^{k_0}$.

A section of $\mc{L}^{(m)}_p$ is identified with a function $f(z,\omega)$ on $\bbR^{k_0} \x S^{n-k_0-1}$, such that 
$f(z,\cdot)$ is a spherical harmonic of degree $m$, with the action of the generators $\gamma_j$ of $\Gamma^c$
given by 
\[
f(z+v_j, \omega) = e^{i\alpha_{mpj}} f(z,\omega),
\]
for some holonomy constants $\alpha_{mpj}$ defined as in the Introduction.  The final step is a Fourier decomposition of the sections of 
$\mc{L}^{(m)}_p$, indexed by $v^* \in \Lambda^*$, the lattice in $\bbR^{k_0}$ dual to $\Lambda$:
this corresponds to decompose in Fourier series in $T$ the $\Lambda$-periodic function $e^{-2\pi i\cjg z,A_{mp}\cjd}f(z,\omega)$
where $2\pi A_{mp}:=\sum_{j=1}^{k_0}\alpha_{mpj}v_j^*$ and $\{v_j^*\}$ is the basis for $\Lambda^*$ dual to $\{v_j\}$.
This decomposition yields an orthonormal basis $\{\phi_I\}_{I \in \mc{I}}$ of $L^2(SF)$ indexed by 
\[
\mc{I} := \{(m,p,v^*) \in \bbN_0 \x \bbN \x \Lambda^*:\> 1\le p \le \mu_m\},
\]
such that if $f\in L^2(F)$ is decomposed as 
\[
f(z,r,\omega)=\sum_{I\in \mc{I}}f_I(r)\phi_I(z,\omega),
\]
then 
\[
\Delta_Ff(z,r,\omega)=\sum_{I\in \mc{I}}(\Delta_If_I)(r)\phi_I(z,\omega)
\] 
and the operator $\Delta_I$ acts on $L^2(\bbR^+, r^{n-k_0-1}\>dr)$  by
\begin{equation}\label{DI.def}
\Delta_I = -\del_r^2 - \frac{n-k-1}{r} \del_r + \frac{m(m+n-k_0-2)}{r^2} + b_I^2,
\end{equation}
with 
\[
b_I := \biggl| \sum_{j=1}^{k_0} \alpha_{mpj}v_j^* + 2\pi v^*\biggr|.
\]

\subsection{Diophantine condition}\label{Dioph.sec}

We decompose the index set $\mc{I}$ according to the values of $b_I$:
\[
\mc{I}_0 := \{I\in \mc{I}:\>b_I=0\},\qquad \mc{I}_> := \{I\in \mc{I}:\>b_I>0\}.
\]
To describe the estimates for the resolvent in the irrational holonomy case, we introduce the function on $\bbR$,
\begin{equation}\label{Lambda.def}
\Lambda_{\Gamma^c}(u) := 2 \brak{u} \log \brak{u} + 
\sup\limits_{I\in \mc{I}_>, \>m \le \abs{u}}  \left[ 2(\abs{u}-m) \log \frac{1}{b_I} - 2m \log m \right] 
\end{equation}

We will say that a cusp $X_c$ satisfies the \emph{Diophantine condition} if for some $c>0$, 
$\gamma\ge 0$, 
\begin{equation}\label{Dioph}
b_I > cm^{-\gamma},\>\text{ for }I \in \mc{I}_>.
\end{equation}
Under this condition a straightforward estimate gives
\[
\sup\limits_{I\in \mc{I}_>, \>m \le \abs{u}}  \left[ 2(\abs{u}-m) \log \frac{1}{b_I} - 2m \log m \right] 
\le 2\gamma \abs{u} \log \abs{u},
\]
so that $\Lambda_{\Gamma^c}(u)$ has the minimal growth rate, 
\[
\Lambda_{\Gamma^c}(u) = \mc{O}(\brak{u} \log \brak{u}).
\]

To illustrate the behavior of $\Lambda_{\Gamma^c}$, let us consider the simplest non-trivial example, a rank one cusp in $\bbH^{4}$.
The group $\Gamma^c$ is cyclic with generator,
\[
\gamma(x,y,z) = (x, R_\theta y,z+\ell),
\]
where $R_\theta \in SO(2)$ denotes the rotation by angle $\theta$.  
For rank one it is natural to let the index $m$ range over 
$\bbZ$, so that the multiplicities are all $\mu_m = 1$ and there is no need for the index $p$.  The dual lattice $\Lambda^* = \bbZ/\ell$, so the modes are indexed by $I = (m,j) \in \bbZ\x \bbZ$, and we have
\[
b_I = \frac{2\pi}{\ell}  \abs{\frac{m\theta}{2\pi} + j}.
\]
If $\theta/(2\pi)$ is rational, then $b_I$ is bounded below by a constant for $I \in \mc{I}_>$ so the Diophantine condition is trivially satisfied.
And if $\theta/2\pi$ is an algebraic number, then Roth's theorem on Diophantine approximation \cite{Roth:1955} implies that \eqref{Dioph} holds for any $\gamma>1$.  

On the other hand, if $\theta/(2\pi)$ is transcendental then $\Lambda_{\Gamma^c}(u)$ could grow more rapidly.  For example, define $a_k$ recursively by 
\[a_1 = 2, \quad a_{l+1} = 2^{a_l^q},\]
for some $q \in \bbN$.  Then set
$\theta = 2\pi \sum_{l=1}^\infty (1/a_l)$.  With $m = a_k$, and $j = - \sum_{l=1}^k (a_k/a_l)$,
we find that
$$
b_I = \frac{2\pi}{\ell} \sum_{j=k+1}^\infty \frac{a_k}{a_j}
\approx \frac{2\pi m}{\ell} \> 2^{-m^q}.
$$
This would give $\Lambda_{\Gamma^c}(u) \asymp \abs{u}^{q+1}$.  It is clear that by modifying this construction we could produce angles for which $\Lambda_{\Gamma^c}(u)$ would grow arbitrarily rapidly.

\subsection{Resolvent estimates}
The meromorphic continuation of $R_{X_c}(s)$ was established in \cite[Prop~5.1]{GM:2012}.  Here we follow that proof but keep track of the $s$-dependence in the estimates.  For the $L^2$ estimates we use a boundary defining function $\rho$ which, just as in \S\ref{par.reg.sec}, will depend on the small parameter $\delta$.
 
\begin{prop}\label{RXc.estimate}
For any $\psi\in C_0^\infty(X_c)$ and any $N>0$, the truncated resolvent $\psi R_{X_c}(s)\psi$ admits a meromorphic extension from $\{\re s>\ndemi\}$ to $\{\re s>n/2-N\}$ as a bounded operator mapping 
$\rho_c^N L^2(X_c)$ to $\rho_c^{-N}L^2(X_c)$, where $\rho_c= x/(x+1)$.   The
poles are contained in $k_0/2-\nn_0$ and each $k_0/2-k$ with $k\in\nn_0$ has rank 
of order $\mc{O}(k^{n-k_0})$.
Moreover, for $\eps>0$, one has the bound in $\{s\in \cc; \re s>n/2-N; d(s,k_0/2-\nn_0)>\eps\}$
$$
\norm{\psi R_{X_c}(s)\psi}_{\rho_c^NL^2\to \rho_c^{-N}L^2} \le  
\begin{cases}
e^{C\brak{s} + \Lambda_{\Gamma^c}([\re s]_-)} &,   \textrm{ if }\re s<n/2+1,\\
C  & ,  \textrm{ if } \re s\geq n/2+1,
\end{cases}
$$
where $C$ is independent of $s,N$, and the 
quantity $\Lambda_{\Gamma^c}$ was defined by \eqref{Lambda.def}.
\end{prop}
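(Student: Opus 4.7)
The strategy is to decompose $R_{X_c}(s)$ spectrally along the basis $\{\phi_I\}_{I\in\mc{I}}$ of $L^2(SF)$ from \S\ref{FB.sec}, reducing to a family of explicit $2$-dimensional radial resolvents parametrised by the pair $(m, b_I)$. Under this decomposition, the truncated resolvent splits as
\[
\psi R_{X_c}(s) \psi = \sum_{I\in \mc{I}} \psi \bigl(G_I(s) \otimes P_I\bigr) \psi,
\]
where $P_I$ is the orthogonal projection onto the line spanned by $\phi_I$, and $G_I(s)$ is the Green's function of the radial operator
\[
L_I := -(x\del_x)^2 + n x \del_x + x^2 \Delta_I - s(n-s)
\]
on $\bbR^+_x \x \bbR^+_r$ with measure $x^{-(n+1)} r^{n-k_0-1}\, dx\, dr$. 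The weight $\rho_c = x/(1+x)$ is independent of $\delta$, so the estimates in this proposition are purely about the cusp geometry.

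\textbf{Step 1: Construction of $G_I(s)$.} For each $I$, separation of variables reduces the problem to constructing two linearly independent solutions of an ODE that is, after substituting $u = b_I r$ (or $u = r$ when $b_I=0$), a hypergeometric-type or Bessel-type equation. Concretely, the $x$-behaviour is governed by the indicial roots $s$ and $n-s$, while the $r$-behaviour is governed by spherical Bessel order $\nu_m := m + (n-k_0-2)/2$. When $b_I > 0$ the solution decaying at $x \to \infty$ is built from a Macdonald function $K_{\nu_m}(b_I x)$ composed with the hyperbolic resolvent in the $x$-variable, while the solution regular at $r=0$ uses $I_{\nu_m}(b_I x)$ or $r^m$. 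When $b_I = 0$ the radial problem collapses to the standard resolvent on $\hh^{n-k_0+1}$ acting on the degree-$m$ spherical harmonic component, which is known explicitly.

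\textbf{Step 2: Per-mode estimates.} The task is to bound $\|\psi G_I(s) \psi\|$ on $\rho_c^N L^2 \to \rho_c^{-N} L^2$ uniformly in $I$ and meromorphically in $s$. Classical uniform bounds for Bessel/Macdonald functions (to be assembled in the Appendix) give
\[
\bigl|K_{\nu_m}(b_I x)\bigr| \le C\, e^{C\brak{s}}\, b_I^{-\nu_m}\, m^{\nu_m} \quad\text{for } b_I x \le m,
\]
while for $b_I x \ge m$ there is exponential decay $e^{-b_I x/2}$. Combined with the compact support of $\psi$, this yields a per-mode bound of the form
\[
\|\psi G_I(s) \psi\|_{\rho_c^N L^2 \to \rho_c^{-N} L^2} \le C_\psi\, e^{C\brak{s}} \exp\!\Bigl( 2\bigl(|\re s|-m\bigr)_+ \log\tfrac{1}{b_I} - 2m\log m \Bigr),
\]
when $I \in \mc{I}_>$, and a polynomial-in-$m$ bound (times $e^{C\brak{s}}$) when $I \in \mc{I}_0$. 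The sup over $m \le |\re s|$ of the first expression is exactly the second term in the definition \eqref{Lambda.def} of $\Lambda_{\Gamma^c}$, while the residual factor $2\brak{s}\log\brak{s}$ in \eqref{Lambda.def} absorbs the contribution of modes with $m > |\re s|$. The distance condition $\dist(s, k_0/2 - \nn_0) > \eps$ keeps us away from Gamma-function poles coming from the $\hh^{n-k_0+1}$ resolvent.

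\textbf{Step 3: Summation and poles.} Summing over $I$, the orthonormality of $\{\phi_I\}$ on $SF$ together with rapid decay of $\psi$ in the cusp (controlling the fiber sum) produces the global bound claimed. Meromorphic continuation poles occur only in the $\mc{I}_0$-sector, where the explicit $\hh^{n-k_0+1}$ resolvent contributes simple poles at $s \in k_0/2 - \nn_0$ with residues supported on spherical harmonics of degree $\le 2k$; the rank bound $\mc{O}(k^{n-k_0})$ follows from $\dim H_m = \mc{O}(m^{n-k_0-1})$ summed over $m \le 2k$, as in Lemma~\ref{expleft}. For $\re s \ge n/2+1$ the resolvent is bounded on $L^2$ by the spectral theorem, and the constant bound is immediate.

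\textbf{Main obstacle.} The technically delicate point is the uniformity in $I$ of the special-function bounds in Step~2 when $\re s$ is very negative (so the indicial exponent $s$ on the $x$-side forces large negative powers of $x$) and simultaneously $b_I$ is small while $m$ is moderate: this is precisely the regime that feeds $\Lambda_{\Gamma^c}$. Controlling the interaction between the order $\nu_m$ of the Bessel functions, the argument $b_I x$, and the spectral parameter $s$ — particularly across the transition region $b_I x \sim m$ where the uniform asymptotic changes regime — requires careful use of the integral representations of $K_{\nu_m}$ and $I_{\nu_m}$ and is where the $b_I^{-1}$ factors that drive the $\log(1/b_I)$ term in $\Lambda_{\Gamma^c}$ appear. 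Packaging these estimates into a single clean bound of the claimed exponential form is the core difficulty.
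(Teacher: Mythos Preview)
Your high-level architecture is exactly that of the paper: decompose $R_{X_c}(s)$ along the basis $\{\phi_I\}$, treat $\mc{I}_0$ via the isometry to $\hh^{n-k_0+1}$ (giving the poles in $k_0/2-\nn_0$ with the stated rank), treat $\mc{I}_>$ by explicit special-function bounds, and identify the per-mode contribution with the terms defining $\Lambda_{\Gamma^c}$. The physical half-plane bound via the spectral theorem and the rank count for the residues are also handled as in the paper.

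The gap is in Step~1--2, where you have the Bessel functions wired up incorrectly. After conjugating by $x^{n/2}$, the radial resolvent $R_I(s)$ for $I\in\mc{I}_>$ is \emph{not} built from $K_{\nu_m}(b_I x)$. Rather, one writes
\[
R_I(s)f_I(x,\cdot)=\int_0^\infty F_{s,x,x'}\bigl(\sqrt{\Delta_I}\bigr)f_I(x',\cdot)\,\frac{dx'}{x'},\qquad
F_{s,x,x'}(\tau)=K_\lambda(x\tau)I_\lambda(x'\tau)H(x-x')+\text{(sym.)},
\]
with $\lambda=s-n/2$: the Bessel \emph{order} is the spectral parameter $\lambda$, and the \emph{argument} is $x\tau$ with $\tau$ ranging over the spectrum $[b_I,\infty)$ of $\sqrt{\Delta_I}$. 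The $r$-dependence enters only through the spectral measure $d\Pi_I(t)$, which carries the Bessel functions $J_{\nu_m}(rt)$ of order $\nu_m=(n-k_0-2)/2+m$. Your displayed estimate $|K_{\nu_m}(b_I x)|\le C e^{C\brak{s}} b_I^{-\nu_m} m^{\nu_m}$ cannot be right as written, since the left side does not depend on $s$.

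Once the roles are corrected, the mechanism producing $\Lambda_{\Gamma^c}$ is this: the uniform bound on $F_{s,x,x'}(\tau)$ contributes a factor $\max(|\re\lambda|^{-2\re\lambda},1)\cdot\max(\tau^{2\re\lambda},1)$ for small $\tau$, while the spectral measure $d\Pi_I(t;r,r')$ contributes $t^{2m+n-k_0-2}/\Gamma(\nu_m+1)^2$ near $t=0$. Integrating over $t\in[0,\sqrt{1-b_I^2}]$ with $\tau^2=t^2+b_I^2$ gives, for $\re s\le -m$, a factor $b_I^{2\re s+2m}$; combined with $|\re s|^{-2\re s}$ and $m^{-2m}$ (Stirling), the logarithm is exactly $2|\re s|\log|\re s|+2(|\re s|-m)\log(1/b_I)-2m\log m$, matching \eqref{Lambda.def}. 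So your target per-mode bound in Step~2 is correct, but it comes from $K_\lambda$ at small argument, not from $K_{\nu_m}$.
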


\begin{proof} The bound in the physical half-plane $\re s>n/2+1$ just follows from the $L^2\to L^2$ bound 
obtained through the spectral theorem. Let us then consider that $\re s\leq n/2+1$.
For the proof we conjugate $\Delta_{X_c}$ by $x^{n/2}$, so that instead of \eqref{DX.def} we consider the operator
\begin{equation}\label{DX.conj}
\Delta_{X_c} = -(x\del_x)^2 + x^2\Delta_F + \frac{n^2}4,
\end{equation}
acting on $L^2(X_c, \frac{dx}{x}\otimes dv_F)$.
From the spectral resolution of $\Delta_F=\oplus_{I\in \mc{I}}\Delta_I$, the resolvent $R_{X_c}(s)$ 
is also a direct sum 
\[(R_{X_c}(s)f)(x,r,\omega,z)=\sum_{I\in\mc{I}} (R_I(s)f_I)(x,r)\phi_I(z,\omega),\]
where $f(x,z,r\omega)=\sum_{I\in \mc{I}}f_I(x,r)\phi_I(z,\omega)$, with 
$f_I\in L^2(\bbR^+,\frac{dx}{x}; L^2(\bbR^+,r^{n-k_0-1}dr))$.

\medbreak\emph{Case I}: $I \in \mc{I}_0$.  If $b_I=0$ then the corresponding basis element $\phi_I(z,\omega)$ can just be written $\phi_I(\omega)$, independent of the $z$ variable.  This observation yields an isometry,
\begin{equation}\label{isom} 
\biggl\{f\in L^2(X_c, \tfrac{dx}{x}\otimes dv_F):\> f=\sum_{I\in\mc{I}_0}f_I\phi_I \biggr\}
\to L^2(\hh^{n-k_0+1}),
\end{equation}
given by
\[
f\mapsto  x^{\frac{n-k_0}{2}}f.
\]
As explained in the proof of \cite[Prop.~5.1]{GM:2012}, this identification allows us to realize the resolvent component 
$R_I(s)$, for $\re (s)>n/2$, by 
\begin{equation}\label{RIf}
(R_I(s)f_I)(x,r)\phi_I(\omega) = x^{-\frac{n-k_0}{2}}(R_{\hh^{n-k_0+1}}(s-\tfrac{k_0}{2})(x^{\frac{n-k_0}{2}}f_I\phi_I))(x,r,\omega)
\end{equation}
where $R_{\hh^{n-k_0+1}}(\zeta)=(\Delta_{\hh^{n-k_0+1}}-\zeta(n-k_0-\zeta))^{-1}$ is the resolvent of the Laplacian on the lower dimensional hyperbolic space $\hh^{n-k_0+1}$.  This works because $\Delta_{\hh^{n-k_0+1}}$ preserves the decomposition into spherical harmonics coming from the polar decomposition 
$\hh^{n-k_0+1} = \bbR^+ \x (\bbR^+ \x S^{n-k_0-1})$, so that $R_{\hh^{n-k_0+1}}(\zeta)(f_I\phi_I)$ is still a multiple of $\phi_I$.

The meromorphic extension properties of $R_{\hh^{n-k_0+1}}(\zeta)$ are of course clear from \eqref{R0.sigma}, so that 
for $I\in \mc{I}_0$, $\psi R_I(s)\psi$ has a meromorphic extension to $\re s > n/2-N$ as an operator $\rho_c^{N}L^2 \to \rho_c^{-N} L^2$.
The standard estimate,
\[\norm{\psi R_{\hh^{n-k_0+1}}(\zeta)\psi}_{\rho_c^NL^2\to \rho_c^{-N}L^2}=\mc{O}( e^{C\abs{\re(\zeta)}}),\] 
holds when $\re(\zeta)>(n-k_0)/2-N$, and implies 
\[\norm{\psi R_I(s)\psi}_{\rho_c^NL^2\to \rho_c^{-N}L^2}=\mc{O}(e^{C\abs{\re s}}), \quad  \textrm{ when }\re s>n/2-N.\]
For $n-k_0$ odd, $R_I(s)$ has poles at $k_0/2-\nn_0$, with finite rank residue. 
From \eqref{rankRes} we  obtain the upper bound on the ranks for $n-k_0$ odd,
\[ \rank\,\res_{s=\frac{k_0}{2}-k}(\oplus_{I\in\mc{I}_0} R_I(s))\leq 
 \rank\,\res_{\zeta=-k}(R_{\hh^{n-k_0+1}}(\zeta))=\mc{O}(k^{n-k_0}).\]
(For $n-k_0$ even, $\oplus_{I\in\mc{I}_0} R_I(s)$ has no poles.)

\bigbreak\emph{Case II}: $I\in \mc{I}_>$.  
We follow the proof of \cite[Prop.~5.1]{GM:2012}, keeping 
track of the $s$ dependence of the constants.  
The starting point is the representation for the resolvent component $R_I(s)$, based on a standard ODE analysis of \eqref{DX.conj}, as
\begin{equation}\label{Rc.bessel}
R_{I}(s)f_I(x,\cdot) = \int_0^{\infty} F_{s,x,x'}\left(\sqrt{\Delta_I} \right) f_I(x',\cdot)\frac{dx'}{x'},
\end{equation}
where $F$ is defined in terms of Bessel functions,
\[
F_{s,x,x'}(\tau) := K_\lambda(x\tau) I_\lambda(x'\tau) H(x-x') + I_\lambda(x\tau) K_\lambda(x'\tau) H(x'-x),
\]
with $\lambda := s-n/2$.  
The same ODE analysis applied to \eqref{DI.def} yields the functional calculus: for a bounded function $G$,
\begin{equation}\label{DI.funct}
G(\Delta_I) = \int_0^\infty G(t^2 + b_I^2)\>d\Pi_I(t),
\end{equation}
where $d\Pi_I$ is the spectral measure of $\Delta_I-b_I^2$, the Schwartz kernel of which is
\begin{equation}\label{dPi_I}
d\Pi_I(t;r,r') := \frac{2}{\pi i} (rr')^{-\frac{n-k_0-2}2} J_{\frac{n-k_0-2}2+m}(rt) J_{\frac{n-k_0-2}2+m}(r't) \>t\>dt.
\end{equation}

Fix $\epsilon_0 >0$ and choose $m_0$
such that $m \le m_0$ implies $b_I \ge \epsilon_0$ for $I \in \calJ_>$.  Set 
$\mc{I}_{m_0} := \{I \in \mc{I}_>:\> m \le m_0\}$.  The estimate for $I \in \mc{I}_{m_0}$
follows from (\ref{Fs.est}), \eqref{Rc.bessel}, and \eqref{DI.funct}:
$$
\norm{\rho_c^{N} \psi R_I(s) \psi \rho_c^{N}}_{\mc{L}(L^2)} \le C e^{c\abs{\lambda}} \max\left(\abs{\re\lambda}^{-2\re\lambda}, 1\right),
$$
for $\re \lambda > -N$.

For $I \notin \mc{I}_{m_0}$ we first derive the high-frequency estimate,
\begin{equation}\label{hf.RIbound}
\norm{\rho_c^{N} \psi  \mathbbm{1}_{(1,\infty)}(\sqrt{\Delta_I}) R_I(s) \psi \rho_c^{N}}_{\mc{L}(L^2)} 
\le Ce^{c\abs{\lambda}} \max\left(\abs{\re\lambda}^{-2\re\lambda}, 1\right),
\end{equation}
from (\ref{Fs.est}).  For the low frequencies we have $|b_I| \leq 1$.
The expression \eqref{dPi_I} together with the classical bound (see \cite[Chap 9]{AS}),
\[ \abs{J_{\alpha}(rt)} \leq \frac{(r/2)^{\alpha}t^{\alpha}}{\Gamma(\alpha+1)}, \quad \textrm{ for } t<1,\,  \,  \alpha>0,\]
gives pointwise estimates for $d\Pi_I(t;r,r')$.  Therefore, using \eqref{Fs.est} we get,
for $\re \lambda > - N$,
\begin{equation}\label{smallt}
\begin{split}
&\abs{\psi(r) \psi(r') (\rho_c\rho_c')^{N}  \mathbbm{1}_{(0,1)}(\sqrt{\Delta_I}) F_{s,x,x'}(\sqrt{\Delta_I})(r,r')} \\
&\qquad = \abs{\int_0^{\sqrt{1-b_I^2}} \psi(r) \psi(r') (\rho_c\rho_c')^{N} F_{s,x,x'}\left(\sqrt{t^2+b_I^2} \right) \>d\Pi_I(t,r,r')} \\
&\qquad \le C e^{c\abs{\lambda}} \max\left(\abs{\re\lambda}^{-2\re\lambda}, 1\right) \frac{e^{cm}}{\Gamma(m+(n-k)/2)^2} \\
&\hskip1in\times 
\int_0^{\sqrt{1-b_I^2}}  \max\left(\bigl(t^2+b_I^2\bigr)^{\re \lambda},1\right) t^{2m+n-k_0-2}\>dt.
\end{split}
\end{equation}
The final integral is $\mc{O}(1)$ for $2\re \lambda + 2m +n - k_0-1 > 0$.   For $2\re \lambda + 2m +n - k_0-1 < 0$ it is  easily estimated by
$$
\int_0^{\sqrt{1-b_I^2}}(t^2+b_I^2)^{\re \lambda} t^{2m+n-k_0-2}\>dt \le 
C b_I^{2\re \lambda + 2m+n-k_0-1}.
$$
The term \eqref{smallt} is thus bounded by 
$$
 C e^{c \abs{s}} e^{cm} m^{-2m} \begin{cases}1 & \re s > n/2,\\
\abs{\re s}^{-2\re s} & -m \le \re s \le 0, \\
 \abs{\re s}^{-2\re s} b_I^{2\re s + 2m} &  \re s \le -m, \end{cases}.
$$
We conclude that for any $I \notin \mc{I}_{m_0}$,
\begin{equation}\label{lf.RIbound}
\norm{\rho_c^{N} \psi  \mathbbm{1}_{(0,1)}(\sqrt{\Delta_I}) R_I(s) \psi \rho_c^{N}}_{\mc{L}(L^2)} 
\le C e^{c\abs{s} + \Lambda_{\Gamma^c}([\re s]_-)},
\end{equation}
with $C$ independent of $I$.  This final case completes the proof.
\end{proof}

\bigbreak
We will need another lemma, which is based on \cite[Prop 5.3]{GM:2012} and provides 
structure and estimates on derivatives of $R_{X_c}(s;\omega,\omega')$ in some compact sets
of $(\bbar{X}_c\x \bbar{X}_c)\setminus {\rm diag}$.  Recall that $U_N$ is the sectorial region centered at $s = -N/4$, as defined in
\eqref{defUN}.
\begin{lemma}\label{QAEforRXc}
Let $N\in\nn$ be large, and let $\psi_1,\psi_2 \in C_0^\infty(\bbar{X}_c)$ independent of $\delta$, with disjoint supports, 
satisfying quasi-analytic estimates of the form \eqref{QAE} but independent of $\delta$. 
Then the Schwartz kernel $F(s;w,w')$ of $\psi_1 R_{X_c}(s)\psi_2$ lies in $(xx')^{s}C^\infty_0(\bbar{X}_c\x \bbar{X}_c)$ and 
the following estimates hold for  $s\in U_N$ and ${\rm dist}(s,k_0/2-\nn_0)>\eps$, 
\begin{equation}\label{F3d.bd}
\abs{\del_w^\alpha (x^{-s}F(s;w,w'))\rho(w')^N} \le 
C^{\abs{\alpha}+N} N^{\abs{\alpha}}  e^{\Lambda_{\Gamma^c}([\re s]_-)}e^{C \brak{s}} \delta^{-[\re s]_-},
\end{equation}
when $x \leq 3\delta$, and 
\begin{equation}\label{Fd.bd}
\abs{\del_w^\alpha F(s;w,w')\rho(w')^N} \le 
C_\delta^{\abs{\alpha}} N^{\abs{\alpha}}  e^{\Lambda_{\Gamma^c}([\re s]_-)} \delta^{-[\re s]_-}e^{C(|\im s|+N)-c\re s},
\end{equation}
when $x\ge \delta$,  where in both cases for $\abs{\alpha}\leq 8N$. The constants $C>0, c>0, C_\delta>0$ depend on $\delta$ only as indicated;  all are independent of $s$ and $N$ but do depend on $\eps$.
\end{lemma}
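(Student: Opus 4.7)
The plan is to combine the $L^2$ operator-norm estimates from Proposition~\ref{RXc.estimate} with a complex-analytic extension of the kernel, following the strategy used for Proposition~\ref{parareg}. Since $\supp\psi_1\cap\supp\psi_2=\emptyset$, there is a distance $\eta_0>0$ between the supports which is independent of $\delta$. The structural claim $F\in(xx')^sC_0^\infty(\bbar{X}_c\x\bbar{X}_c)$ is immediate from the mapping property $R_{X_c}(s):\dot{C}^\infty(\bbar{X}_c)\to x^sC^\infty(\bbar{X}_c)$ of \cite{GM:2012}, combined with its formal-adjoint analogue on the right factor and elliptic regularity off the diagonal (where $(\Delta_w-s(n-s))F$ is controlled only by commutators of the cutoffs with $\Delta_{X_c}$).

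For the derivative bounds, I would use the Fourier--Bessel decomposition of \S\ref{FB.sec} to represent $R_{X_c}(s;w,w')$ mode-by-mode. Each $R_I(s)$ is given by a lower-dimensional hyperbolic resolvent via \eqref{RIf} (for $I\in\mc{I}_0$), or by the Bessel integral \eqref{Rc.bessel}--\eqref{dPi_I} (for $I\in\mc{I}_>$). In both cases each mode admits an analytic extension in $w$ to a complex neighborhood of radius $\eta_0/2$ of the off-diagonal region, since the hypergeometric kernel of \eqref{R0.sigma} and the Bessel functions $K_\lambda,I_\lambda$ extend analytically in their arguments. To pass from the $L^2$ operator-norm bound of Proposition~\ref{RXc.estimate} to a pointwise $L^\infty$ bound on this analytic extension, I would use iterated elliptic regularity: writing
\[
(\Delta_w-s(n-s))^k F(s;w,w') = \sum \text{(commutator terms)}\cdot R_{X_c}(s)\psi_2,
\]
taking $k$ of order $N$, and applying Sobolev embedding, we obtain pointwise control of the form $e^{\Lambda_{\Gamma^c}([\re s]_-)+C\brak{s}}$. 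The $\delta$-weight discrepancy between $\rho_c$ (appearing in Proposition~\ref{RXc.estimate}) and $\rho$ (required here) produces the factor $\delta^{-[\re s]_-}$ through \eqref{brho.cusp}.

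With $L^\infty$ control of the analytic extension in hand, Cauchy's formula in polydiscs of radius $\eta_0/2$ yields derivative estimates of the form $C^{|\alpha|}|\alpha|!\,\eta_0^{-|\alpha|}$, which for $|\alpha|\leq 8N$ is bounded by $C^{|\alpha|}N^{|\alpha|}$ via Stirling. Combined through the Leibniz rule with the quasi-analytic bounds \eqref{QAE} on $\psi_1$, this yields the coefficients in \eqref{F3d.bd} and \eqref{Fd.bd}. The distinction between the regions $x\leq 3\delta$ and $x\geq \delta$ accounts for whether we extract the $x^{-s}$ factor capturing the boundary behavior at $x=0$, or exploit the uniform lower bound to estimate $x^{\re s}$ directly; in the latter case this produces the additional $e^{-c\re s}$ factor in \eqref{Fd.bd} when $\re s<0$.

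The principal obstacle I anticipate is propagating the $\Lambda_{\Gamma^c}([\re s]_-)$ factor through the elliptic-regularity step while maintaining pointwise control of the full mode sum. A naive termwise bound on each $R_I$ and summation over $I$ would not produce this factor --- the $\Lambda_{\Gamma^c}$ growth arises essentially from a \emph{supremum} over modes in the low-frequency Bessel estimates of Proposition~\ref{RXc.estimate}, not from individual modes. One must therefore work with the full kernel $F$ and apply the operator-norm bound of Proposition~\ref{RXc.estimate} in aggregate, converting back to pointwise control only at the final step via Sobolev embedding with enough derivatives to absorb the dimensional factors. Keeping this conversion compatible with the $\delta$-dependent weights and with the quasi-analytic cutoff estimates is the most delicate bookkeeping in the argument.
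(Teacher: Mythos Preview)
Your proposal mixes two strategies that do not fit together. You propose to obtain an $L^\infty$ bound on the \emph{complex} analytic extension of $F$ via elliptic regularity and Sobolev embedding, and then apply Cauchy's formula for derivatives. But elliptic regularity and Sobolev give control only on the real slice; they say nothing about the size of the analytic continuation on a complex polydisk, so the input to Cauchy's formula is missing. Conversely, if you abandon Cauchy and try to get the derivative bounds directly from iterated elliptic regularity, the mode-by-mode analytic extension discussion is irrelevant, and a further problem appears: for the boundary estimate \eqref{F3d.bd} you must control $\partial_w^\alpha(x^{-s}F)$ uniformly down to $x=0$ with $\delta$-independent constants. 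The operator $\Delta_{X_c}-s(n-s)$ is only uniformly degenerate at $\{x=0\}$, so standard interior elliptic estimates do not apply there; extracting the $x^{s}$ leading behaviour with the required quasi-analytic constants would require a quantitative $0$-elliptic regularity argument that you have not supplied.

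The paper avoids all of this with a sandwich identity. One builds \emph{two} local parametrices, a right one with $(\Delta_{X_c}-s(n-s))Q^{(2)}_N=\psi_2+E^{(2)}_N$ and a left one with $Q^{(1)}_N(\Delta_{X_c}-s(n-s))=\psi_1+E^{(1)}_N$, each constructed from the explicit half-ball model of Proposition~\ref{parareg} (with an auxiliary scale $\eta$, $\delta\ll\eta\ll1$). Disjointness of the supports then forces
\[
\psi_1 R_{X_c}(s)\psi_2 \;=\; E^{(1)}_N(s)\,R_{X_c}(s)\,E^{(2)}_N(s).
\]
Now the $x^{s}$ structure and the quasi-analytic derivative bounds on the left come for free from the explicit $R_0$-kernel in $E^{(1)}_N$, the ${x'}^{s}$ structure on the right from $E^{(2)}_N$, and the middle factor $R_{X_c}(s)$ is only used through the weighted $L^2$ operator-norm bound of Proposition~\ref{RXc.estimate}, which is exactly where the factor $e^{\Lambda_{\Gamma^c}([\re s]_-)}$ enters. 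This is the missing idea: rather than trying to upgrade the $L^2$ bound on $R_{X_c}$ to a pointwise one, one transfers the pointwise and boundary-asymptotic burden entirely onto the explicit error kernels $E^{(i)}_N$.
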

\begin{proof} We use the method of Prop 5.3 in Guillarmou-Mazzeo \cite{GM:2012} to reduce this to a combination of 
pointwise estimates of the error terms from the regular parametrix construction and the operator norm estimates for
$R_{X_c}(s)$.  

First we take a small parameter $\eta>0$ which is independent of $\delta$, such that $0<\delta\ll \eta\ll 1$.  At the end of the proof we will fix 
$\eta$ more precisely. We cover each $\supp \psi_i$, $i=1,2$, by some open sets  $\mc{U}^{(i)}_j \subset X_c$ for $j \in J^{(i)}$, which are either boundary neighbourhoods isometric to the half-ball $B_0\subset \hh^{n+1}$ or interior neighbourhoods isometric to 
a full geodesic ball of radius $r_0$ of $\hh^{n+1}$ for some small enough $r_0>0$.  We subdivide the index set accordingly as $J^{(i)}_{\rm bd} \cup J^{(i)}_{\rm int}$.   This can be done simply by using 
a covering of a fundamental domain of $\Gamma^c$ in $\hh^{n+1}$, so that in this way the function $x$ in $X_c$ can be chosen to be the same as that of the chart $B_0$ for each boundary neighbourhood.  
We assume that the $\mc{U}^{(1)}_j$ neighbourhoods are all disjoint from the $\mc{U}^{(2)}_j$.

For these sets of charts $\{\mc{U}^{(i)}_j\}$ covering the supports of $\psi_i$,
we introduce cutoffs $\chi^{(i)}_j,\hat{\chi}^{(i)}_j \in C_0^\infty(\mc{U}^{(i)}_j)$ such that 
$\hat{\chi}^{(i)}_j=1$ on the support of $\chi^{(i)}_j$ and so that
\[
\psi_i := \sum_{j\in J^{(i)}}\chi^{(i)}_j.
\]
We assume that for $j\in J^{(i)}_{\rm bd}$ each $\chi^{(i)}_j$ is supported in $\{x <\eta\}$ and $\hat{\chi}^{(i)}_j$ in 
$\{x< {2\sqrt{\eta}}\}$ just as in Proposition \ref{parareg} but with $\eta$ replacing $\delta$, and 
that for $j\in J^{(i)}_{\rm int}$, $\chi^{(i)}_j$ is supported in $\{x >\eta/2\}$ and $\hat{\chi}^{(i)}_j$ in $\{x> {\eta}/4\}$.  
We also assume that all cutoffs satisfy the quasi-analytic estimates of the form \eqref{QAE} (with constants depending on $\eta$ instead of $\delta$) in the coordinates of $\hh^{n+1}$ given by the charts. 

We start by performing a standard parametrix construction with respect to the cutoff $\psi_2$.  
For $j \in J^{(2)}_{\rm int}$, let $R^j_0(s)$ be the resolvent on $\hh^{n+1}$ pulled back to $\mc{U}^{(2)}_j$.  The interior parametrix is 
\[
Q^{(2)}_{N,\rm int}(s) := \sum_{j \in J^{(2)}_{\rm int}} \hat{\chi}^{(2)}_jR^j_0(s)\chi^{(2)}_j,
\]
which satisfies
\[
(\Delta_{X_c}-s(n-s)) Q^{(2)}_{N,\rm int}(s) =  \sum_{j \in J^{(2)}_{\rm int}} \chi^{(2)}_j 
+ E^{(2)}_{N,\rm int}(s),
\]
with
\[
E^{(2)}_{N,\rm int}(s) = \sum_{j \in J^{(2)}_{\rm int}} \left[\Delta_{X_c},\hat{\chi}^{(2)}_j\right] R_0^j(s)\chi^{(2)}_j.
\]

Let $d_0$ denote the minimum hyperbolic distance between the supports of $\nabla\hat{\chi}^{(2)}_j$ and 
$\chi^{(2)}_j$, for $j \in J^{(2)}_{\rm int}$.  We can derive estimates for $E^{(2)}_{N,\rm int}(s)$ using the representations \eqref{hypergeom} with $\sigma \ge \cosh^2 (d_0/2)$ and \eqref{R0.tau} with 
$\tau \ge \cosh d_0$.  In combination with the quasi-analytic estimates of $\hat{\chi}^{(2)}_j$, this gives for $s\in U_N$ and  ${\rm dist}(s,-\nn_0)>\eps$,
\begin{equation}\label{QAE.ENint}
\abs{\rho_c(w)^{-N}E^{(2)}_{N,\rm int}(s,w,w')\rho(w')^{N}} \le 
e^{-c_\eta \re s +C_\eta (\abs{\im s}+N)},
\end{equation}
where $c_\eta>0$ and $C_\eta>0$ depend on $d_0$ and $\eta$.

For the boundary neighbourhoods $\mc{U}^{(2)}_j$ with $j\in J^{(i)}_{\rm bd}$, we construct parametrices as in Proposition \ref{parareg}, with $\chi^{(i)}_j$ playing the role of $\chi_\delta$ and replacing the parameter $\delta$ by $\eta$.  
Summing these parametrices for $j\in J^{(i)}_{\rm bd}$ gives $Q^{(2)}_{N, \rm bd}(s)$ satisfying
\[
(\Delta_{X_c}-s(n-s))Q^{(2)}_{N, \rm bd}(s)=\sum_{j \in J^{(2)}_{\rm bd}} \chi^{(2)}_j +E^{(2)}_{N, \rm bd}(s).
\]
From \eqref{EN.bounds1} and \eqref{EN.bounds2} with $\delta$ replaced by $\eta$, the form of $E_N(s)$ in Proposition \ref{parareg},  and the quasi-analytic estimates of $\hat{\chi}^{(2)}_j$, we derive the estimate
for $s\in U_N, {\rm dist}(s,n/2-\nn/2)>\eps$
\begin{equation}\label{QAE.ENbd2}
\abs{\rho_c(w)^{-N}E^{(2)}_{N,\rm bd}(s;w,w')\rho(w')^N}\leq 
 \eta^{-{\rm Re}(s)/2}e^{C\brak{s}+C_\eta N}\delta^{-[\re s]_-},
\end{equation}
for $\abs{\alpha}\leq 8N$, where $C>0$ does not depend on $\eta$ but $C_\eta$ does;  note that 
we have used $|{x'}^s\rho(w')^N|\leq (C\delta)^{-[\re s]_-}$ for $x'\leq \eta$. 

Combining the interior and boundary parametrices we conclude that there exist meromorphic operators $Q^{(2)}_N(s):\rho_c^NL^2\to \rho_c^{-N}L^2$, such that 
\begin{equation}\label{rightpara}
(\Delta_{X_c}-s(n-s))Q^{(2)}_N(s)=\psi_2+E^{(2)}_N(s),
\end{equation}
with 
\[
E^{(2)}_N(s) := E^{(2)}_{N,\rm int}(s) + E^{(2)}_{N,\rm bd}(s).
\]
The poles of $Q^{(2)}_N(s)$ and $E^{(2)}_N(s)$ are contained in $-\nn_0$.
 
In the same way, but exchanging the positions of $\hat{\chi}^{(1)}_j$ and $\chi^{(1)}_j$ and solving away boundary terms on the right instead of the left, we can construct $Q^{(1)}_N(s):x^NL^2\to x^{-N}L^2$ so that 
\begin{equation}\label{leftpara}
Q^{(1)}_N(s)(\Delta_{X_c}-s(n-s))=\psi_1+E^{(1)}_N(s).
\end{equation}
Here
\[
E^{(1)}_N(s) = E^{(1)}_{N,\rm int}(s) + E^{(1)}_{N,\rm bd}(s),
\]
with $E^{(1)}_{N,\rm int}(s) \in C_0^\infty(X_c\x X_c)$ and 
$E^{(1)}_{N,\rm bd}(s;\cdot,\cdot) \in x^s{x'}^{s+2N+2}C_0^\infty(\bbar{X}_c\x\bbar{X}_c)$.  All have poles contained in $-\nn_0$. Using the quasi analytic bounds on the cutoffs, the estimates \eqref{QAE.ENint} apply to $E^{(1)}_{N,\rm int}(s)$ also, but with additional derivative bounds,
\begin{equation}\label{QAE.ENint2}
\abs{\pl_{\omega}^\alpha E^{(1)}_{N,\rm int}(s,w,w')\rho_c(w')^{-N}} \le 
C_\eta^{|\alpha|}N^{|\alpha|}e^{-c_\eta \re s +C_\eta( \abs{\im s}+N)},
\end{equation}
for $\abs{\alpha}\leq 8N$, $s\in U_N$ with ${\rm dist}(s,n/2-\nn/2)>\eps$, 
where $c_\eta>0,C_\eta>0$ depend on $d_0$ and on $\eta$.
The same method as in the proof of Lemma \ref{parareg} 
yields estimates of the form of \eqref{QAE.ENbd2}, but with additional derivatives bounds: 
\begin{equation}\label{QAE.ENbd1}
\abs{\pl_w^\alpha (x^{-s}E^{(1)}_{N,\rm bd}(s;w,w'))\rho_c(w')^{-N}}\leq 
C_\eta^{\abs{\alpha}+N} N^{\abs{\alpha}} \eta^{-{\rm Re}(s)/2}e^{C\brak{s}},
\end{equation}
for $\abs{\alpha}\leq 8N$, where $C$ does not depend on $\eta$. We will also need an estimate
 for $|\pl_w^\alpha (E^{(1)}_{N,\rm bd}{\rho_c'}^{-N})|$ in the region $x\in [\delta,\eta]$, and this follows from
\eqref{QAE.ENbd1}  by noticing that, by analyticity of $x$, one has for $|\alpha|\leq 10N$ and $x\in[\delta,\eta]$
\begin{equation}\label{derivex^s}
|\pl_{w}^\alpha (x^{s})|\leq e^{C\cjg s\cjd}C_\delta^{|\alpha|}N^{|\alpha|} x^{\re s}.
\end{equation}
Thus for $x\in[\delta,\eta]$,
\begin{equation}\label{QAE.ENbd11}
\abs{\pl_w^\alpha (E^{(1)}_{N,\rm bd}(s;w,w'))\rho_c(w')^{-N}}\leq 
C_\delta^{\abs{\alpha}+N} N^{\abs{\alpha}} \left(\frac{x}{\sqrt{\eta}}\right)^{\re s}e^{C\brak{s}}.
\end{equation}
Because of the disjointness of the supports, applying $R_{X_c}(s)\psi_2$ on the right in \eqref{leftpara} gives
\[
0 = Q^{(1)}_N(s)\psi_2  = \psi_1 R_{X_c}(s)\psi_2 + E^{(1)}_N(s)R_{X_c}(s) \psi_2,
\]
and then \eqref{rightpara} implies
\[
\psi_1 R_{X_c}(s)\psi_2 = E^{(1)}_N(s)R_{X_c}(s)E^{(2)}_N(s).
\]
We now combine the estimates \eqref{QAE.ENint}, \eqref{QAE.ENbd2}, \eqref{QAE.ENbd1}, in combination with the $L^2$ estimates on $R_{X_c}(s)$ from Proposition \ref{RXc.estimate}: for $s\in U_N$, ${\rm dist}(s,n/2-\nn/2)>\eps$, and  $x\leq 3\delta \ll \eta$,
this yields
\[ |\pl^\alpha_w( x^{-s}F(s;w,w'))\rho(w')^N| \leq C_\eta^{\abs{\alpha}+N} 
N^{\abs{\alpha}} e^{C_\eta\brak{s}} e^{\Lambda_{\Gamma^c}([\re s]_-)}\delta^{-[\re s]_-}.
\]
For $\eta$ fixed independent of $\delta$ gives \eqref{F3d.bd} away from the set $n/2-\nn/2$. 

We next consider the region $x\geq \delta$.  We simply gather the estimates 
\eqref{QAE.ENint}, \eqref{QAE.ENint2},Ê\eqref{QAE.ENbd2}, \eqref{QAE.ENbd11}  together with  Proposition \ref{RXc.estimate}, and fix $\eta$ small independent of $\delta$.  This gives \eqref{Fd.bd} away from the set $n/2-\nn/2$. 

Finally, to obtain the estimate in the epsilon neighbourhood of $(n/2-\nn/2)\setminus 
(k_0/2-\nn_0)$, it suffices to use the maximum principle since we know the operator is analytic there, by Proposition \ref{RXc.estimate}.
\end{proof}

\bigbreak
We now recall Lemma 5.2 in \cite{GM:2012} and add the estimate on the rank of the poles.  (This can be compared to the regular neighbourhood version given in Lemma \ref{expleft}.) 
\begin{lemma}\label{Mellcusp}
Let $A>0$. There exist operators $M_\ell(s):C_0^\infty(X_c)\to C^\infty(F)$, $\ell\in\nn_0$, and 
$R_{X_c,N}(s): C_0^\infty(X_c)\to x^{s+2N}L^\infty(X_c)$, $N\in\nn$,
such that for all $\chi\in C_0^\infty([0,A)\x \bbR^n)$ and all $\varphi\in C_0^\infty(X_c)$, 
\[ (\chi R_{X_c}(s)\varphi)(x,y,z) =\chi(x,y,z) \sum_{\ell=0}^{N-1}x^{s+2\ell}(M_\ell(s)\varphi)(y,z)+
(\chi R_{X_c,N}(s)\varphi)(x,y,z).\]
In addition, $\Gamma(s-\ndemi+\ell+1)M_\ell(s)$ is meromorphic in $s\in\cc$ with at most simple poles
at $k_0/2-k\in k_0/2 -\nn_0\cap\{\re s>n/2-N\}$, and with residue of the form 
\begin{equation}\label{Ress=k0-k}
\res_{s=\frac{k_0}{2}-k}(\Gamma(s-\tfrac{n}{2}+\ell+1)M_\ell(s))=\sum_{i=1}^{J(k)}u^{(\ell)}_{k,i}\otimes v_{k,i},
\end{equation}
for some $u^{(\ell)}_{k,i} \in C^\infty(F)$, $v_{k,i}\in C^\infty(X_c)$, with $J(k)=\mc{O}(k^{n-k_0})$ for $k$ large. 
The operator $R_{X_c,N}(s)$ is meromorphic $\{\re s>n/2-N\}$ with simple poles at each $k_0/2-k\in k_0/2-\nn_0$ and 
the residue has rank $\mc{O}(k^{n-k_0})$.
\end{lemma}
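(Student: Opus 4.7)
The plan is to mirror the proof of Lemma~\ref{expleft}, using the Fourier–Bessel decomposition of \S\ref{FB.sec} in place of the explicit half-space resolvent. First, I would observe that for $\varphi\in C_0^\infty(X_c)$ the kernel $R_{X_c}(s)\varphi$ belongs to $x^s C^\infty([0,A)\x F)$ near $x=0$; this can be extracted from the $0$-calculus parametrix of \cite{GM:2012}, or read off from \eqref{Rc.bessel} using the asymptotic $I_\lambda(x\tau)\sim (x\tau/2)^\lambda/\Gamma(\lambda+1)$. Plugging the ansatz $R_{X_c}(s)\varphi=\sum x^{s+2\ell}a_\ell(y,z)+\mc{O}(x^{s+2N})$ into $(\Delta_{X_c}-s(n-s))R_{X_c}(s)\varphi=0$ on $\{x<A\}\setminus\supp\varphi$, with $\Delta_{X_c}=-(x\pl_x)^2+nx\pl_x+x^2\Delta_F$, the coefficient of $x^{s+2\ell}$ yields the indicial recursion
\[ 4\ell(s-\ndemi+\ell)\,a_\ell=\Delta_F a_{\ell-1}. \]
Setting $M_\ell(s)\varphi:=a_\ell$ and iterating gives
\[ \Gamma(s-\ndemi+\ell+1)\,M_\ell(s)=\frac{\Gamma(s-\ndemi+1)}{4^\ell\,\ell!}\,\Delta_F^\ell\,M_0(s), \]
while $R_{X_c,N}(s)$ is defined as the Taylor remainder $R_{X_c}(s)-\sum_{\ell<N}x^{s+2\ell}M_\ell(s)$.

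Next, I would analyze the poles of $\Gamma(s-\ndemi+1)M_0(s)$ using $R_{X_c}(s)=\bigoplus_{I\in\mc{I}}R_I(s)$. For $I\in\mc{I}_>$, the Bessel representation \eqref{Rc.bessel} gives the $I$-component of $M_0(s)$ explicitly with a $1/\Gamma(s-\ndemi+1)$ prefactor coming from $I_\lambda(x\tau)\sim (x\tau/2)^\lambda/\Gamma(\lambda+1)$, so these modes contribute no poles to $\Gamma(s-\ndemi+1)M_0(s)$. For $I\in\mc{I}_0$, the identification \eqref{RIf} realizes $R_I(s)$ through $R_{\bbH^{n-k_0+1}}(s-k_0/2)$, and applying Lemma~\ref{expleft} to $\bbH^{n-k_0+1}$ (with the $n$ there replaced by $n-k_0$ and the spectral parameter by $s-k_0/2$) shows that $\Gamma((s-k_0/2)-(n-k_0)/2+1)=\Gamma(s-\ndemi+1)$ multiplied by the $\mc{I}_0$-contribution to $M_0$ has at most simple poles at $s-k_0/2=-k\in -\nn_0$, i.e.\ at $s=k_0/2-k$. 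Summing over $I\in\mc{I}_0$ and adding the $\mc{I}_>$ part yields the claimed pole structure of $\Gamma(s-\ndemi+1)M_0(s)$, which then transfers to every $\Gamma(s-\ndemi+\ell+1)M_\ell(s)$ via the recursion.

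The residue at $s=k_0/2-k$ is inherited from the explicit form \eqref{Residue} of $\res_{\zeta=-k}R_{\bbH^{n-k_0+1}}(\zeta)$, which is a polynomial of degree $\le k$ in $\cosh d(w,w')$; its leading boundary term expands into monomials of total degree $\le 2k$ in the $n-k_0$ Euclidean variables, producing a tensor-product decomposition
\[ \res_{s=k_0/2-k}\bigl(\Gamma(s-\ndemi+1)M_0(s)\bigr)=\sum_{i=1}^{J(k)}u_{k,i}^{(0)}\otimes v_{k,i}, \]
with $J(k)=\mc{O}(k^{n-k_0})$, matching the lower-dimensional version of \eqref{rankRes}. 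The essential observation, as in Lemma~\ref{expleft} and Proposition~\ref{parareg}, is that the source factors $v_{k,i}\in C^\infty(X_c)$ can be chosen independently of $\ell$: applying $\Delta_F^\ell$ via the recursion only affects the target variables, giving $u^{(\ell)}_{k,i}=(\textrm{const})\,\Delta_F^\ell u^{(0)}_{k,i}$ while leaving $v_{k,i}$ unchanged. This establishes \eqref{Ress=k0-k} with the uniform rank bound, and the rank estimate for the residues of $R_{X_c,N}(s)$ then follows from its definition together with the $\mc{O}(k^{n-k_0})$ residue-rank bound of Proposition~\ref{RXc.estimate}. The main obstacle is the second step: carefully tracking the gamma-factor bookkeeping through the conjugation \eqref{RIf} to verify that the poles of $\Gamma(s-\ndemi+1)$ at $\ndemi-1-\nn_0$ are genuinely cancelled by zeros of $M_0$ produced by the $\Gamma(s-k_0/2)/\Gamma(s-\ndemi+1)$-type prefactor arising in the boundary expansion of the hyperbolic resolvent.
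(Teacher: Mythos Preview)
Your proposal is correct and follows essentially the same approach as the paper: both split into $\mc{I}_>$ and $\mc{I}_0$ via the Fourier--Bessel decomposition, use Bessel-function holomorphy for the former, and reduce the latter to Lemma~\ref{expleft} on $\hh^{n-k_0+1}$ through the identification \eqref{RIf}. Your flagged ``main obstacle'' is not one: under the shift $\zeta=s-k_0/2$ the gamma factor in Lemma~\ref{expleft} becomes $\Gamma(\zeta-(n-k_0)/2+\ell+1)=\Gamma(s-n/2+\ell+1)$, so the bookkeeping aligns exactly, and the paper's only cosmetic difference is that it quotes the explicit Bessel-kernel formula for $M_\ell(s)$ from \cite{GM:2012} rather than deriving it from the indicial recursion as you do.
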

\begin{proof}
The existence of $M_\ell(s)$ is proved in Lemma 5.2 of \cite{GM:2012}, these are the operators 
\[ M_{\ell}(s)\varphi= \frac{2^{-2\ell}}{\ell !\Gamma(s-\ndemi+\ell+1)}\int_{0}^\infty \Delta_F^\ell 
K_{s-\ndemi}\left(x'\sqrt{\Delta_F}\right){x'}^{-\ndemi}\varphi(x',\cdot)\frac{dx'}{x'}.
\]

To analyze these operators, we use the Fourier-Bessel decomposition of $\Delta_F$ introduced in \S\ref{FB.sec}.
Decomposing $\varphi=\sum_{I\in \mc{I}}\varphi_I\phi_I$, where 
$\phi_I$ is the orthonormal basis of $L^2(SF)$, $M_\ell(s)\varphi$ decomposes as a sum
$M_\ell(s)\varphi=\sum_{I\in\mc{I}} (M_{\ell,I}(s)\varphi_I) \phi_I$ for 
\begin{equation}\label{MellI} 
M_{\ell,I}(s)\varphi_I= \frac{2^{-2\ell}}{\ell !\Gamma(s-\ndemi+\ell+1)}\int_{0}^\infty \Delta_I^\ell K_{s-\ndemi}(x'\sqrt{\Delta_I}){x'}^{-\ndemi}\varphi_I(x',\cdot)\frac{dx'}{x'}.
\end{equation}
For each $I\in \mc{I}_>$, this is holomorphic in $s$ since $\Delta_I\geq b_I^2>0$, $x'$ is restricted to a compact interval of $(0,\infty)$ by the support of $\varphi\in C_0^\infty(X_c)$, and $K_{s-\ndemi}(x'\sqrt{\Delta_I})$
is a holomorphic family in $s\in \cc$ of bounded operators on $L^2(\bbR^+,r^{n-k_0-1}dr)$. For $\re s>n/2-N$, the bound in terms of 
$I\in \mc{I}_>$ is uniform in $I$ and depends only on $N$, by the same argument as in the proof of 
Proposition \ref{RXc.estimate}. Therefore 
$\Gamma(s-\ndemi+\ell+1)\sum_{I\in\mc{I}_>}M_{\ell,I}(s)\varphi$ is a holomorphic family of operators on 
$L^2(F)$ for $\re s>n/2-N$.

For the terms with $I\in \mc{I}_0$, we can prove the extension of $M_{\ell,I}(s)$ in $s$ with poles appearing at 
$k_0/2-\nn_0$ from the expression \eqref{MellI}, but it is in fact simpler to use the expression 
\eqref{RIf} to write the restriction of $R_{X_c}(s)$ to the $I\in \mc{I}_0$ components 
in terms of $R_{\hh^{n-k_0+1}}(s-\tfrac{k_0}{2})$:
\[ R_{X_c}(s)\sum_{I\in\mc{I}_0}\varphi_I\phi_I= 
x^{-\frac{n-k_0}{2}}R_{\hh^{n-k_0+1}}(s-\tfrac{k_0}{2})(x^{\frac{n-k_0}{2}}\sum_{I\in\mc{I}_0}\varphi_I\phi_I).
\]
We can then apply the expansion in Lemma \ref{expleft} for $R_{\hh^{n-k_0+1}}(s-\frac{k_0}{2})$.  This shows
that the operator $\Gamma(s-\ndemi+\ell+1)M_{\ell}(s)$ restricted to functions of the form
$\varphi=\sum_{I\in\mc{I}_0}\varphi_I\phi_I$ have poles at 
$k_0/2-k\in k_0/2-\nn_0$ with residues also given using Lemma \ref{expleft}, and rank $\mc{O}(k^{n-k_0})$.

The fact that $M_\ell(s)\varphi\in C^\infty(F)$ if $\varphi\in C_0^\infty(X_c)$ is a consequence of elliptic regularity and the fact that for all $L\in\nn_0$, 
$\Delta_F^LK_{s-\ndemi}(x' \sqrt{\Delta_F})\varphi(x',\cdot)$ is bounded in $L^2(F)$ uniformly for $x'$ in any compact set 
of $(0,\infty)$. Finally, the properties of $R_{X_c,N}(s)$ follows directly from those of $R_{X_c}(s)$  and 
of $M_{\ell}(s)$.
\end{proof}
\subsection{Cusp parametrix construction}\label{paramcusp}

For the parametrix in a model cusp $X_c$, we will consider the model neighbourhood
$\Gamma^c\backslash\{(x,y,z)\in \hh^{n+1} ; \>x^2+\abs{y}^2 \ge R\}$.
We need to introduce a rather complicated series of cutoff functions:
\begin{enumerate}
\item  $\chi \in \cinf(X_c)$ is independent of $\delta$, has support in the cusp region 
$\{x^2+\abs{y}^2  \ge R+2\}$, with $\chi=1$ in 
$\{x^2+\abs{y}^2  \geq R+3\}$.  This corresponds to the cusp neighbourhood 
component of a partition of unity for the set of charts described in \S\ref{setup.sec}.
\item $\psi \in \cinf_0(\bbar{X}_c)$ is supported away from the cusp.  (In the global parametrix construction this will correspond to a cutoff function $\psi\in C_0^\infty(\bbar{X})$.)  This $\psi$ is also independent of $\delta$, and we
require that $\psi=1$ on some neighbourhood of the support of $1-\chi$.
\item   The horizontal and vertical cutoffs $\phhor \in\cinf_0(\bbR^{n-k_0})$ and $\phvert_\delta \in \cinf(X_c)$. 
Here $\phhor$ depends only on $\abs{y}$ and is equal to $1$ for $\abs{y} \le R+1$ and $0$ for $\abs{y} \ge R+2$.
The function $\phvert_\delta$ depends on $(x,y)$, and its support is a bit more complicated, to accommodate the geometry of the cusp neighbourhood:  we assume that $\phvert_\delta = 1$ on $\{x\le \delta\} \cup \{x^2+\abs{y}^2 \le R\}$, and $\phvert_\delta = 0$ on $\{x\ge 2\delta\} \cap \{x^2+\abs{y}^2 \ge R+1\}$.  Furthermore, we assume that $\phvert_\delta$ depends only on $x$ for $R+1 \le \abs{y} \le R+2$.
(That is, the cutoff $\phvert_\delta$ is actually vertical where the supports of $\nabla \phhor$ and $\phvert_\delta$ intersect.)
\end{enumerate}

The structure of the supports of these cutoffs is illustrated in Figure~\ref{cutoffs.fig}.
As in the regular case, we assume that $\phhor$ and $\phvert_\delta$ satisfy quasi-analytic estimates of the form \eqref{QAE}.
The assumptions above guarantee in particular that $1-\phhor\phvert_\delta$ is supported in $\{x^2+\abs{y}^2 \ge R\}$ and equal to 1 on the support of $\chi$.  Furthermore, we have the horizontal/vertical decomposition of the commutator,
\[
[\Delta_{X_c}, \phvert_\delta\phhor] = x^2\phvert_\delta [\Delta_F, \phhor] + \phhor [\Delta_{X_c}, \phvert_\delta].
\]
The first term on the right is supported in $x\le 2\delta$, and the second is compactly supported in the interior, so we preserve the essential properties from the regular case. 

\begin{figure}
\begin{center}
\begin{overpic}{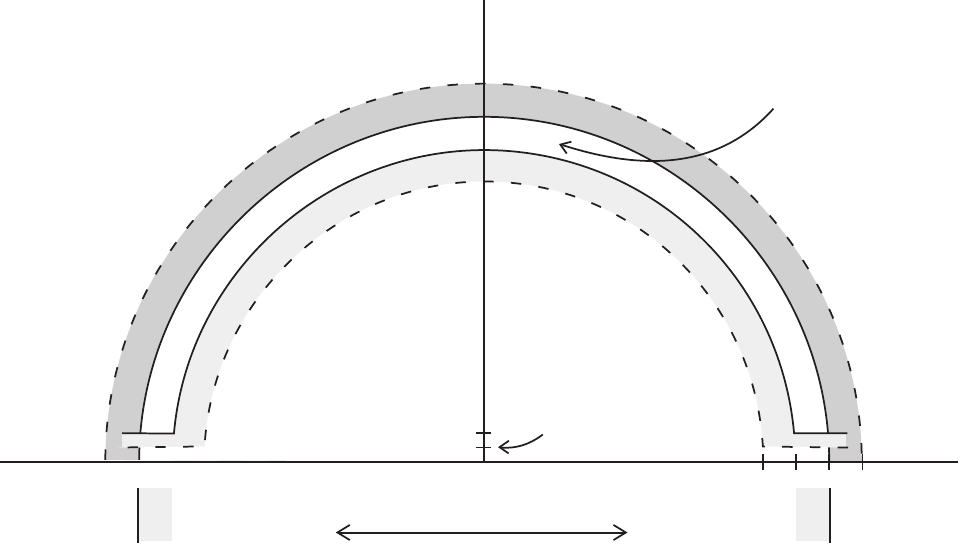}
\put(78,5){$\scriptstyle R$}
\put(57,11){$\scriptstyle\delta$}
\put(46,3){$\phhor = 1$}
\put(1,3){$\phhor = 0$}
\put(88,3){$\phhor = 0$}
\put(8,49){$\psi=1$}
\put(55,49){$\chi=1$}
\put(55,30){$\phvert_\delta=1$}
\put(81,45){$\begin{cases} \chi=0\\ \phvert_\delta=1\end{cases}$}
\put(99,10){$\scriptstyle y$}
\put(51,56){$\scriptstyle x$}
\end{overpic}
\end{center}
\caption{Structure of cutoffs in the model cusp neighbourhood (with the $z$ coordinate suppressed).  The supports of $\nabla\phhor$ and $\nabla\phvert_\delta$ are light gray and the support of $\nabla\chi$ is dark gray.  The entire picture is contained within the support of $\psi$.}\label{cutoffs.fig}
\end{figure}

\bigbreak
Our initial parametrix is $(1-\phvert_\delta\phhor) R_{X_c}(s)\chi$, which satisfies
\[
\begin{gathered}
(\Delta_{X_c}-s(n-s))(1-\phvert_\delta\phhor) R_{X_c}(s)\chi  = 
\chi + K_{X_c,0}(s)+L_{X_c,0}(s), \\
 L_{X_c,0}(s):=-\phhor [\Delta_{X_c},\phvert_\delta]R_{X_c}(s)\chi, \\ 
 K_{X_c,0}(s):=-x^2 \phvert_\delta [\Delta_F,\phhor]R_{X_c}(s)\chi.
\end{gathered}
\]
From the fact that $\chi\nabla (\phvert_\delta\phhor)=0$ and \cite[Prop 5.3]{GM:2012},  
the Schwartz kernel of $K_{X_c,0}(s)\psi$, and $L_{X_c,0}(s)\psi$ satisfy
\[K_{X_c,0}(s)\psi \in x^{s+2}{x'}^sC_0^\infty(\bbar{X}_c\x\bbar{X}_c),\quad 
L_{X_c,0}(s)\psi \in x^{\infty}{x'}^sC_0^\infty(\bbar{X}_c\x\bbar{X}_c).\] 
The $L_{X_c,0}(s)\psi$ term is already in any Schatten class
on $\rho^NL^2(B_0,dg_0)$ for $\re s>n/2-N$, but the $K_{X_c,0}(s)\psi$ term is not, thus we need to 
improve the parametrix construction. 

Our basis for the estimates of boundary terms will be Lemma~\ref{QAEforRXc}.  To apply that result, set $\psi_2 = \chi\psi$
and let $\psi_1$ be some cutoff such that $\psi_1 = 1$ on the support of $[\Delta_{X_c},\phvert_\delta \phhor]$. 
Then we have, as Schwartz kernels,
\[
\psi_1 R_{X_c}(s;\cdot, \cdot) \chi\psi = F(s;\cdot, \cdot),
\]
satisfying the estimates given in Lemma~\ref{QAEforRXc}.  
Notice that $x \le 2\delta$ in the support of $\phvert_\delta [\Delta_F,\phhor]$.
We can then apply the boundary expansion of Lemma \ref{Mellcusp} to $x^{-s}F(s)$:
\[
-x^2 \phvert_\delta [\Delta_F,\phhor] x^{-s}F(s) = 
-  \phvert_\delta [\Delta_F,\phhor] \sum_{\ell=0}^{N-1}x^{s+2\ell+2}M_\ell(s)\chi\psi -
\phvert_\delta [\Delta_F,\phhor] F_{N}(s),
\]
where $F_{N}(s)$ comes from the remainder term from the Taylor expansion of $x^{-s}F(s)$ at $x=0$ and 
the operators $M_\ell(s)$ are considered as Schwartz kernels.

The next step is to apply Lemma~\ref{f.abc} to the boundary terms of the form
\begin{equation}\label{Deffjcusp}
f_j(y,z;w'):= [\Delta_F,\phhor(y)]M_{j-1}(s;y,z,w')\chi(w'),
\end{equation} 
with $w'\in X_c$ viewed as a parameter, there exist for some differential operators $\mc{A}_{j,N}(s), \mc{B}_{j,N}(s)$
with smooth coefficients on $\bbR^+\x F$, such that for $j=1,\dots,N$,
\[
(\Delta_{X_c}-s(n-s)) x^{s+2j}\mc{A}_{j,N}(s)f_j = 
x^{s+2j}f_j + x^{s+ 2N+2}\mc{B}_{j,N}(s)f_j,
\] 
 where the term on the right-hand side has Schwartz kernel in 
$x^{s+2N+2}{x'}^sC_0^\infty(\bbar{X}_c\x \bbar{X}_c)$.
Furthermore,
\[\frac{\mc{A}_{j,N}(s)}{\Gamma(s-n/2+j)} \textrm{ and }\frac{\mc{B}_{j,N}(s)}{\Gamma(s-n/2+j)}\textrm{ are holomorphic in }s.\]

To conclude, we set 
\[
Q_{X_c,N}(s) := (1-\phvert_\delta\phhor) R_{X_c}(s)\chi - \phvert_\delta\sum_{j=1}^{N}x^{s+2j} \mc{A}_{j,N}(s) 
[\Delta_F,\phhor]M_{j-1}(s)\chi,
\]
which leads to an error term,
\begin{equation}\label{ENcusp}
\begin{split}
E_{X_c,N}(s) &:=  -\phvert_\delta [\Delta_F,\phhor] F_{N}(s) - \phvert_\delta\sum_{j=1}^{N}x^{s+2N+2}\mc{B}_{j,N}(s) [\Delta_F,\phhor]M_{j-1}(s)\chi \\
&\qquad - [\Delta_{X_c},\phvert_\delta] \left(\phhor F(s)
+ \psi_1\sum_{j=1}^{N}x^{s+2j}\mc{A}_{j,N}(s) [\Delta_F,\phhor]M_{j-1}(s)\chi \right). \\
\end{split}
\end{equation}

\begin{prop}\label{paracusp}
Let $\chi,\psi$ be cutoff functions as explained just above. Let $N\in\nn$ be large, then the operators $Q_{X_c,N}(s),E_{X_c,N}(s)$, defined above, satisfy
\[ (\Delta_{X_c}-s(n-s))Q_{X_c,N}(s)=\chi+E_{X_c,N}(s),\] 
with $Q_{X_c,N}(s),E_{X_c,N}(s)$ meromorphic in $\{\re s>n/2-N\}$ with simple poles at $k_0/2-k$ with $k\in \nn_0$
and with residue an operator of rank $\mc{O}(k^{n-k_0})$. 
The operator $\psi Q_{X_c,N}(s)\psi :\rho^NL^2(X_c)\to \rho^{-N}L^2(X_c)$ is bounded
 for $\re s>n/2-N$ and $s\notin (n/2-\nn/2)$, and the Schwartz kernel 
of the error term $E_{X_c,N}(s)\psi$ can be written as
\[
E_{X_c ,N}(s;\omega,\omega')\psi(\omega') =  [\Delta_{X_c},\phvert_\delta] h_{\rm cpt}(s;\omega,\omega') + \phvert_\delta x^{s+2N+2} h_N(s;\omega,\omega'),
\]
where $h_{\rm cpt}, h_N$ are  smooth functions defined in $\supp( \phhor \nabla\phvert_\delta)\x \supp(\chi\psi)$ and 
$\{x<3\delta\}\x\supp(\chi\psi)$, respectively.   On these domains they satisfy the derivative bounds 
\begin{equation}\label{EN.bdcusp}
\begin{gathered}
\norm{\pl_{w}^\alpha h_N(s;\cdot,\cdot) {\rho'}^N}_{L^\infty} \le 
C^{\abs{\alpha}+N} N^{\abs{\alpha}} e^{C\brak{s}} e^{\Lambda_{\Gamma^c}([\re s]_-)}\delta^{-[\re s]_-}, \\
\norm{\pl_{w}^\alpha h_{\rm cpt}(s;\cdot,\cdot) {\rho'}^N}_{L^\infty} \le 
C_\delta^{\abs{\alpha}} N^{\abs{\alpha}}  e^{\Lambda_{\Gamma^c}([\re s]_-)} \delta^{-2[\re s]_-}e^{C(|\im s|+N)-c\re s}
\end{gathered}
\end{equation}
for $s\in U_N\cap\{{\rm dist}(s,n/2-\nn/2)>\eps\}$, $\abs{\alpha}\leq 6N$, and where $C$ is independent of $s,N,\alpha,\delta$, $C_\delta$ independent of $s,N,\alpha$.
\end{prop}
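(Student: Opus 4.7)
The plan is to follow the layout of Proposition~\ref{parareg}, replacing $R_0(s)$ by the cusp resolvent $R_{X_c}(s)$ and the expansion of Lemma~\ref{expleft} by its cusp analogue Lemma~\ref{Mellcusp}, with the extra $\Lambda_{\Gamma^c}([\re s]_-)$-growth entering through Proposition~\ref{RXc.estimate} and Lemma~\ref{QAEforRXc}. First I would verify the identity $(\Delta_{X_c}-s(n-s))Q_{X_c,N}(s)=\chi+E_{X_c,N}(s)$ by direct computation: the piece $(1-\phvert_\delta\phhor)R_{X_c}(s)\chi$ contributes $\chi+K_{X_c,0}(s)+L_{X_c,0}(s)$, and applying Lemma~\ref{Mellcusp} to $x^{-s}F(s)$ (valid in $\{x\le 2\delta\}$, which contains the support of $\phvert_\delta[\Delta_F,\phhor]$) exposes the leading boundary part of $K_{X_c,0}(s)$; this is exactly canceled, modulo $O(x^{s+2N+2})$ remainders and interior commutator terms, by the sum $\phvert_\delta\sum x^{s+2j}\mc{A}_{j,N}(s)[\Delta_F,\phhor]M_{j-1}(s)\chi$ built via Lemma~\ref{f.abc}, leaving precisely the three groups of terms in \eqref{ENcusp}.

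For the pole structure, the first piece of $Q_{X_c,N}$ inherits its poles from $R_{X_c}(s)$, placed at $k_0/2-\bbN_0$ with residue ranks $O(k^{n-k_0})$ by Proposition~\ref{RXc.estimate}; for the sum, the quotients $\mc{A}_{j,N}/\Gamma(s-n/2+j)$ and $\mc{B}_{j,N}/\Gamma(s-n/2+j)$ are holomorphic, so all poles come from $\Gamma(s-n/2+j)M_{j-1}(s)$, and the residue form \eqref{Ress=k0-k} from Lemma~\ref{Mellcusp} has the crucial property that the factor $v_{k,i}$ is independent of $\ell$. Hence summing over $j$ preserves the $O(k^{n-k_0})$ rank bound, exactly as in the regular case. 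The $L^2$-boundedness $\psi Q_{X_c,N}(s)\psi:\rho^NL^2\to\rho^{-N}L^2$ then reduces to Proposition~\ref{RXc.estimate} plus direct inspection of the finite boundary sum. The splitting $E_{X_c,N}(s)\psi=[\Delta_{X_c},\phvert_\delta]h_{\rm cpt}+\phvert_\delta x^{s+2N+2}h_N$ is read off \eqref{ENcusp}, with $h_{\rm cpt}$ collecting the two terms attached to $[\Delta_{X_c},\phvert_\delta]$ and $h_N$ equal to $x^{-s-2N-2}$ times the remaining two boundary pieces.

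The heart of the proof is establishing the derivative bounds \eqref{EN.bdcusp} on $h_N$ and $h_{\rm cpt}$. For $h_N$, the region $\{x\le 3\delta\}$ lets me invoke \eqref{F3d.bd} of Lemma~\ref{QAEforRXc}, which supplies exactly the $e^{\Lambda_{\Gamma^c}([\re s]_-)}\delta^{-[\re s]_-}$ factors demanded; the $F_N(s)$-summand is controlled via the integral form of the Taylor remainder of $x^{-s}[\Delta_F,\phhor]F(s)\psi$ at $x=0$, and the $\mc{B}_{j,N}$-summand by feeding the quasi-analytic estimates on $f_j$, obtained by combining the explicit structure of $M_{j-1}(s)$ from Lemma~\ref{Mellcusp} with the quasi-analytic bounds \eqref{QAE} on the cutoffs, into Lemma~\ref{f.abc}. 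For $h_{\rm cpt}$, the support of $\nabla\phvert_\delta\phhor$ lies either in $\{\delta\le x\le 2\delta\}$ or in a compact interior region bounded away from $\{x=0\}$, so \eqref{Fd.bd} of Lemma~\ref{QAEforRXc} is the correct input; the extra $\delta^{-[\re s]_-}$ in the stated bound arises from the factor ${x'}^{\re s}{\rho'}^N$ on the $\chi\psi$ side where $x'$ may be small. The main technical obstacle is the clean bookkeeping of the $(\delta,N,s)$-dependencies through this chain of operations, and in particular keeping the $\delta$-independent constant $C$ in the $h_N$-bound separate from the $\delta$-dependent $C_\delta$ in the $h_{\rm cpt}$-bound, matching exactly the dichotomy between \eqref{F3d.bd} and \eqref{Fd.bd}.
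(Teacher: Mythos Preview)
Your plan is essentially the paper's proof, and the overall architecture is correct: identity from the construction preceding \eqref{ENcusp}, pole structure via Proposition~\ref{RXc.estimate} and the $\ell$-independence of $v_{k,i}$ in \eqref{Ress=k0-k}, and the splitting of $E_{X_c,N}(s)\psi$ read off \eqref{ENcusp}.

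One point needs adjustment. You propose to obtain the quasi-analytic estimates on $f_j$ by ``combining the explicit structure of $M_{j-1}(s)$ from Lemma~\ref{Mellcusp} with the quasi-analytic bounds \eqref{QAE} on the cutoffs.'' Lemma~\ref{Mellcusp} gives no pointwise derivative bounds on $M_{j-1}(s)$, only an operator formula in terms of $K_{s-n/2}(x'\sqrt{\Delta_F})$, and extracting uniform quasi-analytic estimates in $(y,z)$ from that formula would be a separate and nontrivial piece of work. The paper's route is simpler and is in fact the one you already use for $F_N(s)$: the $f_j(s;y,z,w')$ are precisely the Taylor coefficients of $x^{-s}[\Delta_F,\phhor]F(s;w,w')$ at $x=0$, so the quasi-analytic bounds \eqref{F3d.bd} on $x^{-s}F$ transfer directly to the $f_j$ (and to the remainder $F_N$) by Cauchy estimates in a small complex $x$-disk. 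That is what you should feed into Lemma~\ref{f.abc}.

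A smaller point: for the $h_{\rm cpt}$ bound, \eqref{Fd.bd} supplies the $e^{-c\re s}$ for the $\phhor F(s)\psi$ term, but for the $\sum_j x^{s+2j}\mc{A}_{j,N}f_j$ term the decay in $\re s\ge n/2$ comes instead from the factor $x^{s+2j}$ evaluated on the support of $\psi_1[\Delta_F,\phhor]$, where $x\le 2\delta$; this gives $(2\delta)^{\re s}$, which for $\delta$ small dominates to produce the claimed $e^{-c\re s}$. You should make this explicit.
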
  
\begin{proof}
Since $f_j$ and $F_{N}$ are the Taylor coefficients and remainder, respectively, for the expansion of $x^{-s}F$ at $x=0$, the quasi-analytic estimates of $x^{-s}F(s)$ from Lemma~\ref{QAEforRXc} carry over to these terms.   We can then use the combination of Lemma~\ref{QAEforRXc} and Lemma \ref{f.abc}, together with the 
derivative bounds \eqref{derivex^s} applied with $s+2j$ instead of $s$, we deduce the claimed estimates on
\[
h_{\rm cpt}(s) := - \phhor F(s)\psi - \psi_1\sum_{j=1}^{N}x^{s+2j}\mc{A}_{j,N}(s) f_j(s) \psi
\]
which is well defined on $\supp( \phhor \nabla\phvert_\delta)\x \supp(\chi\psi)$ and 
\[
h_N(s) :=  -[\Delta_F,\phhor] x^{-s-2N-2}F_{N}(s)\psi 
-\sum_{j=1}^{N} \bigl(\mc{B}_{j,N} f_j \bigr)(s) \psi.
\]
Note that we can assume that $\psi_1$ is chosen so that $x \le 2\delta$ in the support of $\psi_1 [\Delta_{F}, \phhor]$,
so for $\re s \ge n/2$ we pick up an extra factor $\delta^{\re s}$ from the $f_j$ term in the $h_{\rm cpt}(s)$ estimate, which accounts for the $e^{-c\re s}$ term from the $j$ sum in $h_{\rm cpt}$ 
(assuming that $\delta$ is sufficiently small, we can assume that the decay of this term is dominated by the $e^{-c\re s}$ when $\re s>0$).
\end{proof}

\bigbreak
We also need a cusp neighbourhood version of Proposition~\ref{GZ.prop}:
\begin{prop}\label{GZ.propcusp}
For $\delta, \rho, \chi$ as above and for any $N\in \nn$
there exist meromorphic families $S_{X_c,N}(s)$, $K_{X_c,N}(s)$ and $L_{X_c,N}(s)$ of 
operators with poles at $k_0/2-\nn_0$ of finite rank, such that
\[
\begin{split}
\psi S_{X_c,N}(s)\psi &:\rho^NL^2(X_c)\to \rho^{-N}L^2(X_c),\\
K_{X_c,N}(s)\psi &:\rho^NL^2(X_c)\to \rho^NL^2(X_c), \\
L_{X_c,N}(s)\psi  &:\rho^NL^2(X_c)\to \rho^NL^2(X_c), \\
\end{split}
\] 
for $\re s > n/2-N$, where $\psi\in C_0^\infty(\bbar{X}_c)$, such that
\[(\Delta_{g_0}-s(n-s))S_{X_c,N}(s)=\chi+K_{X_c,N}(s)+L_{X_c,N}(s), 
\]
and, the following boundedness hold for $\re s>n/2-N$ and $s\notin k_0/2-\nn_0$

In addition, the operators $K_{X_c,N}(s)\psi, L_{X_c,N}(s)\psi$ 
are trace class in $\rho^NL^2(X_c)$ and for $\delta$ sufficiently small have singular values satisfying for some constants $c_\delta, C, B_\delta$ independent of $s,N$, 
\begin{equation}\label{cusp.muK}
\mu_j(K_{X_c,N}(s)\psi)\leq C e^{\Lambda_{\Gamma^c}([\re s]_-)} e^{-c_\delta N} j^{-2},  \quad j \ge1,
\end{equation}
for $s \in U_N\cap \{{\rm dist}(s, k_0/2-\nn_0)>\eps\}$, and
\begin{equation}\label{cusp.muL}
\mu_j(L_{X_c,N}(s)\psi)\leq  e^{\Lambda_{\Gamma^c}([\re s]_-)} \begin{cases} C_\delta^N & \text{for }j\ge 1,\\
Ce^{-cN} j^{-2}& \text{for }j \ge B_\delta N^n \end{cases}
\end{equation}
for $s \in U_N\cap \{{\rm dist}(s, k_0/2-\nn_0)>\eps\}$.  Moreover, assuming $\delta$ sufficiently small and $\gamma$ sufficiently large, for $s_N \ge \gamma N$ we have the estimate
\begin{equation}\label{normLcusp}
\norm{L_N(s_N)}_{\rho^NL^2}\leq e^{-c_{\delta} N}.
\end{equation}

The operators $K_{X_c,N}(s)$ and $L_{X_c, N}(s)$ have possibly poles at 
$s = k_0/2-k$ for each $k \in \bbN_0$, and the polar part in the Laurent expansion are
some operators of rank bounded by $\mc{O}(k^{n-k_0})$.
\end{prop}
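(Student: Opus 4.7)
The plan is to mimic the argument of Proposition \ref{GZ.prop}, with the parametrix from Proposition \ref{paracusp} playing the role of the regular parametrix and with all estimates picking up an extra factor $e^{\Lambda_{\Gamma^c}([\re s]_-)}$ from Lemma \ref{QAEforRXc} and Proposition \ref{RXc.estimate}. First I would take $Q_{X_c,N}(s)$ and $E_{X_c,N}(s)$ from Proposition \ref{paracusp}, so that
\[
(\Delta_{X_c}-s(n-s))Q_{X_c,N}(s) = \chi + E_{X_c,N}(s),
\]
where $E_{X_c,N}(s)\psi$ decomposes into a compactly supported commutator piece $[\Delta_{X_c},\phvert_\delta]h_{\rm cpt}(s)$ and a piece $\phvert_\delta x^{s+2N+2}h_N(s)$ localized near $\{x\le 2\delta\}$. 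The second piece is trace class with good singular value bounds, but the first piece is not, exactly as in the regular case.

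To fix this, I would apply the Cuevas--Vodev dimensional reduction trick. Choose $\tilde\chi\in C_0^\infty(\bbar{X}_c)$ independent of $\delta$ with $\tilde\chi=1$ on the support of $\phvert_\delta\phhor$, and apply Proposition \ref{paracusp} a second time to produce $\tilde Q_{X_c,N}(s)$ with error $\tilde E_{X_c,N}(s)$ for the cutoff $\tilde\chi$. Then set
\[
S_{X_c,N}(s) := Q_{X_c,N}(s) - \tilde Q_{X_c,N}(s)\,[\Delta_{X_c},\phvert_\delta]\,h_{\rm cpt}(s),
\]
so that
\[
(\Delta_{X_c}-s(n-s))S_{X_c,N}(s) = \chi + \phvert_\delta\, x^{s+2N+2} h_N(s) - \tilde E_{X_c,N}(s)\,[\Delta_{X_c},\phvert_\delta]\,h_{\rm cpt}(s).
\]
With the commutator now sandwiched between smooth kernels, I can take the distributional limit as $\phvert_\delta\to H_\delta := H(\sqrt{\delta}-x)$ in the $x$-variable, obtaining $K_{X_c,N}(s) = H_\delta F_N(s)$ with $F_N(s) = x^{s+2N+2}h_N(s)$, and
\[
L_{X_c,N}(s) = -\tilde E_{X_c,N}(s)\,[\Delta_{X_c},H_\delta]\,h_{\rm cpt}(s),
\]
where $[\Delta_{X_c},H_\delta]$ is a distribution of the form \eqref{comm.limit} supported on $\{x=\sqrt{\delta}\}$.

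For the singular value bound on $K_{X_c,N}(s)\psi$, I would use the weight estimates \eqref{brho.cusp} (namely $x/\rho \le 1$ and $\rho/x \le C/\delta$) to absorb $\rho^{-N}$ on the left and $\rho^N$ on the right into the $x^{s+2N+2}$ and ${x'}^s$ factors, losing only a power $(C\delta)^{N/2}$ for $s\in U_N$. Then the $h_N$ derivative bound in \eqref{EN.bdcusp}, applied up to order $2(n+1)$, lets me compare with the $(n+1)$-st power of a compact Dirichlet Laplacian in the total variables of $X_c$, whose eigenvalues satisfy \eqref{mujK}; this yields \eqref{cusp.muK} with the extra factor $e^{\Lambda_{\Gamma^c}([\re s]_-)}$ coming directly from \eqref{EN.bdcusp}. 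For $L_{X_c,N}(s)\psi$, I would factor it as $\sum B_j A_j$ where $A_j\rho^N: \rho^N L^2(X_c) \to L^2(\Sigma)$ with $\Sigma$ an $n$-dimensional slice $\{x=\sqrt{\delta}\}\cap\supp\phhor$ in the horosphere (of total dimension $n$, as required for the Weyl asymptotic $\mu_j(\Delta_\Sigma^{-l})\le C j^{-2l/n}$), and $\rho^{-N}B_j$ is estimated using the independence of $\tilde E_{X_c,N}$ from $\delta$. Inserting $\Delta_\Sigma^{-l}\Delta_\Sigma^l$ and choosing $l=N+n$ then gives \eqref{cusp.muL} by the same calculation $j^{-2N/n}N^{2N}\le e^{-bN}$ for $j\ge e^{bn/2}N^n$ as in Proposition \ref{GZ.prop}. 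The norm bound \eqref{normLcusp} follows from taking $l=0$ in the $A_j$ estimate and using $\delta^{3\re s/2}$ to dominate $e^{\Lambda_{\Gamma^c}([\re s]_-)}$ when $s_N\ge \gamma N$ and $\gamma$ is taken large enough.

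The main obstacle I anticipate is keeping track of the $\Lambda_{\Gamma^c}([\re s]_-)$ factor in all estimates and verifying that, for $s_N\ge \gamma N$ with $\gamma$ sufficiently large, this factor is dominated by the $(C\delta)^{N/2}$ gain from the weight manipulations and the $\delta^{3\re s/2}$ gain from $x^s {x'}^s$; this requires working exclusively in the sector $U_N$ where $\re s \ge -N/4$, so that $\Lambda_{\Gamma^c}([\re s]_-)$ is bounded by a polynomial times $N$ and can be absorbed. The pole structure and rank bounds transfer directly from Proposition \ref{paracusp} and Lemma \ref{Mellcusp}, since only the $\Gamma(s-n/2+\ell+1)M_\ell(s)$ and $R_{X_c,N}(s)$ factors contribute poles, each of rank $\mc{O}(k^{n-k_0})$ at $s=k_0/2-k$, and as in the regular case the $v_{k,i}$ in \eqref{Ress=k0-k} are independent of $\ell$ so the sum over $j$ does not enlarge the rank.
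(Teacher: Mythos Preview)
Your overall strategy matches the paper's: apply Proposition~\ref{paracusp} twice (once with $\chi$, once with a larger $\tilde\chi$), subtract to sandwich $[\Delta_{X_c},\phvert_\delta]$ between smooth kernels, then pass to a step-function limit and use the Cuevas--Vodev dimensional reduction on the resulting hypersurface $\Sigma$.  However, there is a real gap in your choice of $H_\delta$ and $\Sigma$.

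In the cusp neighbourhood, $\phvert_\delta$ is \emph{not} a function of $x$ alone: it equals $1$ on $\{x\le\delta\}\cup\{x^2+|y|^2\le R\}$ and $0$ on $\{x\ge 2\delta\}\cap\{x^2+|y|^2\ge R+1\}$ (see Figure~\ref{cutoffs.fig}).  Hence its step-function limit cannot be $H(\sqrt\delta-x)$; it is the characteristic function of a region whose boundary $\Sigma$ interpolates between a horizontal piece $\{x=\delta\}$ (where $R+1\le x^2+|y|^2\le R+2$) and a spherical piece $\{x^2+|y|^2=R\}$ (for $x\ge 2\delta$), as in Figure~\ref{Sigma.fig}.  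The commutator $[\Delta_{X_c},H_\delta]$ is then a distribution supported on this curved $\Sigma$, and to keep the quasi-analytic derivative bounds after restriction you need $\Sigma$ to be the graph of a function satisfying quasi-analytic estimates up to order $10N$.  Your slice $\{x=\sqrt\delta\}\cap\supp\phhor$ is incompatible with the cusp parametrix: a purely $x$-dependent $\phvert_\delta$ cannot satisfy the support conditions required in Proposition~\ref{paracusp}, so the parametrix identity would fail before you ever reach the limit.  Once you use the correct curved $\Sigma$, the dimensional reduction still works because $\dim\Sigma=n$, and the singular-value argument goes through exactly as you describe.

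Two smaller points.  First, your second cutoff $\tilde\chi$ should be another \emph{cusp-type} cutoff (the paper takes $\tilde\chi\in C^\infty(X_c)$ with $\tilde\chi=1$ on $\{x^2+|y|^2\ge R/2\}$), since Proposition~\ref{paracusp} is formulated for such cutoffs, not for $\tilde\chi\in C_0^\infty(\bbar{X}_c)$.  Second, for the norm bound \eqref{normLcusp} at $s_N\ge\gamma N$ you have $[\re s_N]_-=0$, so $\Lambda_{\Gamma^c}([\re s_N]_-)$ is just a constant; the decay actually comes from the factor $e^{-c\re s}$ in the $h_{\rm cpt}$ estimate of \eqref{EN.bdcusp}, combined with the analogous decay of $\tilde E_{X_c,N}(s)$, giving $\norm{L_{X_c,N}(s_N)}\le e^{CN-c\re s_N}$.
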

\begin{proof} 
The proof is similar to that of Proposition~\ref{GZ.prop}. 
Using the cutoff $\chi$ we first construct $Q_{X_c,N}(s),E_{X_c,N}(s)$ as in Proposition~\ref{paracusp}, satisfying 
\[
(\Delta_{X_c}-s(n-s))Q_{X_c,N}(s)=\chi+E_{X_c,N}(s).
\]
Then we introduce a second cutoff $\tilde\chi \in C^\infty(X_c)$ such that $\tilde\chi = 1$ on 
$\{x^2+\abs{y}^2 \ge R/2\}$, and perform the construction again to produce $\tilde{Q}_{X_c,N}(s),
\tilde{E}_{X_c,N}(s)$.  The original $Q_{X_c,N}(s)$ is then replaced by
\[
S_{X_c,N}(s;\cdot,\cdot) = Q_{X_c,N}(s;\cdot,\cdot) -  \tilde{Q}_{X_c,N}(s) [\Delta_{X_c},\phvert_\delta] x^{s} h_{\rm cpt}(s;\cdot,\cdot).
\]

The error terms become
$$
K_{X_c,N}(s;\cdot,\cdot) = \phvert_\delta x^{s+2N+2} h_N(s;\cdot,\cdot) 
$$
and 
$$
L_{X_c,N}(s; \cdot,\cdot) = - \tilde{E}_{X_c,N}(s) [\Delta_{X_c},\phvert_\delta] x^{s} h_{\rm cpt}(s;\cdot,\cdot),
$$
where $h_N$, $h_{\rm cpt}$ are defined in Proposition~\ref{paracusp}.  Now, we can replace 
$\phvert_\delta$ with $H_\delta$, the characteristic function of a region whose boundary $\Sigma$ 
is a hypersurface interpolating between the sets 
$\{x= \delta,\> R+1 \le x^2+\abs{y}^2 \le R+2 \} \cup \{x\ge 2\delta, \> x^2+\abs{y}^2 = R\}$ as illustrated in Figure~\ref{Sigma.fig}.  In order to
preserve the derivative estimates that follow from \eqref{QAE} in the case of smooth $\phvert_\delta$, we assume that 
$\Sigma$ is the graph over $\{\abs{y}^2\le R+2\}$ of a function satisfying quasi-analytic derivative bounds analogous to \eqref{QAE}, for derivatives of up to order $10N$.  
\begin{figure}
\begin{center}
\begin{overpic}{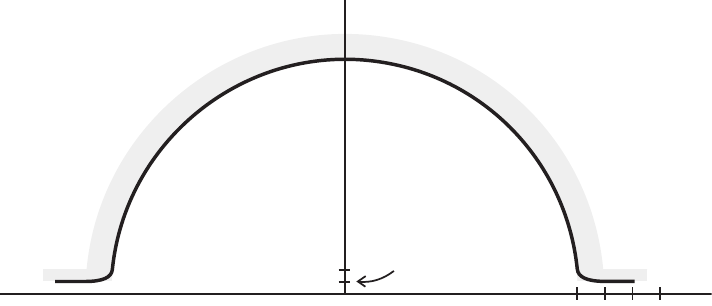}
\put(79,-2.5){$\scriptstyle R$}
\put(56,4){$\scriptstyle \delta$}
\put(99,3.5){$\scriptstyle y$}
\put(49,41){$\scriptstyle x$}
\put(66,22){$\Sigma$}
\end{overpic}
\end{center}
\caption{The hypersurface used to define $H_\delta$ (with the original support of $\nabla \phvert_\delta$ in gray).}\label{Sigma.fig}
\end{figure}

Analysis of these terms now works essentially as in the proof of Proposition~\ref{GZ.prop}.  
For the $K_{X_c,N}(s)$ we insert $\Delta_w^{n+1}$ and derive the decay of the singular values from \eqref{mujK}.  
From the prefactor $x^{s+2N+2}\rho^{-N}$ we gain an extra factor of $\delta^{\re s+N}$, for the support of $K_{X_c,N}(s)$ lies in $x \le 3\delta$.
In this way we obtain the estimate
\[
\mu_j(K_{X_c,N}(s)) \le C^N e^{C\brak{s}} \delta^{\re s + N-[\re s]_-} e^{\Lambda_{\Gamma^c}([\re s]_-)}.
\]
Taking $s \in U_N$ and choosing $\delta$ sufficiently small yields the factor $e^{-c_\delta N}$.
  
For the $L_{X_c,N}(s; \cdot,\cdot)$ term we use the fact that $\phhor [\Delta_{X_c}, H_\delta]$ is a distribution supported on a compact hypersurface $\Sigma$ to introduce a comparison to Dirichlet eigenvalues on $\Sigma$.  This hypersurface now has a more complicated geometry, 
but the assumption that $\Sigma$ is the graph of a quasi-analytic function allows us to estimate after inserting high powers 
of $\Delta_\Sigma$ just as before.  Beyond this, all that matters for the singular value estimate is that $\dim \Sigma= n$.

Finally, consider the norm estimate \eqref{normLsN}.  For $\re s$ large we have a factor of $e^{-c\re s}$ from the $h_{\rm cpt}(s)$ estimate in \eqref{EN.bdcusp}.  And $\tilde{E}_{X_c,N}(s)$ is built from components that also satisfy \eqref{EN.bdcusp}, i.e.~a component supported near the boundary that gains a factor $\delta^{\re s}$ from the support restriction and an interior component with decay like that of $h_{\rm cpt}(s)$.  Thus, assuming $\delta$ sufficiently small, we can estimate
\[
\norm{L_{X_c,N}(s_N)} \le e^{CN - c \re s_N},
\]
for real $s_N > n/2$. All the estimates have been done outside an $\eps$ neighbourhood of $n/2-\nn/2$, but by using the maximum principle and the holomorphy of the operators outside $k_0/2-\nn_0$, we deduce directly the bounds in 
$U_N\cap \{{\rm dist}(s, k_0/2-\nn_0)>\eps\}$. 
\end{proof}

\bigbreak
\section{Global parametrix construction}\label{gl.par.sec}

We return now to the global case of a geometrically finite quotient, $X=\Gamma\backslash\hh^{n+1}$.  For $j=1,\dots n_c$, we denote by  $\Gamma_j^c\subset \Gamma$  a finite set of representatives of conjugacy classes of  
parabolic subgroup, each one corresponding to a cusp of rank $k_j$. 
As we mentioned before, they can be assumed to be abelian after passing to a finite cover. 

For the covering outlined in \S\ref{setup.sec}, consisting 
of the relatively compact $\mc{U}_0$, regular boundary and cusps neighbourhoods $\{\mc{U}_j\}_{j\in J^r\cup J^c}$, we introduce a corresponding partition of unity, 
\[
1 = \chi_0 + \sum_{j\in J^r\cup J^c} \chi_j. 
\]
For a small parameter $\delta>0$ each $\chi_j$ with $j\in J^r$ is assumed to satisfy the local coordinate assumptions placed on $\chi_\delta$ in \S\ref{paramreg}.  Likewise, $\chi_j$ for  $j\in J^c$ is assumed to satisfy the assumptions placed on $\chi$ in \S\ref{paramcusp}.
We have a global cutoff $\psi \in C_0^\infty(\bbar{X})$, which we can assume to equal 1 within $\mc{U}_0$ and each
$\mc{U}_j$, $j\in J^r$.  In $\mc{U}_j$ for $j\in J^c$ we assume that $\psi = 1$ on the support of $1 - \chi_j$.

The boundary defining function $\rho$ is also assumed to satisfy the local ($\delta$-dependent) assumptions of Lemma \ref{constrho}.  Those assumptions were imposed in particular to insure that
\[
\rho \chi_0 =1, 
\]
so that the factors $\rho^{\pm N}$ have no impact on the interior parametrix term when considering operators in $L^2$ weighted space.

For the `near boundary' parametrix terms, we apply the constructions from Propositions~\ref{GZ.prop} and \ref{GZ.propcusp} to obtain
local parametrices such that for each $j \in J^c \cup J^r$,
\[
(\Delta_{X} - s(n-s)) S_{\mc{U}_j,N}(s) = \chi_j + K_{\mc{U}_j,N}(s) + L_{\mc{U}_j,N}(s),
\]
with operators having the properties given in those lemmas.

For the interior parametrix, we take  some $\hat\chi_0 \in \cinf_0(X)$ such that $\hat\chi_0 = 1$ on the support of $\chi_0$ while still $\rho \hat\chi_0 = 1$.  For $s_N\gg n$, which will be specified later, we define 
\begin{equation}
M_0(s_N):=\hat{\chi}_0R_X(s_N)\chi_0,
\end{equation}
where $R_X(s)=(\Delta_X-s(n-s))^{-1}$ is the resolvent in the physical half-space $\re s>\ndemi$. This gives 
\begin{equation}\label{errorcpt}
\begin{gathered}
(\Delta_X-s(n-s))M_0(s_N)=\chi_0+K_0(s,s_N) \,\,, \textrm{ with  }\\
K_0(s,s_N):=[\Delta,\hat{\chi}_0]R_X(s_N)\chi_0+ (s_N(n-s_N)-s(n-s))M_0(s_N).
\end{gathered}
\end{equation}

The full parametrix is
\[
M_N(s) := M_0(s_N) + \sum_{j\in J^r\cup J^c} S_{\mc{U}_j,N}(s), 
\]
which for $\re s \ge n/2-N$ satisfies,
\[
(\Delta_X-s(n-s))M_N(s) = I + E_N(s),
\]
where
\[
E_N(s)  := K_0(s,s_N) + K_N(s), 
\]
with
\[
K_N(s) := \sum_{j\in J^r\cup J^c} \Bigl[ K_{\mc{U}_j,N}(s) + L_{\mc{U}_j,N}(s) \Bigr].
\]
Under the assumption on $\psi$, then $\psi E_N(s) = E_N(s)$ and we can insert the cutoff to obtain
\[
(\Delta_X-s(n-s))M_N(s)\psi = \psi(I + E_N(s)\psi).
\]

We can estimate the singular values of $E_N(s)\psi$ using Propositions~\ref{GZ.prop} and \ref{GZ.propcusp}.  The cusp estimates contain extra factors related to Diophantine approximation, as introduced in \S\ref{Dioph.sec}.  To cover estimates for the full space we introduce
\[
\Lambda_X(u) := \sup_{j\in J^c} \Lambda_{\Gamma^c_j\backslash \hh^{n+1}}(u),
\]
where $\Lambda_{\Gamma^c}(s)$ was the growth function defined in \eqref{Lambda.def}.

\begin{lemma}\label{ENsN.norm}
For $s_N := aN$ with $a>0$ sufficiently large and independent of $N$,
\[
\norm{E_N(s_N)\psi}_{\rho^NL^2} \le \frac12.
\]
\end{lemma}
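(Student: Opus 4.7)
The plan is to evaluate at $s = s_N$, where the coefficient $(s_N(n-s_N) - s(n-s))$ in $K_0(s,s_N)$ vanishes identically, so
\[
E_N(s_N) = [\Delta_X,\hat\chi_0] R_X(s_N) \chi_0 \;+\; \sum_{j \in J^r \cup J^c} \bigl[K_{\mc{U}_j,N}(s_N) + L_{\mc{U}_j,N}(s_N)\bigr].
\]
Since the index set $J^r\cup J^c$ is finite, it is enough to bound each of the $2|J^r\cup J^c|+1$ summands separately by a quantity that can be made $\le 1/(2(2|J^r\cup J^c|+1))$.

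For the boundary pieces, the point $s_N = aN$ lies in $U_N$ (it is real, positive, and stays far from $-\nn_0$ and $k_0/2-\nn_0$ once $a$ is chosen large), and since $\re s_N>0$ we have $[\re s_N]_- = 0$, so the Diophantine weight $e^{\Lambda_{\Gamma^c}([\re s_N]_-)}$ in Proposition \ref{GZ.propcusp} collapses to $1$. The operator norms are then immediate from the already-proved singular-value bounds: $\|K_{\mc{U}_j,N}(s_N)\|_{\rho^N L^2} \le \mu_1(K_{\mc{U}_j,N}(s_N)) \le C e^{-c_\delta N}$ via \eqref{reg.muK} and \eqref{cusp.muK}, and $\|L_{\mc{U}_j,N}(s_N)\|_{\rho^N L^2} \le e^{-c_\delta N}$ via \eqref{normLsN} and \eqref{normLcusp}, provided $a$ exceeds both $2$ and the constant $\gamma$ appearing in Proposition \ref{GZ.propcusp}. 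Summing the finitely many terms yields a total boundary contribution of size $\mc{O}(e^{-c_\delta N})$.

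For the interior piece $[\Delta_X,\hat\chi_0] R_X(s_N)\chi_0$, the cutoffs $\hat\chi_0$ and $\chi_0$ are supported in the region where $\rho \equiv 1$, so its $\rho^N L^2$-operator norm coincides with its $L^2$-operator norm. I would estimate this by the spectral theorem applied to the selfadjoint operator $\Delta_X$: for $a$ large enough that $s_N(n-s_N)$ lies below $\inf \sigma(\Delta_X)$, one has $\|R_X(s_N)\|_{L^2\to L^2} \le 1/|s_N(n-s_N)| \le C/(aN)^2$, hence $\|\Delta_X R_X(s_N)\|_{L^2\to L^2} = \|I + s_N(n-s_N)R_X(s_N)\| \le 2$. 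The identity $\|\nabla u\|_{L^2}^2 = \langle \Delta_X u, u\rangle$ together with Cauchy--Schwarz then gives $\|\nabla R_X(s_N)\chi_0\|_{L^2\to L^2} \le C/(aN)$. Since $[\Delta_X,\hat\chi_0]$ is a first-order differential operator with smooth compactly supported coefficients, the interior term is bounded on $L^2$ by $C/(aN)$, and hence on $\rho^NL^2$ by the same quantity.

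Combining the two, $\|E_N(s_N)\psi\|_{\rho^N L^2} \le C_1/(aN) + C_2 e^{-c_\delta N}$. Fixing $a$ first, large enough to exceed $\max(2,\gamma)$ and to make $C_1/a \le 1/4$, and then taking $N$ large enough that the exponential contribution from the finite sum of boundary pieces is at most $1/4$, the total is at most $1/2$ as required. I do not expect a genuine obstacle: the singular-value estimates of Propositions \ref{GZ.prop} and \ref{GZ.propcusp} were engineered precisely to give exponentially small operator norms along the positive real axis once $s_N$ exceeds the relevant multiple of $N$, and the interior term is handled by routine spectral-theoretic manipulations. The only point requiring care is coordinating the single choice of $a$ with the various lower-bound thresholds ($s_N\ge 2N$, $s_N\ge\gamma N$, $s_N^2\ge 4C_1$) that arise from the boundary estimates and the interior term simultaneously.
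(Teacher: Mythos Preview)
Your proof is correct and follows essentially the same approach as the paper's: you split $E_N(s_N)$ into the boundary contribution $K_N(s_N)$, bounded exponentially via \eqref{reg.muK}, \eqref{normLsN}, \eqref{cusp.muK}, \eqref{normLcusp}, and the interior term $[\Delta_X,\hat\chi_0]R_X(s_N)\chi_0$, bounded by $\mc{O}(1/s_N)$ via the spectral theorem. Your treatment of the interior term is somewhat more explicit than the paper's (which simply invokes a ``standard spectral estimate''), but the argument is the same in substance.
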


\begin{proof}
For the $K_N(s)$ term, we see from \eqref{reg.muK}, \eqref{normLsN}, \eqref{cusp.muK}, and \eqref{normLcusp} give the estimate
\[
\norm{K_N(s_N)\psi}_{\rho^NL^2} \le Ce^{-cN},
\]
for $s_N$ as above.  And since $K_0(s_N,s_N) = [\Delta,\hat{\chi}_0]R_X(s_N)\chi_0$ we have the standard spectral estimate (recall that $\rho=1$ on the supports of $\hat{\chi}_0,\chi_0$),
\[
\norm{[\Delta,\hat{\chi}_0]R_X(s_N)\chi_0}_{\rho^NL^2} = \mc{O}(1/s_N).
\]
\end{proof}

In particular, with such a choice of $s_N$, $I - E_N(s_N)\psi$ is invertible, so $I - E_N(s)\psi$ is meromorphically invertible by the analytic Fredholm theorem, with poles of finite rank.  
This gives the (cutoff) resolvent as a meromorphic family in $\re s>n/2-N$   
\begin{equation}\label{Mpsi.Epsi}
R_{X_c}(s)\psi = M_N(s)\psi (I + E_N(s)\psi)^{-1}.
\end{equation}

\subsection{Determinant estimates}

We can now proceed to estimate the resonances by applying the Fredholm determinant method just as adapted by 
Guillop\'e-Zworski \cite{GZ:1995b}.  Note that $E_N(s)\psi$ is the sum of a pseudodifferential operator of order $-1$, with compactly supported coefficients, plus a smoothing operator which is trace class on $\rho^N L^2$ for $\re s> n/2-N$.  Hence $E_N(s)^{n+2}$ is trace class on $\rho^N L^2$ and we can form the determinant in this space
\[
D_N(s) := \det \left[I - \bigl(-E_N(s)\psi)^{n+2} \right].
\]
By Propositions~\ref{GZ.prop} and \ref{GZ.propcusp}, $E_N(s)\psi$ has possible finite order poles at 
$s = n/2-k/2$ for $k \in \bbN$, with the polar part in the Laurent expansion an operator of rank bounded by $\mc{O}(k^{n})$.
Thus, by \cite[Lemma~A.1]{GZ:1995b}, $D_N(s)$ has poles at $n/2-k/2$ for $k \in \bbN$ with orders bounded by 
$Lk^{n}$ for some  $L\in\nn$ independent of $k,N$.

To cancel these poles we introduce the canonical product
\[
g_L(s) := s^{L} \prod_{k\in\bbN} \prod_{\omega\in U_{2(n+1)}} E\left(-\omega \frac{2s}{k}, n+1\right)^{2L k^n},
\]
where $E(z,p) := (1-z) e^{z+\dots+z^p/p}$ and $U_m$ denotes the set of $m$-th roots of unity. The inclusion of rotations 
by roots of unity guarantees, 
by Lindel\"of's theorem \cite[Thm.~2.10.1]{Boas}, that $g_L$ is of finite type.  Thus, we have the order estimate:
\begin{equation}\label{g.order}
\log \abs{g_L(s)} = \mc{O}(\brak{s}^{n+1}).  
\end{equation}
By the choice of $L$ as indicated above, the function,
\[
h_N(s) := g_L(s) D_N(s),
\]
will be holomorphic for any $N$.

By \eqref{Mpsi.Epsi}, and the arguments used for \cite[Lemma~3.2]{GZ:1995b}, the resonances of $X$ are contained among the zeros $\zeta$ of $D_N(s)$, with multiplicities $m_\zeta(D_N)$, and the set $n/2-\bbN/2$ with multiplicity of 
$(n-k)/2$ for $k \in \bbN$ bounded by 
$\mc{O}(k^{n})$.  Hence to prove Theorem~\ref{maintheorem} it suffices to prove the corresponding estimate for the zeros of $h_N(s)$.

To estimate the growth of $h_N(s)$ we introduce a combined Diophantine growth function,
\[
\Lambda_X(u) := \max_{j\in J^c} \Lambda_{\Gamma^c_j\backslash \hh^{n+1}}(u),
\]
where $\Lambda_{\Gamma^c}(s)$ was the growth function defined in \eqref{Lambda.def}.
\begin{prop}\label{hN.growth}
For $s \in U_N$ with $\abs{s} \le CN$ and such that $\Lambda_X([\re s]_-) \le N$, we have
\[
|h_N(s)| \le e^{CN^{n+1}}.
\]
\end{prop}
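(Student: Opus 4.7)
The plan is to write $h_N(s) = g_L(s) D_N(s)$ and estimate each factor separately. The order bound \eqref{g.order} on the canonical product immediately gives $\log|g_L(s)| \le C\brak{s}^{n+1} \le CN^{n+1}$ under the hypothesis $|s|\le CN$, so it suffices to prove $\log|D_N(s)| = \mc{O}(N)$.

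To bound $\log|D_N(s)|$ I use the Fredholm determinant inequality $|\det(I-A)| \le \prod_j (1 + \mu_j(A))$, applied to $A = (-E_N(s)\psi)^{n+2}$, together with $\log(1+x) \le x$ and Horn's inequality for powers of one operator, $\mu_{(n+2)(k-1)+1}(B^{n+2}) \le \mu_k(B)^{n+2}$. This yields
$$\log |D_N(s)| \le (n+2) \sum_k \mu_k(E_N(s)\psi)^{n+2}.$$
Now decompose $E_N(s)\psi = K_0(s,s_N) + K_N(s)\psi$, where $K_N(s) := \sum_{j\in J^r\cup J^c} [K_{\mc{U}_j,N}(s) + L_{\mc{U}_j,N}(s)]$ collects the boundary parametrix errors. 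Using $\mu_{2k-1}(A+B) \le \mu_k(A) + \mu_k(B)$ and the elementary $(a+b)^{n+2} \le 2^{n+1}(a^{n+2}+b^{n+2})$, the task reduces to bounding $\sum_k \mu_k(K_0(s,s_N))^{n+2}$ and $\sum_k \mu_k(K_N(s)\psi)^{n+2}$ separately.

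The interior term $K_0(s,s_N) = [\Delta,\hat\chi_0]R_X(s_N)\chi_0 + (s_N(n-s_N)-s(n-s))M_0(s_N)$ has a smoothing first summand (disjoint supports of $\nabla\hat\chi_0$ and $\chi_0$), and a second summand for which standard elliptic/Weyl estimates give $\mu_k(M_0(s_N)) \le C k^{-2/(n+1)}$, uniformly for $s_N$ real and large, while the scalar prefactor is $\mc{O}(N^2)$. Raising to the $(n+2)$-th power yields a convergent series (since $2(n+2)/(n+1)>2$) of size $\mc{O}(N^{2(n+2)})$, hence logarithmic contribution $\mc{O}(\log N)$. For $K_N(s)\psi$, Propositions~\ref{GZ.prop} and \ref{GZ.propcusp} supply singular value bounds in which the Diophantine factor $e^{\Lambda_X([\re s]_-)}$ is at most $e^N$ by hypothesis. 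The $K_{\mc{U}_j,N}$ contributions sum to $\mc{O}(e^{-cN})$ once $\delta$ is chosen small enough. The $L_{\mc{U}_j,N}$ terms I split at $k = B_\delta N^n$: the $\mc{O}(N^n)$ low indices each satisfy $\mu_k \le (eC_\delta)^N$, contributing a total $\le B_\delta N^n (eC_\delta)^{(n+2)N}$ of log order $\mc{O}(N)$, while the high-index tail sums to $\mc{O}(e^{-cN})$ because of the $k^{-2}$ factor.

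Putting these estimates together gives $\log|D_N(s)| \le CN$, and hence $\log |h_N(s)| \le CN^{n+1}$. The main subtlety, which also motivates the very definition of $D_N$, is the slow singular-value decay of the interior resolvent term: $k^{-2/(n+1)}$ is only just summable after raising to the $(n+2)$-th power, which is exactly the exponent used to form $D_N$. A secondary point is that the $\delta$-dependent constants $C_\delta$ appear linearly in the exponent (as $C_\delta^N$), producing only $\mc{O}(N)$ contributions to the log-determinant, which are dominated by the $\mc{O}(N^{n+1})$ coming from $g_L$.
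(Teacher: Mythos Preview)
Your overall strategy matches the paper's, but there is a genuine gap in the determinant estimate.  Having established
\[
\log|D_N(s)| \le (n+2)\sum_k \mu_k(E_N(s)\psi)^{n+2},
\]
you then compute that the low-index $L$-terms contribute $B_\delta N^n (eC_\delta)^{(n+2)N}$ to the right-hand side and describe this as ``of log order $\mc{O}(N)$''.  But the right-hand side \emph{is} your bound for $\log|D_N(s)|$; you are not entitled to take another logarithm.  What you have actually proved is $\log|D_N(s)|\le e^{CN}$, which is useless.  The same slip occurs for the interior term: your sum $\sum_k\mu_k(K_0)^{n+2}=\mc{O}(N^{2(n+2)})$ is a bound on $\log|D_N(s)|$, not on something whose log you may take, so ``logarithmic contribution $\mc{O}(\log N)$'' is simply false.

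The problem is that the crude inequality $\log(1+x)\le x$ throws away far too much when $x$ is exponentially large.  For the $\mc{O}(N^n)$ low-index singular values with $\mu_k\le e^{CN}$ you should instead use $\log(1+\mu_k^{n+2})\le 1+(n+2)\log\mu_k\le CN$, which summed over $\mc{O}(N^n)$ indices gives $\mc{O}(N^{n+1})$; for the high indices your $\log(1+x)\le x$ estimate is fine.  For the interior term, the same splitting (or the integral estimate $\int_1^\infty\log(1+b k^{-p})\,dk\asymp b^{1/p}$ with $p=2(n+2)/(n+1)$) gives $\mc{O}(\brak{s}^{n+1})=\mc{O}(N^{n+1})$ rather than $\mc{O}(N^{2(n+2)})$.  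With these corrections you recover the paper's bound $\log|D_N(s)|\le CN^{n+1}$, which is exactly what is needed; your claimed $\mc{O}(N)$ bound for $\log|D_N(s)|$ is both unattainable by this method and unnecessary.
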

\begin{proof}
For the error terms $K_{\mc{U}_j,N}(s)$ and $L_{\mc{U}_j,N}(s)$ we have singular value estimates from Propositions~\ref{GZ.prop} and \ref{GZ.propcusp} for $s \in U_N$.  Under the extra assumption that $\Lambda_X([\re s]_-) \le N$, these can be combined via the Fan Inequalities for singular values, to give
\[
\mu_l(K_N(s)\psi) \le
\begin{cases}
e^{CN}, & \\
Ce^{-cN} l^{-2}, &\text{for } l \ge BN^n.
\end{cases}
\]
A simple estimate based on Weyl's determinant inequality then shows that
\[
\det \left( 1 + \abs{K_N(s)\psi} \right) \le e^{CN^{n+1}}.
\]

The interior error term $K_0(s,s_N)$ is the sum of two compactly supported components.  
Since $[\Delta,\hat{\chi}_0]R_X(s_N)\chi_0$ is order $-1$ and $M_0(s_N)$ has order $-2$, comparison to the Laplacian on a a compact domain gives the basic estimate
\[
\mu_k(K_0(s,s_N)) = \mc{O}(k^{-1/(n+1)})+ \mc{O}(\brak{s}^2 k^{-2/(n+1)}).
\]
In this case Weyl's estimate gives
\[
\det\left( 1+ \abs{K_0(s,s_N)}^{n+2} \right) \le e^{C\brak{s}^{n+1}}.
\]

The two determinant estimates are combined using \cite[Lemma~6.1]{GZ:1995a} to give the growth estimate
\[
\abs{D_N(s)} \le e^{CN^{n+1}},
\]
for $s \in U_N$ with $\abs{s} \le CN$ and $\Lambda_X([\re s]_-) \le N$.  In conjunction with \eqref{g.order}, this proves the growth estimate for $h_N(s)$. 
\end{proof}

\subsection{Proof of Theorem \ref{maintheorem}}
To apply Jensen's formula we also need a lower bound on $h_N(s_N)$ for $s_N := a N$.  
From \cite[Lemma~5.1]{GZ:1995b} we already know that
\[
\log \abs{g_L(s)} \ge - C_\vep \brak{s}^{n+1},
\]
for $\dist(s, U_{2(n+1)}\cdot\bbN) > \vep$.  The norm bound from Lemma \ref{ENsN.norm} implies that $D_N(s_N)$ is bounded below by
a constant independent of $N$.  Thus we have the lower bound
\begin{equation}\label{hNsN.bound}
\log\abs{h_N(s_N)} \ge - CN^{n+1}.
\end{equation}

\begin{figure}
\begin{center}
\begin{overpic}{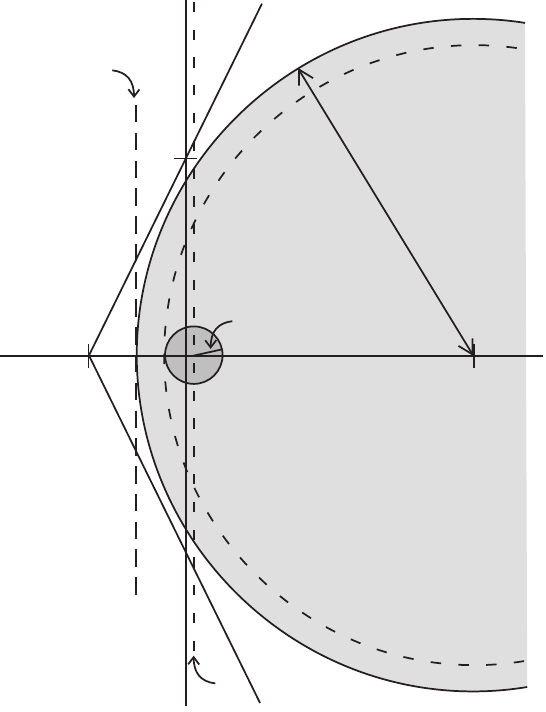}
\put(2,45){$\scriptstyle -N/4$}
\put(65,46){$\scriptstyle s_N$}
\put(52,76){$\scriptstyle s_N+2T$}
\put(33.5,54){$\scriptstyle T$}
\put(30.5,3){$\tfrac{n}2$}
\put(-8,89){$\scriptstyle \Lambda_X(\re s) = N$}
\end{overpic}
\end{center}
\caption{The big disk in which Jensen's inequality is applied, centered at $s_N$, containing the disk of radius $T$ centered at $n/2$ in which the resonances are counted.}\label{Jensen.fig}
\end{figure}
For $T>1$ large, we take $N$ to be  $N:=[\Lambda_X(2T)]$, so that the region covered by Proposition~\ref{hN.growth} includes a disk of radius $s_N+2T$ centered on $s_N$, as illustrated in Figure~\ref{Jensen.fig}.   
Note that $\Lambda_X(u) \ge c\brak{u} \log \brak{u}$, so that $T$ will be $o(N)$.
Let $\mc{N}(f; s_0,t)$ denote the number of zeros of $f(s)$ with $\abs{s - s_0} \le t$.  Using Proposition~\ref{hN.growth} and \eqref{hNsN.bound}, applying Jensen's formula to the big disk centered at $s_N$ gives
\[
\begin{split}
\int_0^{s_N+2T} \frac{\mc{N}(h_N; s_N,t)}{t}\>dt & \le \max_{\abs{s - s_N} = s_N+2T} \log \frac{\abs{h_N(s)}}{\abs{h_N(s_N)}} \\
&\le CN^{n+1}.
\end{split}
\]
We thus obtain the bound,
\[
\mc{N}(h_N; s_N,s_N+T) \le \frac{s_N+2T}{T}\int_{s_N+T}^{s_N+2T} \frac{\mc{N}(h_N; s_N,t)}{t}\>dt \le \frac{C N^{n+2}}{T}.
\]
Since a disk of radius $T$ centered at $n/2$ is contained in the region $\{\abs{s - s_N} \le s_N+T\}$, as shown in Figure~\ref{Jensen.fig},
this count gives the upper bound
\begin{equation}\label{NhN.bnd}
\mc{N}(h_N; n/2, T) = \mc{O}(N^{n+2}T^{-1}).
\end{equation}
As noted in the comments preceding Proposition~\ref{hN.growth},
\[
N_X(t) \le \mc{N}(h_N; n/2, t) + \mc{O}(t^{n+1}).
\]
Thus since $\Lambda_{X}(T)\gg T$ for large $T$, we get
\[
N_X(T) \le \frac{C(\Lambda_X(2T))^{n+2}}{T}.
\]

This completes the proof of Theorem~\ref{maintheorem}.  As a final remark, we note that that for a conformally compact quotient the same argument applies except that there is no need to restrict to $\Lambda_X([\re s]_-) \le N$ in Proposition~\ref{hN.growth}.  We could thus take $T \propto N$ in this case and then \eqref{NhN.bnd} would imply $N_X(T) = \mc{O}(T^{n+1})$.  As noted in the introduction, this is a new proof for the conformally compact case, in that it avoids the scattering determinant estimates used in \cite{Borthwick:2008}.

\subsection{Counting resonances in a strip}
Let us now consider the resonance count in a vertical strip:
\[
N_X(K,T) := \#\bigl\{s \in \calR_X \cap ([n/2-K,n/2]+i[0,T])\bigr\}.
\]
According to the fractal Weyl conjecture \cite{Sjostrand:1990, LSZ:2003}, we would expect $N_X(K,T)$ to satisfy a power law $\sim T^{1+\delta}$ as $T\to \infty$, with $\delta \in [0,n)$ the dimension of the limit set of $\Gamma$.  An upper bound with the expected exponent was proven for Schottky groups by Guillop\'e-Lin-Zworski \cite{GLZ:2004}, and recently extended to all convex cocompact $\Gamma$ (and even to non-constant curvature) by Datchev-Dyatlov \cite{DD:2012}.

Our estimate here is rather far from optimal.  The main point of interest is that it is independent of the Diophantine approximation problem.

\begin{prop}\label{Nstrip.prop}
For $K>0$ fixed, there exists $C_K$ depending on $K$ such that for all $T>1$
\[
N_X(K,T) \leq C_K T^{n+2}.
\]
\end{prop}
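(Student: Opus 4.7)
The plan is to re-apply the determinant and Jensen-formula argument used in the proof of Theorem~\ref{maintheorem}, exploiting the fact that $[\re s]_-\le K$ throughout the strip $\{n/2-K\le\re s\le n/2\}+i[0,T]$. This makes $\Lambda_X([\re s]_-)\le\Lambda_X(K)=:C_K$ a constant depending only on $K$ and independent of $T$ or the Diophantine character of the cusps, so the $T$-dependent scale $\Lambda_X(2T)$ that forced the choice $N=\lceil\Lambda_X(2T)\rceil$ in Theorem~\ref{maintheorem} is replaced by the fixed scale $C_K$, allowing me to take $N$ proportional to $T$.

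Concretely, I would set $N:=\lceil\alpha T\rceil$ with $\alpha$ a large fixed constant so that, for $T$ larger than some threshold $T_0(K)$, the strip lies in the sector $U_N$, satisfies $|s|\le CN$, and has $\Lambda_X([\re s]_-)\le C_K\le N$. Proposition~\ref{hN.growth} then yields $|h_N(s)|\le e^{CN^{n+1}}$ throughout the strip. Combined with the lower bound $\log|h_N(s_N)|\ge-CN^{n+1}$ at $s_N:=aN$ from \eqref{hNsN.bound}, Jensen's formula applied to $h_N$ on a disk centred at $s_N$ with inner radius $r=\sqrt{(s_N-n/2+K)^2+T^2}$ (containing the strip) and outer radius $R=r+\Delta$ only slightly larger gives
\[
\mc{N}(h_N;s_N,r)\le\frac{\log\bigl(\max_{|z-s_N|=R}|h_N(z)|/|h_N(s_N)|\bigr)}{\log(R/r)}\le\frac{CrN^{n+1}}{\Delta}.
\]
With $r\sim s_N\sim N\sim T$ and $\Delta$ of order unity this produces $CN^{n+2}\sim C_KT^{n+2}$, which combined with the $\mc{O}_K(1)$ contribution from possible poles of $E_N\psi$ at points $n/2-k/2$ with $k\le 2K$ delivers the claimed bound $N_X(K,T)\le C_KT^{n+2}$.

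The hard part will be controlling $|h_N|$ on the outer Jensen disk, which extends past the strip by a distance of order $T/(a\alpha)+\Delta$ into a region where $[\re s]_-$ is no longer bounded by $K$ and so $\Lambda_X([\re s]_-)$ may fail to be controlled by $C_K$. I would address this by revisiting the singular value estimates of Propositions~\ref{GZ.prop} and~\ref{GZ.propcusp}, whose dependence on $\Lambda_X([\re s]_-)$ enters only through an exponential prefactor multiplied against an $e^{-c_\delta N}$ decay; Weyl's inequality then yields the cruder determinant bound $|h_N(s)|\le\exp\bigl(CN^n\Lambda_X([\re s]_-)+CN^{n+1}\bigr)$ in a slight enlargement of the strip. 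Balancing $\alpha$ and $\Delta$ so that $N^n\Lambda_X(T/(a\alpha)+\Delta)\le CN^{n+1}$ (possibly letting $\alpha$ depend mildly on $T$) secures the required growth estimate and produces the bound $T^{n+2}$. This trade-off, forced by the lack of a priori control on $\Lambda_X$ away from the strip, is exactly why the resulting estimate is far from the expected fractal Weyl rate $T^{1+\delta}$.
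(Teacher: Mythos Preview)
Your core idea --- take $N$ proportional to $T$ and exploit that $\Lambda_X([\re s]_-)\le\Lambda_X(K)$ is a constant in the strip --- is exactly what the paper does. The gap is in the implementation. A disk centred at $s_N=a\alpha T$ and containing the segment $[n/2-K,n/2]+i[0,T]$ must have radius at least $\sqrt{(a\alpha T-n/2+K)^2+T^2}\approx a\alpha T+T/(2a\alpha)$, so its leftmost point sits at $\re s\approx -T/(2a\alpha)$. For any \emph{fixed} $\alpha$ this is a fixed fraction of $T$, not a bounded quantity; since $\Lambda_X(u)$ can grow arbitrarily fast (see \S\ref{Dioph.sec}), the balancing condition $\Lambda_X(T/(2a\alpha)+\Delta)\le C\alpha T$ you need cannot be secured in general. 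Letting $\alpha$ grow with $T$ inflates $N=\alpha T$ and the output of Jensen becomes $(\alpha T)^{n+2}$, not $T^{n+2}$. So the proposed fix in your last paragraph does not close the argument.

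The paper sidesteps this geometric obstruction by replacing Jensen on a disk with the Littlewood lemma on a \emph{rectangle} $[n/2-K-1,\,n/2+aN]+i[-\eps_0,T]$, applied to the function $\til D_N(s)=D_N(s)\prod_{j=1}^{K+1}(s-n/2+j)/(s+K+1)$ (which is holomorphic in $\re s>n/2-K-1$). The rectangle contains the strip yet stays entirely in $\{\re s\ge n/2-K-1\}$, so $[\re s]_-\le K+1$ throughout and $\Lambda_X$ never appears. The required inputs are the bound $\log|\til D_N(s)|\le CN^{n+1}$ on the rectangle (immediate from the proof of Proposition~\ref{hN.growth} once $\Lambda_X$ is bounded), the positivity $\re\til D_N(aN+it)>1/2$ along the right edge (from Lemma~\ref{ENsN.norm}, since $\norm{E_N(s_N)\psi}$ stays small for $s\in aN+i[0,\eps N]$), and a bound on $|\arg\til D_N|$ along the horizontal edges obtained from Jensen applied to small disks that remain in the half-plane. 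Setting $T=\eps N$ then yields $N_X(K,T)\le C_KT^{n+2}$.
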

\begin{proof}
First we remark that the estimates we have proved imply that $ \re(D_N(s))>1/2$
for $s\in aN+i[0,\eps N]$, if $\eps>0$ is small enough and $a$ is large (both independent of $N$).   We can obtain a holomorphic function in $\re s>n/2-K$ whose zeros contain the resonances with multiplicities in $\re s>n/2-K$ by setting
\[
\til{D}_N(s) := D_N(s) \prod_{j=1}^{K+1}\frac{s-n/2+j}{s+K+1}. 
\]
Moreover, $\re (\til{D}_N(s))>1/2$
for $s\in aN+i[0,\eps N]$, if $N$ is large, for $K$ fixed. Let $\eps_0\in(0,1)$ so that $\til{D}_N(s)$ has no zeros 
on $\{\im s=-\eps_0, \re s\in[n/2-K-1,n/2+aN]\}$.

Let $\nu(\sigma,T)$ be the number of zeros of $\til{D}_N(s)$ in the rectangle $[n/2+\sigma,n/2+aN]+i[-\eps_0,T]$ where $T\in [-\eps_0,\eps N]$ and $\sigma\in [-K,aN]$.  
Then the Littlewood Lemma (see \cite[\S9.9]{Tit} or \cite[Prop 4.1]{Naud:2012}) gives  us the bound 
\[ \begin{split}
\int_{-K-1}^{aN} \nu(\sigma,T)\>d\sigma &\leq  C \int_{0}^T \left( \log \abs{\til{D}_N(n/2-K-1+it)} +\log \abs{\til{D}_N(n/2+aN+it)}\right) dt \\
&\qquad + C \int_{-K-1}^{aN} \abs{\arg(\til{D}_N(\sigma+iT))}+\abs{\arg(\til{D}_N(\sigma-i\eps_0))} d\sigma.
\end{split}\]
The function $\sigma\to \nu(\sigma,T)$ is decreasing as $\sigma$ increases and the left-hand side of the inequality 
is thus bounded below by  $\nu(-K,T)$. The bound  
\[ \log|\til{D}_N(s)| \leq CN^{n+1}\]
holds in the rectangle $[n/2+\sigma,n/2+aN]+i[-\eps_0,T]$.    What remains is to get an estimate for the argument of 
$\til{D}_N(\sigma+iT)$ and $\til{D}_N(\sigma-i\eps_0)$. Using the proof of Lemma 9.4 in \cite{Tit}, 
this follows from Jensen's formula and the fact 
that  $\abs{\arg(\til{D}_N(aN+iT))} <\pi/2$ and $\abs{\arg(\til{D}_N(aN-i\eps_0))}<\pi/2$.  We thus obtain, by setting $T=\eps N$,
\[ N_X(K,T) \leq \nu(-K,T) \leq C_K T^{n+2}.\]
where $C_K$ depends only on $K$.
\end{proof}
 
\bigbreak
\appendix
\section{Solving away boundary terms}\label{bnd.solv}

In this appendix, we give the details of the estimates on the terms that appear when we solve away the leading terms in Taylor expansions at the boundary.  We begin with a boundary solution lemma that applies either to $\Hyp$ or to a model cusp $X_c$.  In either case we study a model space $X = \bbR_+ \times F$, where $F$ is Euclidean $\bbR^{n}$ in the case of $\Hyp$, and a flat bundle of rank $n-k$ over a $k$-dimensional torus in the case of a rank $k$ cusp.  The Laplacian is given by
\[
\Delta_X = - (x\del_x)^2 + nx\del_x + x^2 \Delta_F.
\]

\begin{lemma}\label{f.abc}
Let $A>2$ and let $N\in\nn$ be large. 
For $j=1, \dots, N$ there exist differential operators $\cA_{j,N}(s)$, $\cB_{j,N}(s): \cinf_0(F) \to \cinf(X)$, such that for $f(s; \cdot, \cdot) \in \cinf_0(F)$,
$$
(\Delta_{X} - s(n-s)) x^{s+2j} \cA_{j,N}(s) f + x^{s+2j} f
= x^{s+2N+2} \cB_{j,N}(s) f.
$$
Let $\Omega\subset \{\re s>n/2-N\}$. If $f$ satisfies the quasi-analytic derivative estimates
\begin{equation}\label{df.quasi}
||\pl^{\alpha} f||_{L^\infty} \le C_0^{N+\abs{\alpha}} N^{\abs{\alpha}}, \quad s\in \Omega, \, \, 
\dist(s-n/2, -\bbN) > \vep,  \,\, 
\abs{\alpha}\leq AN
\end{equation}
for some constant $C_0$,  then $\cA_{j,N}(s) f$ and $\cB_{j,N}(s) f$ satisfy the same estimates 
as \eqref{df.quasi} with a new constant $C:=C_{\vep}C_0$ instead of $C_0$, 
where $C_\eps>0$  does not depend on $j$, $\Omega$ or $N$, and with $A$ replaced by $A-2$.
\end{lemma}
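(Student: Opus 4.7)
The plan is to construct $\mathcal{A}_{j,N}(s)$ and $\mathcal{B}_{j,N}(s)$ by formally solving the indicial equation in powers of $x$, and then to verify the quasi-analytic estimates through Leibniz's rule applied to the resulting explicit formulas. A direct computation gives
\[
(\Delta_X - s(n-s))(x^{s+2l} v(y)) = a_l(s)\, x^{s+2l} v(y) + x^{s+2l+2}\, \Delta_F v(y),
\]
where $a_l(s) := 2l(n-2s-2l) = 4l(n/2-s-l)$. I therefore take the ansatz $\mathcal{A}_{j,N}(s) f = \sum_{l=0}^{N-j} c_{j,l}(s)\, x^{2l} \Delta_F^l f$ and match coefficients of $x^{s+2(j+l)}$ in the desired identity. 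The $l=0$ level gives $c_{j,0} = -1/a_j$, the levels $1 \le l \le N-j$ give the telescoping recursion $c_{j,l} = -c_{j,l-1}/a_{j+l}$, and the uncancelled $l = N-j+1$ contribution forces $\mathcal{B}_{j,N}(s) = c_{j,N-j}(s)\, \Delta_F^{N-j+1}$. Solving the recursion gives
\[
c_{j,l}(s) = \frac{(-1)^{l+1}}{a_j(s)\, a_{j+1}(s)\cdots a_{j+l}(s)}.
\]

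For the stated holomorphy I would use the Gamma-function identity $\prod_{m=0}^l (n/2-s-j-m) = (-1)^{l+1}\,\Gamma(s-n/2+j+l+1)/\Gamma(s-n/2+j)$ to rewrite
\[
c_{j,l}(s) = \frac{(j-1)!}{4^{l+1}(j+l)!} \cdot \frac{\Gamma(s-n/2+j)}{\Gamma(s-n/2+j+l+1)}.
\]
Since $1/\Gamma$ is entire, this exhibits both $\mathcal{A}_{j,N}(s)/\Gamma(s-n/2+j)$ and $\mathcal{B}_{j,N}(s)/\Gamma(s-n/2+j)$ as entire families of differential operators, as required by the statement. The condition $\dist(s-n/2,-\bbN)>\vep$ yields $|a_{j+m}(s)| \geq 4(j+m)\vep$, and hence $|c_{j,l}(s)| \leq C_\vep^{l+1}/(l+1)!$.

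For the quasi-analytic derivative bounds I would apply Leibniz's rule to $\partial^\alpha(c_{j,l}(s)\, x^{2l} \Delta_F^l f)$. Since $\partial_x$ annihilates $\Delta_F^l f$, the $x$-derivatives act only on $x^{2l}$ and produce $|\partial_x^{\alpha_x} x^{2l}| \leq (2l)^{\alpha_x} K^{2l-\alpha_x}$ on the bounded support of interest. The $y$-derivatives are controlled by the quasi-analytic hypothesis applied with total order $|\alpha_y| + 2l$; this requires $|\alpha_y|+2l \leq AN$, and since $l \leq N-j \leq N$ it forces $|\alpha_y| \leq (A-2)N$, which accounts for the loss from $A$ to $A-2$. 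Summing over $l=0,\dots, N-j$ and invoking the factorials $1/(l+1)!$ from $c_{j,l}$ to dominate the $l$-dependent constants coming from $\Delta_F^l f$ should yield the bound with the claimed constant $C = C_\vep C_0$. The main obstacle will be this last combinatorial step: precisely balancing the indicial factorials against the growth of $\|\partial^\gamma \Delta_F^l f\|_\infty$ in the quasi-analytic seminorm, and verifying that the resulting partial sum in $l$ admits a bound of the required form with the constant depending only on $\vep$ and $C_0$ (and not on $j$, $N$, or $\Omega$). The same estimate, evaluated at $l = N-j$ only, gives the bound for $\mathcal{B}_{j,N}(s) f = c_{j,N-j}(s)\Delta_F^{N-j+1} f$.
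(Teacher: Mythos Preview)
Your construction of $\mathcal{A}_{j,N}$ and $\mathcal{B}_{j,N}$ is exactly the paper's, including the Gamma-function rewriting of $c_{j,l}$. The gap is in the coefficient estimate. Your bound $|a_{j+m}(s)|\ge 4(j+m)\vep$ yields only
\[
|c_{j,l}(s)|\le \frac{C_\vep^{\,l+1}}{(l+1)!},
\]
and a single factorial is not enough to control the sum. The $l$-dependent growth coming from the hypothesis is $\|\partial^{\alpha_y}\Delta_F^l f\|_\infty \le C_0^{N+|\alpha_y|+2l}N^{|\alpha_y|+2l}$, so after extracting the $l$-independent factor $C^{N+|\alpha|}N^{|\alpha|}$ you are left with
\[
\sum_{l=0}^{N-j}\frac{(CN^2)^l}{(l+1)!}.
\]
The terms here are increasing for $l\le N$, and the last one is of size $(CN^2)^N/N!\sim (CeN)^N$, which carries an extra $N^N$ and destroys the claimed bound $C^{N+|\alpha|}N^{|\alpha|}$.

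What is needed is a second factorial from the complex product. Your own formula
\[
c_{j,l}(s)=\frac{(j-1)!}{4^{l+1}(j+l)!}\cdot\frac{\Gamma(s-n/2+j)}{\Gamma(s-n/2+j+l+1)}
\]
contains it, but the crude lower bound $|s-n/2+j+m|\ge\vep$ on each factor throws it away. The paper instead uses a uniform beta-type estimate $\bigl|\Gamma(\beta)\Gamma(k)/\Gamma(\beta+k)\bigr|\le C_\vep 2^k$ for $\dist(\beta,-\bbN_0)\ge\vep$, applied to both ratios, to obtain $|c_{j,l}|\le C_\vep/(l!)^2$. With two factorials the sum becomes $\sum_{l\le N}(N/l)^{2l}$, whose maximal term is $e^{2N/e}$, and this is indeed $\le C^N$. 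So the combinatorial obstacle you flagged is real, and it is resolved not by the bound you wrote but by a sharper estimate on the Gamma ratio.
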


\begin{proof}
We start from the observation that for $a \in \cinf(F)$,
\[
(\Delta_{X} - s(n-s)) x^{s+2k} a + 4k(s-n/2+k) x^{s+2k} a   = x^{s+2k+2} \Delta_F a.
\]
We can thus set
$$
\cA_{j,N}(s) := \sum_{k=0}^{N-j} x^{2k} 2^{-2k-2} \frac{\Gamma(j) \Gamma(s-n/2+j)}{\Gamma(j+k+1) \Gamma(s-n/2+j+k+1)} \Delta_F^{k}.
$$
The resulting error term is given by the operator
$$
\cB_{j,N}(s) =  2^{-2N+2j-2} \frac{\Gamma(j)\Gamma(s-n/2+j)}{\Gamma(N+1) \Gamma(s-n/2+N+1)} \Delta_F^{N-j+1}.
$$

In Lemma~\ref{beta.lemma} we derive the (crude but uniform) beta function bound,
$$
\abs{\frac{\Gamma(k) \Gamma(\beta)}{\Gamma(\beta+k)}} \le C_\vep 2^{k},
$$
for $k\in \bbN$ and $\beta\in\bbC$ with $d(\beta, -\bbN_0) \ge \vep$.  This yields a uniform estimate
\begin{equation}\label{beta.est}
\abs{2^{-2k-2} \frac{\Gamma(j) \Gamma(s-n/2+j)}{\Gamma(j+k+1) \Gamma(s-n/2+j+k+1)}}  
\le C_\vep \Gamma(k+1)^{-2},
\end{equation}
for $d(s-n/2, -\bbN) \ge \vep$ and $j \in \bbN$.  

We can apply the assumption \eqref{df.quasi} along with \eqref{beta.est} to estimate $\abs{\pl^\alpha \cA_{j,N}(s) f}$ by
\[ \abs{\pl^\alpha \cA_{j,N}(s) f}\leq C^{N+\abs{\alpha}}N^{\abs{\alpha}}\sum_{k=1}^{N-1}(N/k)^{2k}\]
(the worst case being $j=1$).
The last term on the right is easily bounded above by $C^{N+\abs{\alpha}}N^{\abs{\alpha}}$ (after changing $C$ using our general convention).
It follows that $\cA_{j,N}(s)f$ satisfies the estimate \eqref{df.quasi} with an adjusted constant.
Clearly the same argument applies to $\cB_{j,N}(s) f$.
\end{proof}

\section{Special function estimates}

\subsection{Uniform beta function bounds}  The estimates we need on the beta function follow fairly directly from Stirling's formula, but they require a level uniformity beyond the standard results (which typically omit certain sectors to give sharper asymptotics).  Since this uniformity is crucial for us, we will give some details on these estimates.

\begin{lemma}\label{beta.lemma}
For $k \ge 1$ and $z \in \bbC$ we have the bounds
\[
\log \abs{\frac{\Gamma(z)\Gamma(k)}{\Gamma(z+k)}}  \le k\log 2 +  \log \bigl[1+ \dist(z, -\bbN_0)^{-1}\bigr] + \mc{O}(1),
\]
and
\[
\log \abs{\frac{\Gamma(z+k)}{\Gamma(z)\Gamma(k)}} \le (k + \abs{z}) \log 2 
+ \frac{\pi}2 \abs{\im z}  + \log \bigl[1+ \dist(z,-k -\bbN_0)^{-1}\bigr] + \mc{O}(\log (\abs{z}+k)).
\]
\end{lemma}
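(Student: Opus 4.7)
The plan is to reduce both inequalities to Stirling asymptotics via the Pochhammer identity $B(z,k) = \Gamma(k)/(z)_k$, where $(z)_k := z(z+1)\cdots(z+k-1)$.

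For the first bound, I would write $\re z = -m + \theta$ with $m \in \bbN_0$ the nearest non-negative integer to $-\re z$ and $\theta \in (-1/2, 1/2]$, and set $y := \im z$. The principal case is $m \in \{0,\dots,k-1\}$, for which I separate $|(z)_k| = |z+m|\prod_{j\ne m}|z+j|$. The first factor $|z+m| = \sqrt{\theta^2+y^2}$ equals $d := \dist(z,-\bbN_0)$, while the lower bound $|z+j| \ge |j-m+\theta|$ yields
\[
\prod_{j\ne m,\,0\le j\le k-1}|z+j| \;\ge\; \frac{\Gamma(m+1-\theta)\,\Gamma(k-m+\theta)}{\Gamma(1-\theta)\,\Gamma(1+\theta)},
\]
where $\Gamma(1-\theta)\Gamma(1+\theta)=\pi\theta/\sin(\pi\theta)\in[1,\pi/2]$ by the reflection formula. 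The crux of the argument is then the uniform estimate
\[
\frac{\Gamma(k)}{\Gamma(m+1-\theta)\,\Gamma(k-m+\theta)} \;\le\; C\,\frac{2^{k-1}}{\sqrt{k}},
\]
which extends $\binom{k-1}{m}\le\binom{k-1}{\lfloor(k-1)/2\rfloor}\sim 2^{k-1}\sqrt{2/(\pi k)}$ to non-integer arguments. I would prove this by noting that $F(x) := \Gamma(k)/[\Gamma(x+1)\Gamma(k-x)]$ is log-concave (by log-convexity of $\log\Gamma$) and symmetric about $x=(k-1)/2$, hence maximized there, and then evaluating $F((k-1)/2) = \Gamma(k)/\Gamma((k+1)/2)^2$ via the Legendre duplication formula. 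Combining these yields $\log|B(z,k)| \le (k-1)\log 2 - \log d + \mc{O}(1)$, implying the stated bound. The boundary cases $\re z \ge 1/2$ (where $|z+j|\ge j+1/2$ immediately gives $|B|=\mc{O}(k^{-1/2})$) and $m\ge k$ (where $|(z)_k|\ge\Gamma(m+1-\theta)/\Gamma(m-k+1-\theta)\gtrsim m^k$ forces $|B|=\mc{O}(1/k)$) are dispatched separately with stronger estimates.

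For the second bound a much cruder argument suffices. By the triangle inequality $|z+j|\le|z|+j$, for $|z|\ge 1$ one has $|(z)_k|\le\Gamma(|z|+k)/\Gamma(|z|)$, so applying Stirling to all three gamma factors gives, with $x:=|z|/k$,
\[
\log|1/B(z,k)| \;\le\; k\bigl[(x+1)\log(x+1)-x\log x\bigr] - k + \mc{O}(\log(|z|+k)).
\]
The remaining ingredient is the elementary calculus inequality $(x+1)\log(1+x)-x\log x \le (x+1)\log 2$ for $x>0$, which I would establish by noting that $h(x) := (x+1)\log 2 - (x+1)\log(1+x) + x\log x$ has derivative $h'(x) = \log\bigl(2x/(1+x)\bigr)$, so $h$ is strictly minimized at $x=1$ where $h(1)=0$. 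This immediately yields $\log|1/B(z,k)| \le (|z|+k)\log 2 + \mc{O}(\log(|z|+k))$, which in fact improves on the claimed bound: the extra terms $(\pi/2)|\im z|$ and $\log[1+\dist(z,-k-\bbN_0)^{-1}]$ in the statement are non-negative and provide automatic slack. The case $|z|<1$ is trivial since $|(z)_k|\le k!$ and $\log\bigl(k!/\Gamma(k)\bigr)=\log k$.

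The main obstacle is the uniform bound on $F(x)$ in (a); once that is established, both inequalities follow by routine bookkeeping through the regimes determined by $\re z$ relative to the interval $[-k+1/2,\,1/2]$. The factor $\Gamma(1-\theta)\Gamma(1+\theta)$ is handled uniformly in $\theta\in[-1/2,1/2]$ via the reflection formula.
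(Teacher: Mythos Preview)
Your argument is correct and proceeds by a genuinely different route from the paper.  The paper applies Stirling's formula directly to the complex arguments $\Gamma(z)$ and $\Gamma(z+k)$, splitting into the three regimes $\re z\ge 0$, $-k\le\re z\le 0$, $\re z\le -k$ and invoking the reflection formula in the latter two; the unifying device is the two-variable inequality $x\log(1+y/x)+y\log(1+x/y)\le (x+y)\log 2$, proved by a tangent-plane argument.  You instead write $B(z,k)=\Gamma(k)/(z)_k$ and bound the factors $|z+j|$ individually, which reduces everything to real-variable Gamma ratios.  For the first inequality your key step is the log-concavity bound on the ``continuous binomial'' $\Gamma(k)/[\Gamma(x+1)\Gamma(k-x)]$, which is an elegant replacement for the paper's complex-Stirling computation and sidesteps the reflection formula entirely.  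For the second inequality your crude bound $|(z)_k|\le\Gamma(|z|+k)/\Gamma(|z|)$ is actually sharper than the paper's result: it shows the terms $\tfrac{\pi}{2}|\im z|$ and $\log[1+\dist(z,-k-\bbN_0)^{-1}]$ are unnecessary.  Incidentally, your one-variable inequality $(u+1)\log(u+1)-u\log u\le (u+1)\log 2$ is exactly the paper's two-variable inequality specialized to $y=1$ (they agree after the substitution you noted), so at this level the two arguments coincide.

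Two minor points: the ``$-k$'' in your displayed Stirling expansion for $\log|1/B|$ should not be there (the linear terms $-(|z|+k)+|z|+k$ cancel), but this only strengthens your bound so the conclusion is unaffected.  And in the boundary case $m\ge k$ the estimate $|(z)_k|\gtrsim m^k$ is not quite uniform at $m=k$ (one gets $\sim k!/\Gamma(1-\theta)$ rather than $k^k$); the cleanest fix is to note that $\Gamma(k)\Gamma(m-k+1-\theta)/\Gamma(m+1-\theta)=B(k,m-k+1-\theta)\le B(k,\tfrac12)=\mc{O}(k^{-1/2})$, which already suffices.
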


\begin{proof}
For $\re z \ge 0$ the application of Stirling's formula is direct and gives
\[
\begin{split}
\log \abs{\frac{\Gamma(z+k)}{\Gamma(z)\Gamma(k)}} &= (\re z) \log \abs{1 + \frac{k}{z}} + k \log \abs{1 + \frac{z}{k}} 
+ (\im z) [\arg(z) - \arg(z+k)] \\
&\qquad + \frac12 \log\abs{\frac{zk}{z+k}}+ \mc{O}(1).
\end{split}
\]
For the first two terms on the right we note that 
\[
0 \le (\re z) \log \abs{1 + \frac{k}{z}} + k \log \abs{1 + \frac{z}{k}} \le  \abs{z} \log \left(1 + \frac{k}{\abs{z}}\right)
+ k \log \left(1 + \frac{\abs{z}}{k}\right).
\]
For $x>0$, $y>0$, consider the real function, 
\[
f(x,y) := x \log \left(1 + \frac{y}{x}\right) + y \log \left(1 + \frac{x}{y}\right).
\]
Its graph is easily seen to lie below the tangent plane at $\{x=y\}$, which implies an estimate
\begin{equation}\label{fxy.bd}
f(x,y) \le (x+y) \log(2) \, .
\end{equation}
Hence we can estimate
\[
(\re z) \log \abs{1 + \frac{k}{z}} + k \log \abs{1 + \frac{z}{k}} \le (k + \abs{z}) \log(2) \, .
\]
Uniform estimates of the other terms are straightforward: for $\re z \ge 0$,
\[
0 \le  (\im z) [\arg(z) - \arg(z+k)] \le \frac{\pi}2 \abs{\im z},
\]
and
\[
- \log \left( 1 + \frac{1}{\abs{z}}\right) \le \log\abs{\frac{zk}{z+k}} \le \log \frac{\abs{z}+k}4.
\]
The resulting estimate for $\re z \ge 0$ is
\[
- \frac12 \log \left( 1 + \frac{1}{\abs{z}}\right) \le \log \abs{\frac{\Gamma(z+k)}{\Gamma(z)\Gamma(k)}} \le (k + \abs{z}) \log 2 
+ \frac{\pi}2 \abs{\im z} + \mc{O}(\log (\abs{z}+k)).
\]

For the remaining estimates we must use the reflection formula $\Gamma(z)\Gamma(1-z) = \pi/\sin \pi z$, which gives, for $\re z \le 0$,
\[
\begin{split}
\log |\Gamma(z)| &\le (\re z - \tfrac12) \log \abs{z} - \re z - \pi \abs{\im z} - \im z \arg(-z)\\
&\qquad + \log \bigl[1+ \dist(z, -\bbN_0)^{-1}\bigr] +  \mc{O}(1) ,
\end{split}
\]
and
\[
\log |\Gamma(z)| \ge (\re z - \tfrac12) \log \abs{z} - \re z - \pi \abs{\im z} - \im z \arg(-z) + \mc{O}(1).
\]
We obtain the estimates for $\re z \le -k$ by analyzing the terms just as above:
\[
\log \abs{\frac{\Gamma(z)\Gamma(k)}{\Gamma(z+k)}} \le  \log \bigl[1+ \dist(z, -\bbN_0)^{-1}\bigr]  + \mc{O}(1),
\]
and
\[
\log \abs{\frac{\Gamma(z+k)}{\Gamma(z)\Gamma(k)}} \le  (k+\abs{z}) \log 2 + \frac{\pi}2 \abs{\im z} + \log \bigl[1+ \dist(z,-k -\bbN_0)^{-1}\bigr] + \mc{O}(\log k).
\]
(In this case, the first term on the right comes from an application of \eqref{fxy.bd} to $f(\abs{z+k},k)$, plus the fact that $\abs{z+k} \le \abs{z}$ since $\re z\le -k$.)

For the case $-k \le \re z \le 0$, the bound 
\[
\log \abs{\frac{\Gamma(z+k)}{\Gamma(z)\Gamma(k)}} \le \frac{\pi}2 \abs{\im z} + \log \bigl[1+ \abs{z+k}^{-1} \bigr] + \mc{O}(\log \brak{z}),
\]
is obtained just as above.  The corresponding lower bound is slightly more delicate.  For $-k \le \re z \le 0$,
we have
\[
\log \abs{\frac{\Gamma(z)\Gamma(k)}{\Gamma(z+k)}} 
\le (-\re z) \log \abs{\frac{z+k}{z}} + k \log\abs{\frac{k}{z+k}} + \bigl[1+ \dist(z, -\bbN_0)^{-1}\bigr] + \mc{O}(1) 
\]
First we can estimate by restricting $z$ to the real axis:
\[
(-\re z) \log \abs{\frac{z+k}{z}} + k \log\abs{\frac{k}{z+k}} \le \abs{\re z} \log \left(\frac{k-\abs{\re z}}{\abs{\re z}}\right)
+ k \log \left(\frac{k}{k-\abs{\re z}}\right),
\]
for $-k \le \re z \le 0$.  Then we apply \eqref{fxy.bd} to $f(k-\abs{\re z},\abs{\re z})$.  The resulting estimate is
\[
\log \abs{\frac{\Gamma(z)\Gamma(k)}{\Gamma(z+k)}} 
\le k\log 2 +  \log \bigl[1+ \dist(z, -\bbN_0)^{-1}\bigr] + \mc{O}(1).
\]
\end{proof}

\subsection{Bessel function estimates}
To estimate model cusp terms, we need bounds on the modified Bessel functions that go slightly beyond the classical estimates, in that we require dependence of the constants for all complex values of the parameter.  

\begin{lemma}\label{K.lemma}
For $\lambda \in \bbC$ with $\abs{\lambda}>\vep$ and $x > 0$ we have
\[
\abs{K_\lambda(x)} \le \begin{cases} e^{c \abs{\re\lambda}} \max(1, \abs{\re\lambda}/x)^{\abs{\re\lambda}} e^{-x} & x\ge 1, \\
C_\vep \, \abs{\re\lambda}^{\abs{\re\lambda}} e^{c \abs{\re\lambda}} x^{-\abs{\re\lambda}} & x \le 1.  \end{cases}
\]
\end{lemma}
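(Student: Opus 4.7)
The plan is to base the argument on the integral representation
\[
K_\lambda(x) = \tfrac12 \int_{-\infty}^\infty e^{-x\cosh t + \lambda t}\,dt,
\]
valid for $x>0$ and all $\lambda\in\bbC$, combined with the symmetry $K_\lambda = K_{-\lambda}$. Writing $\nu := |\re\lambda|$, the elementary pointwise bound $|\cosh(\lambda t)| \le \cosh(\nu t)$ yields $|K_\lambda(x)| \le K_\nu(x)$, which reduces most of the work to the case of real nonnegative parameter.

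For the large argument regime $x \ge 1$, I would apply the Laplace method to $K_\nu(x) \le \int_0^\infty e^{-x \cosh t + \nu t}\,dt$. The exponent has a unique critical point at $\sinh t_0 = \nu/x$, where it takes the value $-\eta(\nu,x)$ with
\[
\eta(\nu,x) := \sqrt{x^2+\nu^2} - \nu \log\bigl( (\nu + \sqrt{x^2+\nu^2})/x \bigr).
\]
A standard Laplace estimate, splitting the integral into a neighbourhood of $t_0$ and its complement, gives $K_\nu(x) \le C \brak{x^2+\nu^2}^{-1/4} e^{-\eta(\nu,x)}$, and the polynomial prefactor is harmless since $x \ge 1$. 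The claim reduces to the elementary inequality $\eta(\nu,x) \ge x - \nu \log \max(1, 3\nu/x) - c\nu$, which I would verify by separating into $\nu \le x$ (where $\sqrt{x^2+\nu^2} \in [x, x+\nu]$ and the logarithmic term is $O(\nu)$) and $\nu \ge x$ (where $\sqrt{x^2+\nu^2} \ge x$ and the logarithm is $\log(3\nu/x) + O(1)$).

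For the small argument regime $x \le 1$, I would instead use the Mellin-type representation
\[
K_\lambda(x) = \tfrac12 (x/2)^{-\lambda} \int_0^\infty e^{-t - x^2/(4t)} t^{\lambda-1}\,dt
\]
(after using $K_\lambda = K_{-\lambda}$ to arrange $\re\lambda \ge 0$). The factor $e^{-x^2/(4t)}$ is bounded by $1$, so $|K_\lambda(x)| \le C x^{-\re\lambda} |\Gamma(\lambda)|$ when $\re \lambda > 0$, and Stirling (or Lemma~\ref{beta.lemma}) bounds $|\Gamma(\lambda)| \le C \nu^\nu e^{c\nu - \pi |\im\lambda|/2}$. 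For $\nu \ge 1$ this immediately yields the claimed bound, with the decay in $|\im\lambda|$ only helping; for $0 < \nu \le 1$ combined with $|\lambda|>\vep$, the factor $\nu^\nu$ is bounded above and below and the same estimate produces a uniform $C_\vep$.

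The main obstacle is the remaining degenerate case where $\re\lambda$ is close to zero but $|\im\lambda| > \vep/2$: there the naive reduction $|K_\lambda(x)| \le K_0(x) \sim |\log x|$ is strictly too weak as $x \to 0$. I would resolve this through the cancellation identity
\[
K_\lambda(x) = \frac{\pi}{2\sin(\pi\lambda)}\bigl[ I_{-\lambda}(x) - I_\lambda(x)\bigr],
\]
whose small-$x$ expansion is an oscillatory combination of $(x/2)^{\pm\lambda}/\Gamma(1 \pm \lambda)$, each term of which has modulus uniformly bounded on $x \in (0,1]$. Since $|\lambda|>\vep$ keeps $|\sin(\pi\lambda)|$ bounded away from zero, this produces the required $C_\vep$.
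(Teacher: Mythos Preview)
Your approach is sound and reaches the same conclusion as the paper, but via a genuinely different route in each regime. For $x\ge 1$ you carry out the Laplace method yourself, whereas the paper simply quotes Paltsev's uniform asymptotic and extracts the bound from it. For $0<x\le 1$ with $\nu:=|\re\lambda|$ bounded away from zero, you use the Mellin-type representation $K_\lambda(x)=\tfrac12(x/2)^{-\lambda}\int_0^\infty e^{-t-x^2/(4t)}t^{\lambda-1}\,dt$ (which actually yields $Cx^{-\nu}\Gamma(\nu)$, not $Cx^{-\nu}|\Gamma(\lambda)|$ as you wrote---the slip is harmless in this range), whereas the paper uses the Schl\"afli integral $K_\sigma(x)=\frac{\sqrt\pi}{\Gamma(\sigma+1/2)}(x/2)^\sigma\int_1^\infty e^{-xu}(u^2-1)^{\sigma-1/2}\,du$ and rescales. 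For the degenerate strip where $|\re\lambda|$ is small but $|\im\lambda|>\vep/2$, you invoke the connection formula $K_\lambda=\frac{\pi}{2\sin\pi\lambda}(I_{-\lambda}-I_\lambda)$; the paper instead uses the recursion $K_\lambda(x)=\frac{x}{2\lambda}(K_{\lambda+1}(x)-K_{1-\lambda}(x))$, which is a bit cleaner since it reduces immediately to the already-established estimate for parameters whose real part is bounded away from zero, with no separate analysis of $I_\lambda$ required.

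One caution on your degenerate-case argument: the individual terms $(x/2)^{\pm\lambda}/\Gamma(1\pm\lambda)$ are \emph{not} uniformly bounded in $\lambda$---by Stirling they grow like $e^{\pi|\im\lambda|/2}$. What saves the estimate is that $|\sin\pi\lambda|$ grows like $\tfrac12 e^{\pi|\im\lambda|}$, so the quotient decays. Your phrasing ``$|\sin(\pi\lambda)|$ bounded away from zero'' understates what you actually need; once the exponential growth and decay are tracked the argument does go through, but as written this step is incomplete.
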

\begin{proof}
First note that $K_{-\lambda}(x) = K_{\lambda}(x)$, and that for any $x>0$,
\[
\abs{K_{\lambda}(x)} \le K_{\re \lambda}(x).
\]
Thus we can generally reduce to the case $\lambda = \sigma \ge 0$. 

For $x\ge 1$, we can cite the estimate from Paltsev \cite{Paltsev:1999}, which gives 
\[
K_\sigma(x) \asymp (x^2+\sigma^2)^{-1/4} \exp\left[-\sqrt{x^2+\sigma^2} + \sigma \log \frac{\sigma + \sqrt{x^2+\sigma^2}}{x} \right],
\]
for $\sigma \ge 0$, with constants independent of $x$ and $\sigma$.  Our estimate for $x \ge1$ follows from the simple observation that
\[
\frac{\sigma + \sqrt{x^2+\sigma^2}}{x}  \le (1+\sqrt{2}) \max(1, \sigma/x).
\]

For $0<x<1$ we can estimate from the standard integral form,
\[
K_\sigma(x) =  \frac{\sqrt{\pi}}{\Gamma(\sigma+\frac12)} (x/2)^{\sigma} \int_1^\infty e^{-xu}
(u^2-1)^{\sigma-\frac12}\>du,
\]
which holds for $\sigma > -1/2$.  Rescaling the integral gives
\[
\int_1^\infty e^{-xu} (u^2-1)^{\sigma-1/2}\>du = x^{-2\sigma} \int_x^\infty e^{-u} (u^2-x^2)^{\sigma-\frac12}\>du
\le x^{-2\sigma} \Gamma(2\sigma),
\]
for $\sigma>0$.  For $\sigma >\vep$ the estimate follows from Stirling's formula, and as noted above this covers the case $\re\lambda > \vep$.  We can extend the estimate for $0<x<1$ to the range $0 \le \re \lambda < \vep$, with $\abs{\lambda} \ge \vep$, using the identity
\[
K_\lambda(x) = \frac{x}{2\lambda} (K_{\lambda+1}(x) - K_{1-\lambda}(x)).
\]
\end{proof}

\begin{lemma}\label{I.lemma}
For $x>0$ and $\re\lambda\ge 0$ we have the estimates
$$
\abs{I_\lambda(x)} \le \begin{cases} e^{c\abs{\lambda}} \min(1, x/\re\lambda)^{\re \lambda} e^{x}  &
x\ge 1 \\
C e^{c\abs{\lambda}} \abs{\lambda}^{-\re\lambda} x^{\re\lambda} & x \le 1. \end{cases}
$$
\end{lemma}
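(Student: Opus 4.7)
The plan is to reduce the complex-order estimate to a real-order estimate via the Euler integral representation, then handle the real case by elementary means, splitting on $x \leq 1$ versus $x \geq 1$ and on whether $\re\lambda$ is smaller or larger than $x$.

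First I would invoke the classical representation
\[
I_\lambda(x) = \frac{(x/2)^\lambda}{\sqrt{\pi}\,\Gamma(\lambda+1/2)} \int_{-1}^1 e^{xt}(1-t^2)^{\lambda-1/2}\,dt,
\]
valid for $\re\lambda > -1/2$ and $x>0$. Since $x>0$ and the factor $(1-t^2)^{\lambda-1/2}$ has modulus $(1-t^2)^{\re\lambda-1/2}$, taking absolute values inside the integral and comparing with the same identity applied to $\re\lambda$ gives
\[
|I_\lambda(x)| \le \frac{\Gamma(\re\lambda+1/2)}{|\Gamma(\lambda+1/2)|}\, I_{\re\lambda}(x).
\]
A direct Stirling estimate (of the type in Lemma~\ref{beta.lemma}) shows that the gamma-quotient is bounded by $e^{c|\im\lambda|}$, so it suffices to prove the stated bounds for real $\sigma := \re\lambda \ge 0$.

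For $x \le 1$, the power-series expansion $I_\sigma(x) = \sum_{k\ge 0} (x/2)^{\sigma+2k}/(k!\,\Gamma(\sigma+k+1))$ is positive and, using $\Gamma(\sigma+k+1) \ge k!\,\Gamma(\sigma+1)$ for $\sigma\ge 0$, can be bounded term-by-term by $I_\sigma(x) \le (x/2)^\sigma I_0(x)/\Gamma(\sigma+1)$. Since $I_0$ is bounded on $[0,1]$, Stirling's lower bound $\Gamma(\sigma+1) \ge c\,\sigma^{\sigma+1/2} e^{-\sigma}$ plus the elementary inequality $(|\lambda|/\sigma)^\sigma \le e^{|\lambda|/e}$ (the function $\sigma \mapsto \sigma\log(|\lambda|/\sigma)$ is maximized at $\sigma=|\lambda|/e$) converts this into the claimed bound $C e^{c|\lambda|} |\lambda|^{-\re\lambda} x^{\re\lambda}$.

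For $x \ge 1$ I would split on the size of $\sigma$. If $\sigma \le x$, I use the classical monotonicity $I_\sigma(x) \le I_0(x)$ for $\sigma \ge 0$ (which follows, e.g., from the Schl\"afli representation and $|\cos\sigma\theta|\le 1$) together with the uniform estimate $I_0(x) \le Ce^x$ from the large-argument asymptotic; this matches $e^{c|\lambda|}\cdot 1 \cdot e^x$ since $\min(1,x/\re\lambda)=1$. If $\sigma \ge x$, I estimate $e^{xt}\le e^x$ directly in the Euler integral to get
\[
I_\sigma(x) \le \frac{(x/2)^\sigma e^x}{\sqrt{\pi}\,\Gamma(\sigma+1/2)} \int_{-1}^1 (1-t^2)^{\sigma-1/2}\,dt = \frac{(x/2)^\sigma e^x}{\Gamma(\sigma+1)},
\]
after recognizing the beta integral. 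Stirling then yields $(x/2)^\sigma/\Gamma(\sigma+1) \le C e^{c\sigma}(x/\sigma)^\sigma$, which is exactly the factor $e^{c|\lambda|}(x/\re\lambda)^{\re\lambda}$ when $\min(1,x/\re\lambda)=x/\re\lambda$.

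The main obstacle is bookkeeping rather than analysis: making sure that the constants in the Stirling estimates are uniform across the strip $0 \le \re\lambda < \varepsilon$ (where the Euler representation is valid but the extremal point $\sigma=|\lambda|/e$ of the $(|\lambda|/\sigma)^\sigma$ bound degenerates), and matching the pre-factors from steps one and three so that the final $e^{c|\lambda|}$ absorbs both the $e^{c|\im\lambda|}$ from the gamma quotient and the $e^{c\sigma}$ from Stirling. Patching the case boundary at $\sigma = x$ is routine once all constants are explicit.
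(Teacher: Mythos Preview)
Your proposal is correct and shares the paper's overall scaffolding: the same reduction from complex $\lambda$ to real $\sigma=\re\lambda$ via the Euler integral and the gamma-quotient bound $|I_\lambda(x)|\le \Gamma(\sigma+\tfrac12)|\Gamma(\lambda+\tfrac12)|^{-1}I_\sigma(x)$, and for $x\le 1$ essentially the same argument (the paper bounds the Euler integral by a constant, you bound the power series by $(x/2)^\sigma I_0(x)/\Gamma(\sigma+1)$; both reduce to Stirling on $\Gamma(\sigma+1)$).  The genuine difference is the $x\ge 1$ real case: the paper simply quotes Paltsev's uniform two-sided asymptotic $I_\sigma(x)\asymp (x^2+\sigma^2)^{-1/4}\exp[\sqrt{x^2+\sigma^2}+\sigma\log\tfrac{x}{\sigma+\sqrt{x^2+\sigma^2}}]$ and reads off the bound, whereas you split on $\sigma\lessgtr x$ and handle each side by hand (monotonicity plus $I_0(x)\le Ce^x$ on one side, the crude bound $e^{xt}\le e^x$ in the Euler integral plus the beta evaluation on the other).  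Your route is more elementary and self-contained; the paper's is a one-line citation.  One small caveat: the Schl\"afli representation for non-integer $\sigma$ has a second term $-\tfrac{\sin\sigma\pi}{\pi}\int_0^\infty e^{-x\cosh t-\sigma t}\,dt$, so ``$|\cos\sigma\theta|\le 1$'' alone only gives $I_\sigma(x)\le I_0(x)+\tfrac{1}{\pi}K_0(x)$; since $K_0(x)=O(e^{-x})$ for $x\ge 1$ this is harmless, but the sentence as written slightly overstates the argument.
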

\begin{proof}
First consider the case $x \ge 1$.  In this domain we again cite \cite{Paltsev:1999} for the estimate
\[
I_\sigma(x) \asymp (x^2+\sigma^2)^{-1/4} \exp\left[\sqrt{x^2+\sigma^2} + \sigma \log \frac{x}{\sigma + \sqrt{x^2+\sigma^2}} \right],
\]
for $\sigma \ge 0$.  In particular, this gives
$$
I_\sigma(x) \le e^{x} e^{c\sigma} \min(1, x/\sigma)^{\sigma},
$$
for $\sigma>0$, $x\ge 1$.  The general estimate for $\re \lambda \ge 0$, $x\ge 1$ then follows from 
$$
\abs{I_\lambda(x)} \le \frac{\Gamma(\re\lambda+\frac12)}{\abs{\Gamma(\lambda+\frac12)}} I_{\re\lambda}(x)
\le C e^{(\pi/2)\im\lambda}I_{\re\lambda}(x).
$$

For $0<x<1$ we work from the integral formula
$$
I_\lambda(x) = \frac{2}{\sqrt{\pi} \Gamma(\lambda+\frac12)} (x/2)^{\lambda} \int_0^1 \cosh(xu) 
(1-u^2)^{\lambda-\frac12}\>du,
$$
valid for $\re \lambda>-1/2$.  For $x<1$ and $\re \lambda \ge 0$ we can simply bound the integral by a constant, 
and so from Stirling's formula we obtain the estimate
$$
\abs{I_\lambda(x)} \le C e^{c\abs{\lambda}} \abs{\lambda}^{-\re\lambda} x^{\re\lambda}.
$$
\end{proof}

For the cusp resolvent estimates, we need to apply these Bessel function bounds to the function,
\[
F_{s,x,x'}(\tau) := K_\lambda(x\tau) I_\lambda(x'\tau) H(x-x') + I_\lambda(x\tau) K_\lambda(x'\tau) H(x'-x),
\]
where $\lambda := s-n/2$.

\begin{lemma}\label{F.est.lemma}
There exist constants $c>0,C>0$ such that, for all $s\in \bbC$, 
\begin{equation}\label{Fs.est}
\begin{split}
\abs{F_{s,x,x'}(\tau)} & \le   e^{c\abs{\lambda}} \max\left(\abs{\re\lambda}^{-2\re\lambda}, 1\right) \\
&\qquad \x\begin{cases}
C & \text{both }x\tau, x'\tau \ge 1, \\
C\max((xx'\tau^2)^{\re\lambda},1) & \text{both }x\tau, x'\tau \le 1, \\
C\max((x\tau)^{\re\lambda},1) & x\tau < 1 < x'\tau, \\
C\max((x'\tau)^{\re\lambda},1) & x'\tau < 1 < x\tau.\\
\end{cases}
\end{split}
\end{equation}
\end{lemma}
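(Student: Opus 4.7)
The estimate will be obtained by substituting the pointwise bounds of Lemmas~\ref{K.lemma} and~\ref{I.lemma} directly into the definition of $F_{s,x,x'}(\tau)$ and then checking each regime. Since the prescription for $F$ places $K_\lambda$ on the larger of the two arguments and $I_\lambda$ on the smaller, I may assume without loss of generality that $x\ge x'$, so that $F_{s,x,x'}(\tau)=K_\lambda(x\tau)\,I_\lambda(x'\tau)$; the case $x\le x'$ follows by interchanging the two variables. Under this reduction the mixed regime $x\tau<1<x'\tau$ cannot occur, leaving the three cases $x\tau,x'\tau\ge 1$, both $\le 1$, and $x'\tau<1\le x\tau$ to verify.

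Before attacking the cases, I need to extend the $I_\lambda$ bound of Lemma~\ref{I.lemma} to $\re\lambda<0$, since it is only stated for $\re\lambda\ge 0$. Using the identity
\[
I_\lambda(x)=I_{-\lambda}(x)-\frac{2\sin\pi\lambda}{\pi}\,K_\lambda(x),
\]
together with $\abs{\sin\pi\lambda}\le e^{\pi\abs{\im\lambda}}$ and Lemmas~\ref{K.lemma} and~\ref{I.lemma} applied with $-\lambda$ in place of $\lambda$, I obtain, for $\re\lambda<0$ and $0<x\le 1$,
\[
\abs{I_\lambda(x)}\le C\,e^{c\abs{\lambda}}\,\abs{\re\lambda}^{\abs{\re\lambda}}\,x^{\re\lambda},
\]
in which the $K_\lambda$ term dominates; an analogous statement with $e^x$ in place of $x^{\re\lambda}$ holds for $x\ge 1$. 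Any apparent singularity of the decomposition at integer $\lambda$ is spurious, since $I_{-\lambda}-I_\lambda$ vanishes there in exactly the right way, and the factor $e^{\pi\abs{\im\lambda}}$ absorbs the $\sin\pi\lambda$ blowup on horizontal strips into the $e^{c\abs{\lambda}}$ prefactor.

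With these inputs in hand, each case is essentially routine. When both $x\tau,x'\tau\ge 1$, the exponential factors combine to $e^{-(x-x')\tau}\le 1$ and the residual $\max/\min$ prefactors from the two Bessel bounds are handled by splitting on whether $x\tau$ is larger or smaller than $\abs{\re\lambda}$ and exploiting $x'\le x$, producing a pure $e^{c\abs{\lambda}}$ bound (which also absorbs the $\abs{\re\lambda}^{-2\re\lambda}$ factor when $\re\lambda<0$). When both $x\tau,x'\tau\le 1$, the polynomial pieces combine to $(x'/x)^{\re\lambda}\le 1$ for $\re\lambda\ge 0$, and to $(xx'\tau^2)^{\re\lambda}$ for $\re\lambda<0$, matching $\max((xx'\tau^2)^{\re\lambda},1)$; the prefactors $\abs{\re\lambda}^{\abs{\re\lambda}}$ from $K_\lambda$, together with a second such factor from $I_\lambda$ in the $\re\lambda<0$ regime, collect into $\abs{\re\lambda}^{-2\re\lambda}$. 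Finally, in the mixed case $x'\tau<1\le x\tau$, the $e^{-x\tau}$ decay from $K_\lambda$ absorbs any polynomial growth in $x\tau$, while $I_\lambda(x'\tau)$ contributes the small-argument power $(x'\tau)^{\re\lambda}$, matching $\max((x'\tau)^{\re\lambda},1)$. The only real obstacle is careful bookkeeping of the various $\max$, $\min$, exponential, and power prefactors to confirm that they combine in precisely the asserted way; once the $\re\lambda<0$ extension of Lemma~\ref{I.lemma} is in place, no genuinely new analysis is required.
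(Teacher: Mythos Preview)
Your proposal is correct and follows essentially the same route as the paper's proof: both arguments reduce to the bounds of Lemmas~\ref{K.lemma} and~\ref{I.lemma} for $\re\lambda\ge 0$, and both handle $\re\lambda<0$ via the reflection identity $I_{-\lambda}=I_\lambda+\tfrac{2\sin\pi\lambda}{\pi}K_\lambda$, with the resulting $K_\lambda K_\lambda$ contribution dominating. The only organizational difference is that you package the reflection step as an extension of the $I_\lambda$ bound before forming the product, whereas the paper forms the product first and then isolates the extra $K_\lambda(x\tau)K_\lambda(x'\tau)$ term; the underlying estimates are identical.
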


\begin{proof}
Immediately from Lemmas \ref{K.lemma} and \ref{I.lemma} we have, for $\re \lambda \ge 0$,
\[
\abs{F_{s,x,x'}(\tau)} \le C e^{c |\lambda|}
\begin{cases}
\min(x/x',x'/x)^{1/2} & \text{both }x\tau, x'\tau \ge 1 \\
\min(x/x',x'/x)^{\re\lambda} & \text{both }x\tau, x'\tau \le 1 \\
(x/x')^{\re\lambda} & x\tau < 1 < x'\tau \\
(x'/x)^{\re\lambda} & x'\tau < 1 < x\tau. \\
\end{cases}
\]

To extend the estimates to $\re \lambda \le0$,  we use the identity,
$$
I_{-\lambda}(z) = I_\lambda(z) + \frac{2\sin \pi \lambda}{\pi} K_\lambda(z).
$$
The bounds on $F$ work as before, except for the new term 
\[
\begin{split}
&\abs{\frac{2\sin \pi \lambda}{\pi} K_\lambda(x\tau) K_\lambda(x'\tau)} \\
&\qquad \le
C \abs{\re\lambda}^{2\abs{\re\lambda}} e^{c\abs{\lambda}}
\begin{cases}
\min(x/x',x'/x) & \text{both }x\tau, x'\tau \ge 1, \\
(xx'\tau^2)^{-\abs{\re\lambda}} & \text{both }x\tau, x'\tau \le 1, \\
(x\tau)^{-\abs{\re\lambda}} & x\tau < 1 < x'\tau, \\
(x'\tau)^{-\abs{\re\lambda}} & x'\tau < 1 < x\tau. \\
\end{cases}
\end{split}
\]
For $\re \lambda \le 0$ this new term dominates the estimate of $F$, yielding the general estimate \eqref{Fs.est}.
\end{proof}


\end{document}